\documentclass[a4paper,10pt]{article}
\usepackage[latin1]{inputenc}
\usepackage[T1]{fontenc}
\usepackage{amsmath}
\usepackage{enumerate}
\usepackage{amsfonts}
\usepackage{amstext}
\usepackage{amsthm}
\usepackage[all]{xy}
\usepackage{geometry}
\usepackage{srcltx}
\usepackage{graphics}
\usepackage{epsfig}
\usepackage{subfigure}
\usepackage{slashed}
\usepackage{color}
\usepackage{bbm}
\usepackage{amssymb}
\usepackage{sidecap}
\usepackage{srcltx}
\usepackage{comment}

\DeclareMathOperator{\sgn}{sgn}
\DeclareMathOperator{\spann}{span}
\DeclareMathOperator{\id}{id}
\DeclareMathOperator{\ind}{ind}

\DeclareMathOperator{\codim}{codim}

\DeclareMathOperator{\GL}{GL}

\DeclareMathOperator{\RE}{Re}
\DeclareMathOperator{\KO}{KO}

\DeclareMathOperator{\Mat}{Mat}

\newcommand{\R}{{\mathbb R}}
\newcommand{\Z}{{\mathbb Z}}

\newcommand{\cD}{{\mathcal D}}

\renewcommand{\d}{\,{\mathrm d}}
\newcommand{\tm}{\times}

\newcommand{\set}[1]{\left\{#1\right\}}		
\newcommand{\fall}{\;\text{ for all }}		
\newtheorem{theorem}{Theorem}[section]
\newtheorem{lemma}[theorem]{Lemma}
\newtheorem{corollary}[theorem]{Corollary}
\newtheorem{proposition}[theorem]{Proposition}
\newtheorem*{theorem*}{Theorem}

\theoremstyle{definition}
\newtheorem{definition}[theorem]{Definition}

\newtheorem*{hypothesis*}{Hypothesis}
\theoremstyle{remark}

\newcommand{\cref}[1]{Cor.~\ref{#1}}

\newcommand{\href}[1]{Hyp.~\ref{#1}}

\title{An Index Theorem in Relative K-Theory for First-Order Systems}
\author{Robert Skiba, Daniel Strzelecki and Nils Waterstraat}

\begin{document}
\date{}
\maketitle

\footnotetext[1]{{\bf 2010 Mathematics Subject Classification: Primary 47A53 ; Secondary 34C37, 47J15, 19K56, 55N15 }}

\begin{abstract}
Motivated by bifurcation of branches of homoclinic orbits of dynamical systems, we consider families of first-order equations on the real line and introduce a generalisation of previous index theorems by Pejsachowicz, and by Hu and Portaluri. The main novelties of our approach firstly concern the analytical setting, where we lift the common assumption that the equations are asymptotically hyperbolic. Secondly, we consider general compact parameter spaces instead of a single parameter, which results in a remarkably simple index formula in relative $K$-theory.
\end{abstract}

\section{Introduction}
The motivation of this paper stems from bifurcation of homoclinic solutions of general nonlinear systems of the form
\begin{equation}\label{Hamilton}
\left\{
\begin{array}{l}
\dot{u}(t)= g(\lambda,t,u(t)), \\
\lim\limits_{t\to\pm\infty} u(t)=0,
\end{array}
\right.
\end{equation}
where $g:[a,b]\times\mathbb{R}\times\mathbb{R}^d\rightarrow\mathbb{R}^d$ is continuously differentiable and $u\equiv 0$ satisfies \eqref{Hamilton} for all $\lambda\in [a,b]$. To study bifurcation from this trivial solution, the linearised systems
\begin{equation}\label{lin-Hamilton}
\left\{
\begin{array}{l}
\dot{u}(t)= A_\lambda(t)u(t), \\
\lim\limits_{t\to\pm\infty} u(t)=0
\end{array}
\right.
\end{equation}
for $A_\lambda(t):=D_ug(\lambda,t,0)$ play a crucial role as under common assumptions a non-trivial solution space of \eqref{lin-Hamilton} at some parameter $\lambda^\ast\in(a,b)$ is necessary for new solutions of \eqref{Hamilton} to emerge out of the trivial one when $\lambda$ passes $\lambda^\ast$. Finding sufficient criteria for a bifurcation from $u\equiv 0$ is a far more sophisticated problem, in particular as the classical Krasnoselskii bifurcation theorems are not applicable due to the lack of compactness.\\
Note that \eqref{lin-Hamilton} has a non-trivial solution if and only if
\[E^u_\lambda(0)\cap E^s_\lambda(0)\neq\{0\},\]
where
\begin{align}\label{stableunstable}
\begin{split}
E^s_\lambda(0)&=\{u(0)\in\mathbb{R}^d:\, \dot{u}(t)-A_\lambda(t)u(t)=0,\,u(t)\rightarrow 0, t\rightarrow\infty\}\\
E^u_\lambda(0)&=\{u(0)\in\mathbb{R}^d:\, \dot{u}(t)-A_\lambda(t)u(t)=0,\,u(t)\rightarrow 0, t\rightarrow -\infty\}
\end{split}
\end{align}
denote the stable and unstable spaces of \eqref{lin-Hamilton}. Another natural way to think about solutions of \eqref{lin-Hamilton} is to consider the kernel of the bounded linear operator
\begin{align}\label{Lintroduction}
L_\lambda:H^1(\R,\R^d)\rightarrow L^2(\R,\R^d),\quad L_\lambda u=\dot{u}-A_\lambda(\cdot)u.
\end{align}
The operators $L_\lambda$ are Fredholm of index $0$ under common assumptions, which allows to study bifurcation for \eqref{Hamilton} by the parity as introduced by Fitzpatrick and Pejsachowicz in a series of papers in the 1980s and 90s (see \cite{Memoirs} for a comprehensive introduction). The parity $\sigma(L,[a,b])$ is a $\mathbb{Z}_2$-valued additive homotopy invariant for paths $L=\{L_\lambda\}_{\lambda\in[a,b]}$ of bounded Fredholm operators of index $0$ between two Banach spaces $X$ and $Y$, which came up in the construction of an extension of the classical Leray-Schauder degree to a homotopy invariant degree for quasilinear Fredholm mappings. Roughly speaking, the parity counts the dimensions of the kernels of the operators $L_\lambda$ modulo $2$ when the parameter $\lambda$ traverses the interval $[a,b]$. In particular, a non-trivial parity shows that $L_{\lambda^\ast}$ has a non-trivial kernel for some parameter $\lambda^\ast$. Its most important application in bifurcation theory is as follows (cf. \cite{FiPejsachowiczProc}).
\begin{theorem*}
Let $X,Y$ be Banach spaces and $G:[a,b]\times X\rightarrow Y$ a $C^1$ map such that $G(\lambda,0)=0$ for all $\lambda\in[a,b]$. Assume that the derivatives $L_\lambda:=D_uG(\lambda,0):X\rightarrow Y$ are Fredholm of index $0$ for all $\lambda\in[a,b]$, as well as invertible for $\lambda\in \{a,b\}$. If the parity $\sigma(L,[a,b])$ of $L=\{L_\lambda\}_{\lambda\in[a,b]}$ is non-trivial in $\mathbb{Z}_2$, then there is a bifurcation from the trivial branch $u\equiv 0$ for the equations
\[G(\lambda,u)=0,\]
i.e., there is some $\lambda^\ast\in(a,b)$ such that in every neighbourhood of $(\lambda^\ast,0)$ in $[a,b]\times X$, there is some $(\lambda,u)$ such that $G(\lambda,u)=0$ and $u\neq 0$.
\end{theorem*}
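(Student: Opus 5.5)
We argue by contradiction, using the parity's homotopy invariance and the Leray–Schauder-type degree for Fredholm maps of index $0$ that the parity was built for. Suppose there is no bifurcation point in $(a,b)$. Since $L_a$ and $L_b$ are invertible, the implicit function theorem gives that $u=0$ is an isolated zero of $G(a,\cdot)$ and of $G(b,\cdot)$. Combined with the absence of bifurcation and a compactness argument over $[a,b]$, we obtain $\delta>0$ such that $G(\lambda,u)\neq 0$ for all $\lambda\in[a,b]$ and $0<\|u\|\le\delta$.

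We first reduce to a finite-dimensional problem. Choose a parametrix for the path, namely a continuous family of isomorphisms $P_\lambda:Y\to X$ with $P_\lambda L_\lambda=I-K_\lambda$ and $K_\lambda$ compact. Such a family exists for paths of index-$0$ Fredholm operators over the contractible space $[a,b]$. Recall that the parity is
\[
\sigma(L,[a,b])=\deg_{LS}(P_aL_a,0)\cdot\deg_{LS}(P_bL_b,0)\in\{\pm1\},
\]
the product of the signs of the Leray–Schauder degrees of the endpoints. This quantity is independent of the chosen parametrix.

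Next consider $F(\lambda,u)=P_\lambda G(\lambda,u)$. Its derivative at $u=0$ is a compact perturbation of the identity. The main obstacle is that $F$ itself need not be a compact perturbation of the identity. There are two ways to handle this. One is to restrict to a small ball and use the Fitzpatrick–Pejsachowicz degree for $C^1$ Fredholm maps, whose local index at an isolated regular zero $0$ equals the sign $\deg_{LS}(P_\lambda L_\lambda,0)$ relative to the fixed orientation transported along the path by $P$. The other is to write $F(\lambda,u)=P_\lambda L_\lambda u+P_\lambda R(\lambda,u)$ with $R=o(\|u\|)$ and apply Lyapunov–Schmidt reduction onto the finite-dimensional kernels, keeping track of orientations. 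Either way, the local degree $d(\lambda)$ of $F(\lambda,\cdot)$ on $B_\delta$ at $0$ is well defined for every $\lambda$, because there are no zeros on $\partial B_\delta$.

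By homotopy invariance of this degree over the admissible homotopy $\lambda\mapsto F(\lambda,\cdot)$ on $\overline{B_\delta}$, we get $d(a)=d(b)$. At the endpoints, linearisation gives $d(a)=\deg_{LS}(P_aL_a,0)$ and $d(b)=\deg_{LS}(P_bL_b,0)$. Hence $\sigma(L,[a,b])=d(a)d(b)=1$, meaning the parity is trivial in $\mathbb{Z}_2$. This contradicts the hypothesis, so some $\lambda^\ast\in(a,b)$ is a bifurcation point.

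The delicate step is justifying the homotopy invariance of the degree for the nonlinear Fredholm family, and in particular orientation consistency along $[a,b]$. I would handle it by citing the Fitzpatrick–Pejsachowicz degree theory \cite{Memoirs}, where invariance holds along paths whose orientation is induced by the parametrix.
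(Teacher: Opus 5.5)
The paper gives no proof of this statement. It quotes it from \cite{FiPejsachowiczProc}, both in the introduction and as Theorem~\ref{Rabier}, and uses it as a black box. In Section~\ref{section-bifurcation} the paper only computes the parity, via Theorem~\ref{thm-parityindbundle} and the index bundle.

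Your degree-theoretic argument is the standard proof and is correct in outline. Compared with the citation, it makes visible that the parity is exactly the orientation defect of the Fredholm degree along the path. The price is that all the weight falls on an oriented degree for $C^1$ Fredholm maps of index $0$ that is invariant under oriented homotopies. That is the technical heart of the matter, as the $C^1$ in the title of \cite{FiPejsachowiczProc} indicates. One way to reach such a degree through \cite{Memoirs}: near $u=0$ write $G(\lambda,u)=\bigl(\int_0^1 D_uG(\lambda,tu)\,dt\bigr)u$, which is a quasilinear Fredholm map.

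You should state two hypotheses of that degree explicitly.
\begin{itemize}
\item Shrink $\delta$ so that $D_uG(\lambda,u)\in\Phi_0(X,Y)$ on $[a,b]\times\overline{B_\delta}$. This follows from openness of $\Phi_0(X,Y)$, continuity of $D_uG$ and compactness of $[a,b]$; the statement itself only gives Fredholmness at $u=0$.
\item Shrink $\delta$ further so that the homotopy is proper on $[a,b]\times\overline{B_\delta}$. Fredholm maps are locally proper, and the zero set $[a,b]\times\{0\}$ is compact.
\end{itemize}
``No zeros on $\partial B_\delta$'' on its own is the Leray--Schauder admissibility condition, not the Fredholm one.

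Your orientation step is fine. Since $[a,b]\times\overline{B_\delta}$ is contractible, the homotopy is orientable. On $[a,b]\times\{0\}$ any orientation agrees with the parametrix one up to a single global sign, and that sign cancels in $d(a)d(b)$.

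Your Lyapunov--Schmidt alternative does not work as written. You cannot reduce onto the kernels along the whole path, because $\dim N(L_\lambda)$ jumps: it is $0$ at $a$ and $b$, and positive somewhere in between since the parity is nontrivial. The fix is to reduce onto the fixed-rank bundle $E(L,V)$ of \eqref{E-V-L}.
\begin{itemize}
\item Take $V$ as in \eqref{subspace}, $Q$ a projection onto $V$, and $n=\dim V$.
\item Choose a trivialisation $\psi$ of $E(L,V)$ over $[a,b]$ and continuous projections onto its fibres.
\item Apply the implicit function theorem to $(I-Q)G$. This gives a reduced family $\phi(\lambda,\cdot)\colon\R^n\to V$ whose small zeros correspond to the small zeros of $G(\lambda,\cdot)$, with $D_\xi\phi(\lambda,0)=L_\lambda\psi_\lambda$.
\item If there is no bifurcation, homotopy invariance of the Brouwer degree gives $\sgn\det(L_a\psi_a)=\sgn\det(L_b\psi_b)$. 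This contradicts Theorem~\ref{thm-parityindbundle}.
\end{itemize}
This version needs only the implicit function theorem and the Brouwer degree, and avoids infinite-dimensional orientation theory. It also uses exactly the objects the paper works with: the sign product above is the image of $\ind(L)$ under the isomorphism of Proposition~\ref{beta-iso}.
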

Thus, to find criteria for the existence of bifurcation for \eqref{Hamilton}, we are left with the problem to compute the parity $\sigma(L,[a,b])$ of the path of operators $L$ in \eqref{Lintroduction}. Pejsachowicz considered this question in \cite{JacoboAMS} by using a link between the parity and the (Atiyah-J\"anich-)index bundle for closed paths of bounded Fredholm operators of index $0$ that was pointed out by Fitzpatrick and Pejsachowicz in \cite{FiPejsachowiczII}.\\
In general, the index bundle is an element of the reduced $\KO$-theory group $\widetilde{\KO}(\Lambda)$, which stems from algebraic topology and is made by vector bundles over the compact topological space $\Lambda$. Atiyah and J\"anich showed (cf. \cite{KTheoryAtiyah}, \cite{Jaenich}) that any family of Fredholm operators parameterised by $\Lambda$ yields an element of $\widetilde{\KO}(\Lambda)$ which formally has several properties of the classical integral Fredholm index when addition in $\mathbb{Z}$ is replaced by the group operation in $\widetilde{\KO}(\Lambda)$. In particular, it is invariant under homotopies of the family of Fredholm operators, and on Hilbert spaces the homotopy class of the operator family is uniquely determined by this element in $\widetilde{\KO}(\Lambda)$.\\
For $\Lambda=S^1$ a family actually is a closed path and moreover $\widetilde{\KO}(S^1)\cong\mathbb{Z}_2$. Thus the parity and the index bundle are two ways to assign an element in $\mathbb{Z}_2$ to closed paths of Fredholm operators of index $0$, and Fitzpatrick and Pejsachowicz showed in \cite{FiPejsachowiczII} that these numbers coincide. In summary, if we consider \eqref{Hamilton} under the additional assumption that $A_a(t)=A_b(t)$ for all $t\in\mathbb{R}$ in \eqref{lin-Hamilton}, then the index bundle of Atiyah and J\"anich can be used as a bifurcation invariant.\\ To make sure that the operators $L_\lambda$ in \eqref{Lintroduction} are Fredholm of index $0$, Pejsachowicz assumed in \cite{JacoboAMS} that the limits
\begin{align}\label{limitshyp}
A_\lambda(\pm\infty)=\lim_{t\rightarrow\pm\infty} A_\lambda(t)
\end{align}
exist uniformly in $\lambda\in S^1$ and are hyperbolic, i.e., their spectra are disjoint to the imaginary axis. Moreover, there is supposed to be some $\lambda_0\in S^1$ such that there is a direct sum decomposition $E^u_{\lambda_0}(0)\oplus E^s_{\lambda_0}(0)=\mathbb{R}^d$, which implies that $L_{\lambda_0}$ is invertible. The main theorem of \cite{JacoboAMS} shows that the index bundle of $L=\{L_\lambda\}_{\lambda\in S^1}$ is non-trivial, and hence there is a bifurcation of \eqref{Hamilton}, when two vector bundles $E^s(+\infty)$ and $E^s(-\infty)$ over $S^1$ are not isomorphic. The fibres of these bundles are given by
\begin{align}\label{stableinf}
E^{s}_\lambda(\pm\infty)=\{u(0)\in\mathbb{R}^d: \dot{u}(t)-A_\lambda(\pm\infty)u(t)=0,\,\lim_{t\rightarrow\infty}u(t)=0\},
\end{align}
which are the generalised eigenspaces of $A_\lambda(\pm\infty)$ with respect to eigenvalues having negative real parts.\\
This remarkable link between classical analysis and algebraic topology has one major shortcoming: Computing the parity by the index bundle is only possible as $S^1$ is non-trivial as a topological space which is reflected by the fact that $\widetilde{\KO}(S^1)\cong\mathbb{Z}_2$ is non-trivial. For non-closed paths, $\widetilde{\KO}([a,b])$ is trivial and thus the index bundle cannot yield any information about the parity. 
A first attempt to improve Pejsachowicz' approach topologically was made by Hu and Portaluri in \cite{HuP}, where it is no longer required that $A_a(t)=A_b(t)$ for all $t\in\mathbb{R}$ in \eqref{lin-Hamilton}, but instead the authors assume in addition to the hyperbolicity of the limits in \eqref{limitshyp} that
\begin{align}\label{transversal}
E^u_{\lambda}(0)\oplus E^s_{\lambda}(0)=\mathbb{R}^d
\end{align}
at the endpoints $\lambda=a,b$ of the parametrising interval $[a,b]$. There is a well known gluing construction for vector bundles that yields bundles over $S^1$ from bundles over compact intervals \cite{Hatcher2}. Hu and Portaluri used this to obtain from the spaces \eqref{stableinf} for $\lambda\in[a,b]$ two bundles $E^s(+\infty)$, $E^s(-\infty)$ over $S^1$. Their main theorem shows that the parity of the path of operators $L_\lambda$ in \eqref{Lintroduction} is non-trivial if these bundles over $S^1$ are non-isomorphic. If in addition $A_a(t)=A_b(t)$ for all $t\in\mathbb{R}$, i.e., the path of Fredholm operators $L=\{L_\lambda\}_{\lambda\in[a,b]}$ is closed, there is no effect by the gluing construction which yields Pejsachowicz' theorem \cite{JacoboAMS}.\\
The aim of this paper is to substantially generalise the previous works on topological bifurcation theory for \eqref{Hamilton} concerning both the analytical and topological framework, which also motivates new research in multiparameter bifurcation theory by $K$-theoretical methods.
Firstly, using recent joint work of the first author with Longo and P\"otzsche, we consider \eqref{Hamilton} without even assuming the limits in \eqref{limitshyp} to exist. Instead we require the existence of exponential dichotomies, which is a concept that was already introduced by Perron in $1930$. As exponential dichotomies are a very natural and at the same time quite general setting for studying homoclinic trajectories of differential equations, it was already suggested by Pejsachowicz in \cite{JacoboAMS} to prove his theorem under this assumption. Let us point out that any system \eqref{lin-Hamilton} for which the limits \eqref{limitshyp} exist and are hyperbolic has the exponential dichotomies that we need below. Moreover, by definition an exponential dichotomy yields a splitting of the extended state space for linear non-autonomous differential equations into two bundles made by the stable and unstable spaces \eqref{stableunstable}. This naturally suggests to use methods from $K$-theory to study the equations \eqref{lin-Hamilton} in this setting. Secondly, the main achievement of Hu and Portaluri's work \cite{HuP} is to extend Pejsachowicz' theorem to systems \eqref{lin-Hamilton}, where the path of operators $L=\{L_\lambda\}$ in \eqref{Lintroduction} is not necessarily closed but has invertible endpoints. Here we go much beyond the state of the art and consider \eqref{lin-Hamilton} parametrised by any compact topological space $\Lambda$, where we assume in addition that $\Lambda_0\subset\Lambda$ is a closed subspace such that \eqref{transversal} holds for all $\lambda\in\Lambda_0$. This contains Pejsachowicz' setting for $\Lambda=S^1$, $\Lambda_0=\{\lambda_0\}$ and Hu and Portaluri's setting for $\Lambda=[a,b]$, $\Lambda_0=\{a,b\}$. We consider the index bundle in the relative $K$-theory group $\KO(\Lambda,\Lambda_0)$ as in the third author's previous works \cite{Wat15, NilsGeod, NilsLag}, and obtain as main achievement a surprisingly plain index theorem in terms of vector bundles made by the spaces \eqref{stableunstable} over the parameter space $\Lambda$. If $(\Lambda,\Lambda_0)=(S^1,\{\lambda_0\})$ or $(\Lambda,\Lambda_0)=([a,b],\{a,b\})$, then $\KO(\Lambda,\Lambda_0)\cong\mathbb{Z}_2$ and the previous results from \cite{JacoboAMS} and \cite{HuP} are immediate consequences of our theorem (see Section \ref{section-singlepar} below). Let us emphasize that Hu and Portaluri also consider the relative group $\KO([a,b],\{a,b\})$ in the beginning of their proof, but eventually their argument is based on the linear order of the interval $[a,b]$ and thus does not transfer to our setting.\\
Fitzpatrick and Pejsachowicz studied multiparameter bifurcation problems by the index bundle in $K$-theory in various papers, e.g., \cite{FiPejsachowiczII, Jacobo, JacoboTMNAI}. Applications still hinge on the question whether $\widetilde{\KO}(\Lambda)$ is non-trivial for the compact parameter space $\Lambda$, which is a rather unnatural assumption (see however \cite{Jacobo} and \cite{NilsBif}, where families of differential equations on bounded domains are parameterised by Grassmannians).
 A further aim of this paper is to initiate a novel approach to multiparameter bifurcation theory by relative $K$-theory $\KO(\Lambda,\Lambda_0)$ that will be continued in the upcoming work \cite{MultBif}. As an outcome, contractible spaces like higher dimensional intervals $\Lambda=I^k$ are now admissible parameter spaces relative to their boundary $\Lambda_0=\partial I^k$, and multiparameter bifurcation for \eqref{Hamilton} can be found by our main theorem of the present work, which will also play an important role in \cite{MultBif}.\\
Our paper is structured as follows. In the following second section we recap some preliminaries on dynamical systems and topological $K$-theory, where we in particular introduce the concept of exponential dichotomies as well as the index bundle in relative $K$-theory. Let us emphasize that we define vector bundles in terms of families of projections to make our exposition accessible to non-topologists. The third section is the core of our paper and is divided into four parts. We begin by a look at the differential operators $L_\lambda$ in \eqref{Lintroduction}, where we state our exact assumptions on \eqref{lin-Hamilton} in this paper, discuss the Fredholm property and compute the adjoints as unbounded operators in $L^2(\mathbb{R},\mathbb{R}^d)$ with dense domain $H^1(\R,\R^d)$. Secondly, we state our main theorem and some corollaries of it, which includes the case that the limits \eqref{limitshyp} exist and are hyperbolic. Thirdly, we consider the above mentioned cases where $\KO(\Lambda,\Lambda_0)\cong\mathbb{Z}_2$ and get the main theorems of Pejsachowicz \cite{JacoboAMS} and Hu-Portaluri \cite{HuP}. Finally, we prove our main theorem which actually is the longest part of our work and itself divided into four main steps. In the fourth section we discuss an example for a general compact space $\Lambda$, where we can compute our index in $\KO(\Lambda,\Lambda_0)$ explicitly in terms of eigenspaces of the matrices $A_\lambda$ in \eqref{lin-Hamilton}. If $\Lambda=S^1$ and $\Lambda_0=\{-1\}$ our example generalises Pejsachowicz' from \cite{JacoboAMS}. Hu and Portaluri's paper \cite{HuP} does not contain an example of their index theorem, which is a gap that we close by considering ours in the special case that $\Lambda=[0,\pi]$ and $\Lambda_0=\{0,\pi\}$. The final section of our paper deals with the bifurcation problem for \eqref{Hamilton}, where we consider the case of a compact contractible parameter space $\Lambda$ and where $\Lambda_0$ consists of two distinct points in $\Lambda$. Then $\KO(\Lambda,\Lambda_0)\cong\mathbb{Z}_2$ and a non-trivial index bundle yields a whole $1$-codimensional subset of the parameter space that consists of bifurcation points. Let us emphasize that the bifurcation problem is even more interesting in this setting if $\Lambda_0$ is connected but not a singleton, as the classical parity then fails as a bifurcation invariant. The main aim of the upcoming work \cite{MultBif} is to show that in contrast our relative $K$-theory approach still works and can even spot isolated bifurcation points in higher-dimensional parameter spaces.


\section{Preliminaries}
This first section is intended to review the analytical and topological concepts that are needed to understand our index theorem below.

\subsection{First Order Equations and Exponential Dichotomies}\label{section-expdich}
In this section, we aim to recap the concept of exponential dichotomy, which is a convenient substitute of hyperbolicity for non-autonomous dynamical systems.\\
Let $\mathbb{I}\subset \mathbb{R}$ be an unbounded interval and $A: \R\to \Mat(d,\mathbb{R})$ a continuous matrix-valued map, where $\Mat(d,\mathbb{R})$ denotes the space of all real $d\times d$ matrices. An invariant projector for the linear system
\begin{equation}\label{LLL}
\dot{x}=A(t)x
\end{equation}
is a family $P: \mathbb{I}\to \Mat(d,\mathbb{R})$ of projections (i.e., $P(t)^2=P(t)$ for all $t\in \mathbb{I}$) such that
\begin{equation}\label{invariance}
\Phi(t,s)P(s)=P(t)\Phi(t,s) \text{ for all }  t,s\in \mathbb{I},
\end{equation}
where $\Phi(t,s)\in \GL(d,\mathbb{R})$ denotes the transition matrix of \eqref{LLL}, i.e., the map $t\mapsto\Phi(t,s)$ satisfies \eqref{LLL} on all of $\mathbb{I}$ and $\Phi(s,s)=\id$ for all $s\in\mathbb{I}$. Let us note for later reference the common property
\begin{equation}\label{triplePhi}
\Phi(t,s)\Phi(s,r)=\Phi(t,r)\quad \text{ for all }t,s,r\in \mathbb{I},\, r\leq s\leq t.
\end{equation}
The following definition is fundamental for the main assumption on the equations that we consider below.
\begin{definition}\label{def-expdich}
The linear system \eqref{LLL} has an exponential dichotomy on $\mathbb{I}$ if there exists an invariant projector
$P$ and real numbers $K\geq 1$ and $\alpha>0$ such that
\begin{equation*}
\|\Phi(t,s)P(s)\|\leq Ke^{-\alpha(t-s)}\quad\text{and}\quad \|\Phi(s,t)(I_d-P(t))\|\leq Ke^{-\alpha(t-s)}\text{ for all }t\geq s\text{ with }t,s\in \mathbb{I}.
\end{equation*}
\end{definition}
If \eqref{LLL} has an exponential dichotomy on $\mathbb{I}=[\tau,+\infty)$, we set
\begin{equation*}
E^s(\tau)=\left\{x\in \R^d : \lim\limits_{t\to\infty}\Phi(t,\tau)x=0\right\},
\end{equation*}
and if there is an exponential dichotomy on $\mathbb{I}=(-\infty,\tau]$
\begin{equation*}
E^u(\tau)=\left\{x\in \R^d : \lim\limits_{t\to-\infty}\Phi(t,\tau)x=0\right\},
\end{equation*}
which is in accordance with our notation in \eqref{stableunstable}. It is important to note that these spaces and the invariant projector determine each other by
       \[
       R(P(t)) = E^s(t) \quad \text{for all } t \in \mathbb{I},
       \]
       when $\mathbb{I}=[t,\infty)$ and where $R(P(t))$ denotes the range of the projector $P(t)$, as well as
       \[
       N(P(t)) = E^u(t) \quad \text{for all } t \in \mathbb{I},
       \]
       when $\mathbb{I}=(-\infty,t]$, and where $N(P(t))$ denotes the kernel of the projector $P(t)$.
In particular, if there is an exponential dichotomy on all of $\mathbb{R}$, then this uniquely determines the projector $P$. Moreover, in this case $E^u(t)\cap E^s(t)=\{0\}$ for any $t\in\mathbb{R}$, which shows that \eqref{LLL} only has the trivial solution $x\equiv 0$ on the whole line.\\
As an example, note that an autonomous linear differential equation $\dot{x}=Ax$ has an exponential dichotomy on an unbounded interval $\mathbb{I}$ if and only if it is hyperbolic, i.e., the matrix $A\in\Mat(d,\mathbb{R})$ has no eigenvalues on the imaginary axis. Furthermore, the sets $E^s(t)$ and $E^u(t)$ are constant and given by the direct sum of the generalised eigenspaces corresponding to eigenvalues in
\[\sigma^-(A)=\{\mu\in\sigma(A):\,\RE(\mu)<0\}\quad\text{and}\quad \sigma^+(A)=\{\mu\in\sigma(A):\,\RE(\mu)>0\}.\]
Clearly, in this case the constants $K\geq 1$ and $\alpha>0$ in Definition \ref{def-expdich} can explicitly be determined by using the spectrum and the eigenvectors of the matrix $A$.\\
The following Theorem from \cite{coppel:78} is vital for showing that a given system has an exponential dichotomy.
\begin{theorem}\label{roughness}
Suppose that \eqref{LLL} has an exponential dichotomy with constants $K\geq 1$ and $\alpha>0$ on $\mathbb{I}$, where $\mathbb{I}=[\tau,\infty)$, $\mathbb{I}=(-\infty,\tau]$ for some $\tau\in\R$, or $\mathbb{I}=\mathbb{R}$. If $B: \R\to \Mat(d,\mathbb{R})$ is continuous and
\begin{equation}\label{estimation}
\sup_{t\in \mathbb{I}}|B(t)|<\frac{\alpha}{4K^2},
\end{equation}
then the perturbed system
\begin{equation*}
\dot{x}=(A(t)+B(t))x
\end{equation*}
has an exponential dichotomy on $\mathbb{I}$ as well.
\end{theorem}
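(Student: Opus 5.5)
This is Coppel's roughness theorem. I will prove it by the fixed-point (Lyapunov--Perron) method: construct bounded solutions of the perturbed system via integral equations built from the Green's function of the unperturbed system, and read off the new dichotomy from their estimates. I treat $\mathbb{I}=\R$ first and indicate the changes for half-lines at the end. Write $\Phi$ for the transition matrix of $\dot x=A(t)x$ with projector $P$, and set
\[G(t,s)=\Phi(t,s)P(s)\ (t\geq s),\qquad G(t,s)=-\Phi(t,s)(I_d-P(s))\ (t<s).\]
By Definition \ref{def-expdich}, $|G(t,s)|\leq Ke^{-\alpha|t-s|}$. Put $\delta=\sup|B|<\alpha/(4K^2)$ and let $\Psi$ denote the transition matrix of the perturbed system.

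\textbf{Step 1 (solutions on a forward half-line).} Fix $\tau$. For $\xi\in R(P(\tau))$, look for a solution on $[\tau,\infty)$ of
\[x(t)=\Phi(t,\tau)\xi+\int_\tau^\infty G(t,s)B(s)x(s)\d s.\]
Use the weighted space of continuous $x$ on $[\tau,\infty)$ with norm $\sup_t e^{\beta(t-\tau)}|x(t)|$ for some $0\leq\beta<\alpha$. In this norm the integral operator has norm at most
\[K\delta\Bigl(\tfrac{1}{\alpha-\beta}+\tfrac{1}{\alpha+\beta}\Bigr).\]
Choosing $\beta=\alpha-2K\delta$, the condition $\delta<\alpha/(4K^2)$ makes this bound less than $1$. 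This is the point where the constant $4K^2$ enters, and it is the step I expect to need the most care, since the choice of $\beta$ must balance contraction against the decay rate. The contraction mapping principle then gives a unique solution $x_\xi$, depending linearly on $\xi$, with
\[|x_\xi(t)|\leq C e^{-\beta(t-\tau)}|\xi|.\]
Differentiating the integral equation shows that $x_\xi$ solves $\dot x=(A+B)x$. Thus the map $\xi\mapsto x_\xi(\tau)$ is injective from $R(P(\tau))$ into the perturbed stable space $\tilde E^s(\tau)$.

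\textbf{Step 2 (solutions on a backward half-line).} Symmetrically, on $(-\infty,\tau]$ start from $\eta\in N(P(\tau))$ and use the integral $\int_{-\infty}^\tau G(t,s)B(s)y(s)\d s$. This yields decaying solutions backward in time and an injection of $N(P(\tau))$ into the perturbed unstable space $\tilde E^u(\tau)$.

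\textbf{Step 3 (the splitting and the projector).} Next I show $\tilde E^s(\tau)\cap\tilde E^u(\tau)=\{0\}$. A solution that is bounded on all of $\R$ satisfies $x(t)=\int_{\R}G(t,s)B(s)x(s)\d s$. The unweighted version of the estimate in Step 1 gives an operator norm at most $2K\delta/\alpha<1$, so $x\equiv 0$. Counting dimensions then forces
\[\tilde E^s(\tau)\oplus\tilde E^u(\tau)=\R^d.\]
Define $Q(\tau)$ to be the projection onto $\tilde E^s(\tau)$ along $\tilde E^u(\tau)$. Invariance $\Psi(t,s)Q(s)=Q(t)\Psi(t,s)$ is automatic, because $\Psi$ maps solutions to solutions and so preserves both families of subspaces.

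\textbf{Step 4 (the dichotomy estimates).} It remains to bound $\|\Psi(t,s)Q(s)\|$ and $\|\Psi(s,t)(I_d-Q(t))\|$. The decay of the solutions $x_\xi$ and $y_\eta$ already gives exponential rates, but only relative to $|\xi|$, not relative to $|x(\tau)|$. I therefore also need a uniform bound on $\|Q(\tau)\|$. I would get it by writing $Q(\tau)$ explicitly through the fixed-point formulas, as $P(\tau)$ plus an integral correction of size $O(K\delta/\alpha)$, and then using $\|P(\tau)\|\leq K$ together with the smallness of $\delta$ to bound $\|Q(\tau)\|$ uniformly in $\tau$. The cleanest route is Coppel's: verify directly that the perturbed Green's function $\tilde G$, defined through $Q$, satisfies the resolvent identity
\[\tilde G=G+\int G B\tilde G,\]
and estimate $\tilde G$ by the same contraction bound. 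Its two pieces are exactly $\Psi Q$ and $-\Psi(I_d-Q)$, which yields the required exponential estimates with rate $\beta>0$.

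\textbf{Half-lines.} For $\mathbb{I}=[\tau,\infty)$ or $(-\infty,\tau]$ the argument is the same, except that the complementary subspace is no longer determined by the system. One fixes it as the image, under the perturbed flow, of the unperturbed complementary space $N(P(\tau))$ (respectively $R(P(\tau))$), and then runs Step 4 with that choice.
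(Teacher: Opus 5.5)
Your outline is correct and is essentially the classical Lyapunov--Perron proof of Coppel's roughness theorem, which the paper cites rather than proves. The ingredients match: fixed points of integral equations built from the unperturbed Green's function, decay rate $\alpha-2K\delta$, and, on a half-line, the projection with $N(Q(\tau))=N(P(\tau))$. That choice is exactly what makes the resolvent identity $\tilde G=G+\int GB\tilde G$ hold there, since the homogeneous remainder then starts in $R(P(\tau))\cap N(P(\tau))=\{0\}$.

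One minor correction to your commentary: with $\beta=\alpha-2K\delta$ the contraction in Step 1 only needs $2K\delta<\alpha$, so Step 1 is not where the factor $4K^2$ is forced. In fact your Green's-function route in Step 4 never uses more than $2K\delta<\alpha$.
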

Let us point out that stronger perturbation results for exponential dichotomies can be found, e.g., in \cite[Theorem~3.1 and Corollary~3.1]{JuWiggins2001}. Here we stick to Coppel's classical Theorem \ref{roughness} for the sake of a simple presentation of our results.\\
It was also shown by Coppel in \cite{coppel:78} that if $\dot{x} = A(t)x$ has an exponential dichotomy on an interval  $[\tau_1, \infty)$, then it also has an exponential dichotomy on any larger interval $[\tau_2, \infty)$ with $\tau_2 \leq \tau_1$. The same holds for exponential dichotomies on intervals of the form $(-\infty, \tau]$.\\
Let us recall that a matrix-valued map $A:\mathbb{R}\rightarrow\Mat(d,\mathbb{R})$ is called asymptotically hyperbolic if the limits $A_{\pm}:= \lim_{t \to \pm \infty} A(t)$ exist and are hyperbolic (cf. \cite{AlbertoODE}). Then \eqref{LLL} has an exponential dichotomy on both $[\tau,\infty)$ and $(-\infty,-\tau]$ for any $\tau\geq 0$, which directly follows from the above as

\begin{equation*}
\dot{x} = A(t)x=(A_\pm+B(t))x \text{ for }B(t):=A(t)-A_\pm.
\end{equation*}
The next result links the existence of an exponential dichotomy on all of $\R$ to the exponential dichotomies on both half lines.
\begin{theorem}\label{thm-invertible}
The linear system $\eqref{LLL}$ possesses an exponential dichotomy on $\mathbb{I}=\R$ if and only if
\begin{enumerate}[$(a)$]
\item $\eqref{LLL}$ admits exponential dichotomies on both $\R^+_0:=[0,\infty)$ and $\R^-_0:=(-\infty,0]$ with respective projectors $P^+$ and $P^-$,
\item $R(P^+(0))\oplus N(P^-(0))=\R^d$.
\end{enumerate}
\end{theorem}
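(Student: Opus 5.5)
We prove the two implications separately. The forward one is a restriction argument. For the converse we build a candidate projector on $\R$ from the data at $t=0$ and then verify the estimates of Definition~\ref{def-expdich}, first on each half line and then across $0$.

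\emph{Necessity.} Suppose \eqref{LLL} has an exponential dichotomy on $\R$ with projector $P$. Restricting $P$ to $\R^+_0$ and to $\R^-_0$ gives exponential dichotomies there with the same constants, so $(a)$ holds with $P^\pm:=P|_{\R^\pm_0}$. By the identification of ranges and kernels recalled after Definition~\ref{def-expdich}, $R(P^+(0))=E^s(0)=R(P(0))$ and $N(P^-(0))=E^u(0)=N(P(0))$. Since $P(0)$ is a projection, $R(P(0))\oplus N(P(0))=\R^d$, which is $(b)$.

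\emph{Sufficiency.} Using $(b)$, let $Q_0$ be the projection of $\R^d$ onto $R(P^+(0))$ along $N(P^-(0))$, and set $Q(t):=\Phi(t,0)Q_0\Phi(0,t)$ for $t\in\R$. Each $Q(t)$ is a projection, and the invariance \eqref{invariance} follows directly from the cocycle property of $\Phi$. For $t\geq 0$ we have $R(Q(t))=\Phi(t,0)E^s(0)=E^s(t)=R(P^+(t))$. Two projections with the same range satisfy $Q(t)P^+(t)=P^+(t)$ and $P^+(t)Q(t)=Q(t)$.

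The main obstacle is that the kernels of $Q(t)$ and $P^+(t)$ differ, so the first step is a uniform bound on $\|Q(t)\|$ for $t\geq 0$. The two identities above give
\[
Q(t)-P^+(t)=P^+(t)\bigl(Q(t)-P^+(t)\bigr)\bigl(I_d-P^+(t)\bigr).
\]
Conjugating with $\Phi$, this becomes
\[
Q(t)-P^+(t)=\Phi(t,0)P^+(0)\,\bigl(Q_0-P^+(0)\bigr)\,\Phi(0,t)\bigl(I_d-P^+(t)\bigr).
\]
Its norm is at most $K^2e^{-2\alpha t}\|Q_0-P^+(0)\|$, and $\|P^+(t)\|\leq K$. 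Hence $\sup_{t\geq0}\|Q(t)\|<\infty$. The symmetric argument on $\R^-_0$, using $N(Q(t))=N(P^-(t))$ and the relations $(I_d-Q)(I_d-P^-)=I_d-Q$ and $(I_d-P^-)(I_d-Q)=I_d-P^-$, gives $\sup_{t\leq0}\|Q(t)\|<\infty$.

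\emph{Estimates on each half line.} For $t\geq s\geq 0$,
\[
\Phi(t,s)Q(s)=\Phi(t,s)P^+(s)Q(s),\qquad\text{so}\qquad \|\Phi(t,s)Q(s)\|\leq K\sup_{r\geq0}\|Q(r)\|\,e^{-\alpha(t-s)}.
\]
For the unstable part we use $I_d-Q(t)=(I_d-Q(t))(I_d-P^+(t))$ to write
\[
\Phi(s,t)(I_d-Q(t))=(I_d-Q(s))\,\Phi(s,t)(I_d-P^+(t)),
\]
which is bounded by $\bigl(1+\sup_{r\geq0}\|Q(r)\|\bigr)Ke^{-\alpha(t-s)}$. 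The analogous estimates for $s\leq t\leq0$ follow in the same way from $P^-$.

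\emph{Mixed case.} For $s\leq 0\leq t$, invariance and \eqref{triplePhi} give the factorisations
\[
\Phi(t,s)Q(s)=\Phi(t,0)Q_0\,\Phi(0,s)Q(s),\qquad \Phi(s,t)(I_d-Q(t))=\Phi(s,0)(I_d-Q_0)\,\Phi(0,t)(I_d-Q(t)).
\]
Each factor was bounded in the half-line estimates, and the exponentials multiply to $e^{-\alpha t}e^{-\alpha(-s)}=e^{-\alpha(t-s)}$. Hence $Q$ is an exponential dichotomy on $\R$, with enlarged constant $K'$ and the same rate $\alpha$.
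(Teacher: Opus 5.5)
The paper states this theorem without proof, as a standard fact from the theory of exponential dichotomies, so there is no in-paper argument to compare against. Your proposal supplies one, and it is correct: the classical proof.

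It rests on two observations. First, on a half line only the range (on $\R^+_0$) or the kernel (on $\R^-_0$) of the dichotomy projector is determined. Replacing the complementary space by another invariant family preserves the dichotomy with a larger constant, because $Q(t)-P^+(t)$ decays like $e^{-2\alpha t}$. Second, idempotence of $Q_0$ lets you split the mixed-time estimates at $t=0$. Your necessity argument and the whole $\R^+_0$ part are fine as written.

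There is one slip on $\R^-_0$. There $N(Q(t))=N(P^-(t))$, so $I_d-Q(t)$ and $I_d-P^-(t)$ are projections with the same range. The correct analogue of $QP^+=P^+$, $P^+Q=Q$ is therefore
\[
(I_d-Q)(I_d-P^-)=I_d-P^-,\qquad (I_d-P^-)(I_d-Q)=I_d-Q,
\]
that is, $QP^-=Q$ and $P^-Q=P^-$. The identities you wrote are the reverse. They would force $R(P^-(t))\subseteq R(Q(t))$, hence $Q=P^-$, which is false in general.

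With the correct relations your symmetric argument goes through. For $t\leq 0$,
\[
P^-(t)-Q(t)=\Phi(t,0)\bigl(I_d-P^-(0)\bigr)\bigl(P^-(0)-Q_0\bigr)\Phi(0,t)P^-(t),
\]
whose norm is at most $K^2e^{2\alpha t}\|P^-(0)-Q_0\|$. For $s\leq t\leq 0$ you use
\[
\Phi(t,s)Q(s)=Q(t)\Phi(t,s)P^-(s),\qquad \Phi(s,t)\bigl(I_d-Q(t)\bigr)=\Phi(s,t)\bigl(I_d-P^-(t)\bigr)\bigl(I_d-Q(t)\bigr).
\]

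Two minor points. The two half-line dichotomies may come with different constants, so take the maximum of the $K$'s and the minimum of the $\alpha$'s before gluing. Also, the backward factorisation $\Phi(s,t)=\Phi(s,0)\Phi(0,t)$ for $s\leq 0\leq t$ is not literally \eqref{triplePhi}, which the paper states only for $r\leq s\leq t$. It does hold for all times, since $\Phi(t,s)=X(t)X(s)^{-1}$ for any fundamental matrix $X$.
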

Finally, with the problem \eqref{LLL}, we associate the dual differential equation
\begin{equation}\label{A-P-L}
\dot{x}=-A^{\top}(t)x,
\end{equation}
 where $A(t)^{\top}$ denotes the transpose of the matrix $A(t)$. It can easily be shown (see \cite{Poetzsche-24}) that if \eqref{LLL} has an exponential dichotomy on $\mathbb{I}$ with constants $K\geq 1$, $\alpha>0$, and projector $P: \mathbb{I}\to \Mat(d,\mathbb{R})$, then \eqref{A-P-L} also has an exponential dichotomy on $\mathbb{I}$ with the same constants $K\geq 1$, $\alpha>0$, and the projector $P^{\ast}: \mathbb{I}\to \Mat(d,\mathbb{R})$ is given by
 \begin{equation}\label{dual-operator}
 P^{\ast}(t)=I_d-P(t)^{\top}.
 \end{equation}
 Consequently, for any $\tau\geq 0$ we see that the stable and unstable spaces of \eqref{A-P-L} are
 \begin{align*}
\left\{x(\tau)\in \mathbb{R}^d: \dot{x}(t)=-A^{\top}(t)x(t),\;x(t)\xrightarrow[t\rightarrow+\infty]{}0\right\}&=E^s(\tau)^{\perp} \text{ for }
\mathbb{I}=[\tau,\infty),\\
\left\{x(-\tau)\in \mathbb{R}^d: \dot{x}(t)=-A^{\top}(t)x(t),\;x(t)\xrightarrow[t\rightarrow-\infty]{}0\right\}&=E^u(-\tau)^{\perp} \text{ for }
\mathbb{I}=(-\infty,-\tau].
\end{align*}
Henceforth we consider parameterised linear differential equations of the form
\begin{equation}\label{P-L}
\dot{x}=A_{\lambda}(t)x,
\end{equation}
where $\Lambda$ is a compact connected topological space, $A:\Lambda\times\mathbb{R}\to\Mat(d,\mathbb{R})$ is continuous and each map $A_\lambda:=A(\lambda,\cdot):\mathbb{R}\rightarrow\Mat(d,\mathbb{R})$ is bounded and uniformly continuous. We will assume  that for any $\lambda\in\Lambda$, the problem \eqref{P-L} admits exponential dichotomies on both intervals $\R^+_0$ and  $\R^-_0$ with respective projectors $P^+_{\lambda}$ and $P^-_{\lambda}$.
The following theorem was recently proved by the first author in collaboration with P\"otzsche and Longo (cf.~\cite{Poetzsche-24, Longo}).
\begin{theorem}\label{continuous-family-projections}
Under the above assumptions, the invariant projectors $P^-_\lambda(-\tau)$ and $P^+_\lambda(\tau)$ can be chosen such that the mappings
$\lambda\longmapsto P^-_{\lambda}(-\tau)\in\Mat(d,\mathbb{R})$ and $\lambda\longmapsto P^+_{\lambda}(\tau)\in\Mat(d,\mathbb{R})$  are continuous on $\Lambda$ for any $\tau\geq 0$.
\end{theorem}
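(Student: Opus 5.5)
The plan is to show that the ranges $E^s_\lambda(\tau)=R(P^+_\lambda(\tau))$ and kernels $E^u_\lambda(-\tau)=N(P^-_\lambda(-\tau))$ depend continuously on $\lambda$, and then to build continuous projectors with these prescribed ranges and kernels. On the half line only the range of $P^+$ (resp. the kernel of $P^-$) is canonically determined, so we have freedom in the complementary space. We fix this by an orthogonal choice: let $P^+_\lambda(\tau)$ be the orthogonal projection onto $E^s_\lambda(\tau)$. We then transport it along the flow by $P^+_\lambda(t)=\Phi_\lambda(t,\tau)P^+_\lambda(\tau)\Phi_\lambda(\tau,t)$. By Coppel's results this is again a dichotomy projector on $[\tau,\infty)$, possibly with a larger constant $K$; a projector with the correct range still gives an exponential dichotomy. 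The case of $P^-_\lambda(-\tau)$ is symmetric, using the orthogonal projection along $E^u_\lambda(-\tau)$, i.e. onto $E^u_\lambda(-\tau)^\perp$. It therefore suffices to prove that $\lambda\mapsto E^s_\lambda(\tau)$ is continuous as a map into the Grassmannian. Orthogonal projections onto a continuously varying subspace vary continuously, and the dimension is locally constant; since $\Lambda$ is connected, it is constant.

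\textbf{Local robustness.} Fix $\lambda_0$ with dichotomy constants $K,\alpha$ on $[0,\infty)$. Continuity of $A$ on $\Lambda\times\R$ alone gives only locally uniform closeness in $t$. So I would first argue that $\sup_{t\ge0}|A_\lambda(t)-A_{\lambda_0}(t)|$ is small for $\lambda$ near $\lambda_0$. If only compact-in-time closeness is available, I would instead use the standing assumptions (bounded, uniformly continuous $A_\lambda$ with dichotomies for every $\lambda$) together with the hull/compactness argument of Longo--P\"otzsche. This is the step I expect to be the main obstacle: Theorem~\ref{roughness} requires a uniform bound \eqref{estimation} on the whole half line, and some uniformity of the dichotomy constants in $\lambda$ must be extracted from compactness of $\Lambda$. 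Granting that, Theorem~\ref{roughness} shows that $\dot x=A_\lambda x$ has a dichotomy near $\lambda_0$ with constants uniform in $\lambda$.

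\textbf{Continuity of the stable spaces.} Here I would use the Lyapunov--Perron characterization. Given $\xi\in E^s_{\lambda_0}(\tau)$, the perturbed stable solution with the same $P_{\lambda_0}$-component solves
\[x(t)=\Phi_{\lambda_0}(t,\tau)\xi+\int_\tau^t\Phi_{\lambda_0}(t,s)P_{\lambda_0}(s)B(s)x(s)\,ds-\int_t^\infty\Phi_{\lambda_0}(t,s)(I-P_{\lambda_0}(s))B(s)x(s)\,ds,\]
with $B=A_\lambda-A_{\lambda_0}$. This is a contraction on bounded functions with norm $O(\|B\|_\infty)$. Hence $E^s_\lambda(\tau)$ is the graph of a linear map $E^s_{\lambda_0}(\tau)\to N(P_{\lambda_0}(\tau))$ whose norm tends to $0$ as $\lambda\to\lambda_0$. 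This gives continuity of the subspaces, and of the orthogonal projections. The unstable side is identical on $(-\infty,-\tau]$. Alternatively, one can use the duality \eqref{dual-operator} applied to $\dot x=-A_\lambda^\top x$, which converts $E^u$ statements into $E^s$ statements for orthogonal complements.
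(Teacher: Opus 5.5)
\paragraph{A genuine gap: the step you ``grant'' is false under the stated hypotheses.} Your whole argument rests on
\[
\sup_{t\ge 0}|A_\lambda(t)-A_{\lambda_0}(t)|\to 0 \quad\text{as}\quad \lambda\to\lambda_0 .
\]
The standing assumptions are joint continuity of $A$ on $\Lambda\times\R$, boundedness and uniform continuity of each $A_\lambda$, and half-line dichotomies for every $\lambda$. They do not imply this uniform estimate. Your hull/compactness fallback cannot replace it either, because under these hypotheses alone the statement fails.

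Take $d=1$, $\Lambda=[0,1]$, a Lipschitz $\phi:\R\to[0,1]$ with $\phi=0$ on $(-\infty,1]$ and $\phi=1$ on $[2,\infty)$, and set $A_\lambda(t)=-1+2\phi(\lambda t)$. Then:
\begin{itemize}
\item $A$ is continuous, and each $A_\lambda$ is bounded and Lipschitz, even uniformly in $\lambda$.
\item $A_\lambda\equiv -1$ on $\R^-_0$, and every equation has dichotomies on both half-lines.
\item $E^s_0(0)=\R$, whereas $E^s_\lambda(0)=\{0\}$ for $\lambda>0$, since $A_\lambda\equiv 1$ on $[2/\lambda,\infty)$.
\end{itemize}
The rank of a projection equals its trace, so no continuous choice of $P^+_\lambda(0)$ exists. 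Correspondingly, $\ind L_\lambda$ jumps from $0$ to $-1$, so $\lambda\mapsto L_\lambda\in\mathcal{L}(H^1,L^2)$ is not even norm continuous. The paper gives no proof of this statement; it imports it from \cite{Poetzsche-24, Longo}.

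\paragraph{What is missing, and how your argument then closes.} The missing ingredient is an extra hypothesis, not a lemma: $\lambda\mapsto A_\lambda$ must be continuous into the bounded matrix-valued functions with the sup norm. The paper implicitly needs this anyway for the norm continuity of $\lambda\mapsto L_\lambda$.

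With this hypothesis your argument works, and it is simpler than you expect. Every $\dot x=A_\lambda x$ already has a dichotomy on $\R^+_0$ by assumption, so bounded solutions on $[\tau,\infty)$ are exactly the decaying ones. Your Lyapunov--Perron contraction uses only the constants $K,\alpha$ at $\lambda_0$ and the bound $\|A_\lambda-A_{\lambda_0}\|_\infty<\alpha/(2K)$. It therefore already exhibits $E^s_\lambda(\tau)$ as the graph of a small linear map over $E^s_{\lambda_0}(\tau)$. Neither Theorem~\ref{roughness} nor any uniformity of dichotomy constants in $\lambda$ is needed.

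\paragraph{A smaller fix.} Make the orthogonal choice once, at $t=0$, and transport it by the flow. Then
\[
P^+_\lambda(\tau)=\Phi_\lambda(\tau,0)P^+_\lambda(0)\Phi_\lambda(0,\tau)
\]
is continuous in $\lambda$ for every $\tau\ge 0$, by continuous dependence on compact time intervals. This gives a single invariant family, which is what the paper later uses in Step III. Choosing the orthogonal projection separately at each $\tau$, as you do, produces a different family for each $\tau$.
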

Thus the stable and unstable subspaces
\begin{align}\label{continuous-family-projectionsII}
\begin{split}
R(P_{\lambda}^+(\tau))&=E^s_{\lambda}(\tau)\\
R(I_d-P^-_{\lambda}(-\tau))&=N(P_{\lambda}^-(-\tau))=E^u_{\lambda}(-\tau)
\end{split}
\end{align}
of \eqref{P-L} depend in a way continuously on the parameter $\lambda\in\Lambda$, which opens up the above setting for topological methods by the well-known fact that the images of families of projections yield vector bundles (see, e.g.,~\cite[Prop. 1.7.5]{Park}). To be more precise, we consider vector bundles over a compact topological space $\Lambda$ as subsets of the form
\[E=\{(\lambda,u)\in\Lambda\times\mathbb{R}^d:\, u\in R(P_\lambda)\}\subset\Lambda\times\mathbb{R}^d\]
of the topological space $\Lambda\times\mathbb{R}^d$ for some $d\in\mathbb{N}_0:=\mathbb{N}\cup\{0\}$, where $P:\Lambda\rightarrow\Mat(d,\mathbb{R})$ is a continuous family of idempotent matrices, i.e., projections on $\mathbb{R}^d$. We denote by $E_\lambda=R(P_\lambda)$ the fibre over a point $\lambda\in\Lambda$, which is a linear subspace of $\mathbb{R}^d$. If $\Lambda$ is connected, the dimension $\dim(E_\lambda)$ does not depend on $\lambda$ and is called the dimension of the bundle $E$.\\
When $P$ is a constant family and $R(P_\lambda)=V\subset\R^d$, $\lambda\in \Lambda$, we shorten notation by denoting the product $\Lambda\times V$ by $\Theta(V)$. Note that in particular $\Theta(\R^d)$ is the bundle obtained from the family $P_\lambda=I_d$, $\lambda\in\Lambda$. We say that a vector bundle $F$ with family of projections $\tilde{P}:\Lambda\rightarrow\Mat(d,\mathbb{R})$ is a subbundle of $E$ if $R(\tilde{P}_\lambda)\subset R(P_\lambda)$ for all $\lambda\in\Lambda$, i.e., each fibre $F_\lambda$ is a linear subspace of $E_\lambda$.\\
If $E,F$ are two vector bundles over $\Lambda$, then a continuous map $h:E\rightarrow F$ is called a bundle homomorphism if it preserves fibres (i.e., $h(E_\lambda)\subset F_\lambda$, $\lambda\in\Lambda$) and $h|_{E_\lambda}:E_\lambda\rightarrow F_\lambda$ is linear for all $\lambda\in\Lambda$. As usual, a bijective homomorphism is called an isomorphism, and clearly this is the case if and only if every $h|_{E_\lambda}:E_\lambda\rightarrow F_\lambda$ is an isomorphism.\\
Now, by Theorem \ref{continuous-family-projections} and \eqref{continuous-family-projectionsII}, for any $\tau\geq 0$, the sets
\begin{align*}
E^s(\tau):=\{(\lambda,v)\in \Lambda\times \R^d : v\in E^s_{\lambda}(\tau)\} \text{ and }
E^u(-\tau):=\{(\lambda,v)\in \Lambda\times \R^d : v\in E^u_{\lambda}(-\tau)\}
\end{align*}
are vector bundles over $\Lambda$. Let us point out for later reference that this in particular implies that $\dim(E^u_\lambda(0))$ and $\dim(E^s_\lambda(0))$ are constant, and
\begin{align}\label{dimform}
\dim(E^u_\lambda(0))+\dim(E^s_\lambda(0))=d
\end{align}
by Theorem \ref{thm-invertible} if there is some $\lambda\in\Lambda$ for which \eqref{P-L} has an exponential dichotomy on all of $\mathbb{R}$.\\
Finally, it follows from well-known results on the continuity of solutions with respect to parameters (cf. \cite{Amann}) that the map $\mathbb{X} \colon \Lambda \times \R^2 \to \GL(d,\mathbb{R})$ defined by $\mathbb{X}(\lambda, (t,s)) := \Phi_\lambda(t,s)$ is continuous, where $\Phi_\lambda$ denotes the transition matrix of \eqref{P-L}. As
\[
E^s_{\lambda}(\tau) = \mathbb{X}_{\lambda}(\tau,0) E^s_{\lambda}(0) \quad \text{and} \quad E^u_{\lambda}(-\tau) = \mathbb{X}_{\lambda}(-\tau,0) E^u_{\lambda}(0),\quad\lambda\in\Lambda,
\]
we obtain bundle isomorphisms
\begin{equation}\label{ISO-S-U}
E^s(\tau) \cong E^s(0) \quad \text{and} \quad E^u(-\tau) \cong E^u(0) \quad \text{for all } \tau \geq 0,
\end{equation}
that will be of importance below.


\subsection{The Index Bundle in Relative $K$-Theory}\label{section-vb}
The aim of this section is to provide an introduction to relative $K$-theory groups for real vector bundles (called $\KO$-theory) and a variant of the well-known Atiyah-J\"anich index bundle in this setting. We restrict to the main concepts and properties that are necessary to understand our main Theorem \ref{thm-main-lin} below, and postpone a comprehensive introduction to our parallel work \cite{MultBif} that deals with relative $\KO$-theory and multiparameter bifurcation.\\
We have already briefly recalled the concept of a vector bundle in the previous section, where it may have looked tailored to our purposes. Actually, up to isomorphism, the given definition is equivalent to the more common one along local triviality (cf. \cite{KTheoryAtiyah}, \cite{Park}). Thus, as we only need isomorphism classes of vector bundles below, we do not lose any generality if we stick to the previous definition and think of (real) vector bundles over a compact topological space $\Lambda$ as subsets of a product $\Lambda\times\mathbb{R}^d$ of the form
\[E=\{(\lambda,u)\in\Lambda\times\mathbb{R}^d:\, u\in R(P_\lambda)\},\]
where $P:\Lambda\rightarrow\Mat(d,\mathbb{R})$ is a continuous family of projections on $\mathbb{R}^d$ and $d\in\mathbb{N}_0$. Henceforth we need the following sum operation on the set of all vector bundles over $\Lambda$. If $E$, $F$ are defined by families of projections $P_E:\Lambda\rightarrow\Mat(p_1,\mathbb{R})$ and $P_F:\Lambda\rightarrow\Mat(p_2,\mathbb{R})$, then
\[P_{E\oplus F}=\begin{pmatrix}
P_E&0\\
0&P_F
\end{pmatrix}\]
is a family of projections on $\mathbb{R}^{p_1+p_2}$. The bundle $E\oplus F$ induced by this family is called the direct sum bundle of $E$ and $F$, and its fibres $(E\oplus F)_\lambda$ are the direct sums of $E_\lambda\subset\mathbb{R}^{p_1}$ and $F_\lambda\subset\mathbb{R}^{p_2}$ considered as subspaces of $\mathbb{R}^{p_1+p_2}$.


\subsubsection{Relative $K$-Theory}
In this section we briefly recall the construction of the relative $\KO$-theory groups and its main properties, where we mainly follow \cite[\S 10]{Hus}. Henceforth we assume that $\Lambda$ is a compact topological space and $\Lambda_0\subset \Lambda$ a closed subset. We consider triples $\xi=\{E_0,E_1,h\}$, where $E_0$ and $E_1$ are vector bundles over $\Lambda$ and $h: E_0\to E_1$ is a bundle homomorphism such that the restriction $h|_{\Lambda_0}:E_0|_{\Lambda_0}\rightarrow E_1|_{\Lambda_0}$ to $\Lambda_0$ is an isomorphism. If $h: E_0\to E_1$ is an isomorphism (on all of $\Lambda$), the element $\xi=\{E_0,E_1,h\}$ is called trivial.\\
Two triples $\xi^1=\{E_0^1,E_1^1,h_1\}$ and $\xi^2=\{E_0^2,E_1^2,h_2\}$ are isomorphic ($\xi^1\cong\xi^2$) if there exist bundle isomorphisms $\varphi_0: E_0^1\to E_0^2$ and $\varphi_1: E_1^1\to E_1^2$ such that the diagram
\begin{align}\label{ISO1}
\begin{split}
\xymatrix{
E_0^2\ar[r]^{h_2}& E_1^2\\
E_0^1\ar[r]^-{h_1} \ar[u]^{\varphi_0}& \ar[u]_{\varphi_1} E_1^1
}
\end{split}
\end{align}
commutes over $\Lambda_0$. It is readily seen that this is an equivalence relation, and henceforth we let $L(\Lambda,\Lambda_0)$ be the set of isomorphism classes. The elements of $L(\Lambda,\Lambda_0)$ will be denoted by round brackets, i.e., $(E_0,E_1,h)$. Note that $L(\Lambda,\Lambda_0)$ is a commutative semigroup under the operation $\oplus$ induced by the direct sum. Moreover, for any bundles $E_0$, $E_1$, the equivalence class $(E_0,E_1,h)$ only depends on the restriction of $h$ to $\Lambda_0$.\\
To turn $L(\Lambda,\Lambda_0)$ into a group, we define an equivalence relation by $\xi^1\sim \xi^2$ if there are trivial elements $\eta^1,\eta^2$ in $L(\Lambda,\Lambda_0)$ such that $\xi^1\oplus\eta^1\cong\xi^2\oplus\eta^2$. The set of equivalence classes is $\KO(\Lambda,\Lambda_0)$, the real relative $K$-theory of the compact pair $(\Lambda,\Lambda_0)$. Henceforth, we denote the equivalence class of $(E_0,E_1,h)\in L(\Lambda,\Lambda_0)$ in $\KO(\Lambda,\Lambda_0)$ by square brackets $[E_0,E_1,h]$, and our next aim is to briefly survey some elementary properties of $\KO(\Lambda,\Lambda_0)$ from \cite{Hus} that will in particular explain why $\KO(\Lambda,\Lambda_0)$ is an abelian group with respect to the operation
\begin{equation*}
[E_0^0,E_1^0,h_0]+[E_0^1,E_1^1,h_1]=[E_0^0\oplus E_0^1,E_1^0\oplus E_1^1,h_0\oplus h_1],
\end{equation*}
induced by the direct sum of vector bundles. Let us first note that a neutral element is given by any class $[E_0,E_1,h]$ where $h:E_0\rightarrow E_1$ is an isomorphism. Secondly, it is shown in \cite[\S 10, Cor. 5.4]{Hus} that
\begin{equation}\label{homotopy}
[E_0,E_1,h(0)]=[E_0,E_1,h(1)]\in \KO(\Lambda,\Lambda_0).
\end{equation}
for any two vector bundles $E_0$ and $E_1$ over $\Lambda$ and any continuous family of bundle morphisms
\begin{equation*}
h: [0,1]\to \hom(E_0,E_1)
\end{equation*}
such that $(h(t))_{\lambda}\in \GL((E_0)_{\lambda},(E_1)_{\lambda})$ for all $t\in[0,1]$ and $\lambda\in \Lambda_0$. It can be shown from \eqref{homotopy} that if $[E_0,E_1,h],[E_1,E_2,g]\in \KO(\Lambda,\Lambda_0)$, then
\begin{equation}\label{sum}
[E_0,E_1,h]+[E_1,E_2,g]=[E_0,E_2,g\circ h].
\end{equation}
If $[E_0,E_1,h]\in \KO(\Lambda,\Lambda_0)$, then by definition $h|_{\Lambda_0}:E_0|_{\Lambda_0}\rightarrow E_1|_{\Lambda_0}$ is an isomorphism and thus has an inverse $(h|_{\Lambda_0})^{-1}:E_1|_{\Lambda_0}\rightarrow E_0|_{\Lambda_0}$. As $\Lambda_0\subset \Lambda$ is closed, $(h|_{\Lambda_0})^{-1}$ can be extended to a bundle morphism $\widetilde{h}:E_1\rightarrow E_0$ (see \cite[\S 1.4]{KTheoryAtiyah}) and, as explained above, the class $[E_1,E_0,\widetilde{h}]$ does not depend on the extension from $\Lambda_0$ to $\Lambda$. Now it follows from \eqref{sum} that the inverse element of $[E_0,E_1,h]$ in $\KO(\Lambda,\Lambda_0)$ is given by
\begin{equation}\label{inverse-h}
-[E_0,E_1,h]:=[E_1,E_0,\widetilde{h}],
\end{equation}
and thus $\KO(\Lambda,\Lambda_0)$ indeed is a group.\\
Let us briefly recall (cf., e.g., \cite{Hatcher2}) that classically the reduced absolute $\KO$-theory group $\widetilde{\KO}(\Lambda)$ of a compact topological space $\Lambda$ is defined by an equivalence relation on the set of formal differences $[E]-[F]$, where $E,F$ are vector bundles of the same dimension over $\Lambda$ and the square bracket stands for the isomorphism class of a vector bundle. The equivalence relation is given by $[E_0]-[F_0]=[E_1]-[F_1]$ if and only if $E_0\oplus F_1\oplus\Theta(\mathbb{R}^k)$ is isomorphic to $E_1\oplus F_0\oplus\Theta(\mathbb{R}^k)$ for some $k\in\mathbb{N}_0$, and the group operation is defined by $([E_0]-[F_0])+([E_1]-[F_1])=[E_0\oplus E_1]-[F_0\oplus F_1]$. We note for later reference that for any fixed base point $\lambda_0\in\Lambda$ the map
\begin{align}\label{alpha}
\Psi_{\lambda_0}:\KO(\Lambda,\{\lambda_0\})\rightarrow\widetilde{\KO}(\Lambda),\quad [E,F,h]\mapsto [E]-[F]
\end{align}
is a group isomorphism by \cite[\S 10, Cor. 5.3]{Hus}.

When the parameter space $\Lambda$ is contractible (e.g., if $\Lambda$ is a convex subset of some Euclidean space), any vector bundle $E$ over $\Lambda$ is isomorphic to the product bundle $\Theta(\mathbb{R}^d)$ with total space $\Lambda\times \R^d$ for some $d\in\mathbb{N}_0$ (cf.~\cite{Hatcher2}). Hence if $[E_0,E_1,h]\in \KO(\Lambda,\Lambda_0)$ and $\Lambda_0\neq\emptyset$, then there is some $d\in\mathbb{N}_0$ such that
\begin{equation}\label{trivialisationbeta}
[E_0,E_1,h]=[\Theta(\R^d),\Theta(\R^d),\phi\circ h\circ \varphi^{-1}],
\end{equation}
where $\varphi: E_0\to \Theta(\R^d)$ and $\phi: E_1\to\Theta(\R^d)$ are bundle isomorphisms. Thus, if $\Lambda$ is contractible, we can assume that elements in $\KO(\Lambda, \Lambda_0)$ are of the form $[\Theta(\R^d), \Theta(\R^d), h]$, where $h=\{h_\lambda\}_{\lambda\in\Lambda}$ is a family of linear maps $h_\lambda: \mathbb{R}^d \to \mathbb{R}^d$ such that $h_\lambda\in\GL(d,\mathbb{R})$ for $\lambda\in\Lambda_0$. The following theorem will be important in applications in Section \ref{section-singlepar}.

\begin{proposition}\label{beta-iso}
Let $\Lambda$ be a contractible compact topological space and $\Lambda_0=\{\lambda_0,\lambda_1\}\subset\Lambda$ for two distinct elements $\lambda_0, \lambda_1$ of $\Lambda$. Then the map $\psi_{\lambda_0,\lambda_1}: \KO(\Lambda,\Lambda_0)\to \Z_2=\{0,1\}$ defined by
\begin{equation}\label{beta}
(-1)^{\psi_{\lambda_0,\lambda_1}([\Theta(\R^d),\Theta(\R^d),h])}=\sgn \det h_{\lambda_0}\cdot \sgn\det h_{\lambda_1}
\end{equation}
is an isomorphism.
\end{proposition}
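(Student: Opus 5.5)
We have to prove three things about $\psi_{\lambda_0,\lambda_1}$: it is well defined on classes in $\KO(\Lambda,\Lambda_0)$, it is a homomorphism, and it is bijective. By \eqref{trivialisationbeta} every class can be written as $[\Theta(\R^d),\Theta(\R^d),h]$.

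\textbf{Well-definedness.} We show that the right-hand side of \eqref{beta} does not depend on the chosen representative.

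First, the trivialisations in \eqref{trivialisationbeta} are unique only up to automorphisms. Two trivialisations of a bundle over $\Lambda$ differ by a continuous map $g:\Lambda\to\GL(d,\R)$. Since $\Lambda$ is contractible, $g$ is homotopic to a constant map. Hence $\Lambda$ lies in one path component of $\GL(d,\R)$ under $g$, so $\sgn\det g_{\lambda_0}=\sgn\det g_{\lambda_1}$. Replacing $h$ by $\phi\circ h\circ\varphi^{-1}$ with such $g$'s therefore multiplies $\sgn\det h_{\lambda_0}$ and $\sgn\det h_{\lambda_1}$ by the same signs. The product in \eqref{beta} is unchanged.

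Next, we check compatibility with the relations defining $\KO(\Lambda,\Lambda_0)$.
\begin{itemize}
\item An isomorphism of triples \eqref{ISO1} commutes only over $\Lambda_0$. It changes $h$ at $\lambda_0$ and $\lambda_1$ by conjugation with isomorphisms of trivial bundles, which is exactly the case just treated.
\item Adding a trivial triple $\eta$ changes the determinant by $\det$ of an isomorphism $\Theta(\R^k)\to\Theta(\R^k)$ defined on all of $\Lambda$. By the same contractibility argument this contributes equal signs at $\lambda_0$ and $\lambda_1$, so the product is again unchanged.
\end{itemize}

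\textbf{Homomorphism.} This is immediate, since $\det(h\oplus h')=\det h\cdot\det h'$ and the group law in $\KO(\Lambda,\Lambda_0)$ is the direct sum.

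\textbf{Surjectivity.} Since $\Lambda$ is a compact Hausdorff space, Urysohn's lemma gives a continuous $f:\Lambda\to[-1,1]$ with $f(\lambda_0)=1$ and $f(\lambda_1)=-1$. Take $d=1$ and $h_\lambda=f(\lambda)$. Then $h$ is invertible on $\Lambda_0$, and $\psi_{\lambda_0,\lambda_1}$ of this class equals $1$.

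\textbf{Injectivity (the main step).} Suppose $\sgn\det h_{\lambda_0}=\sgn\det h_{\lambda_1}$. We must show that $[\Theta(\R^d),\Theta(\R^d),h]=0$.
\begin{enumerate}
\item If both signs are negative, compose $h$ with the constant reflection $r=\mathrm{diag}(-1,1,\dots,1)$. By \eqref{sum}, $[\Theta,\Theta,r]$ has $r$ invertible on all of $\Lambda$, so it is a neutral element. Hence composing with $r$ does not change the class, and we may assume both determinants are positive.
\item Since $\GL^+(d,\R)$ is path connected, choose paths from $h_{\lambda_0}$ to $I_d$ and from $h_{\lambda_1}$ to $I_d$ in $\GL^+(d,\R)$.
\item Let $G$ be the family of bundle morphisms with $G_\lambda=h_\lambda$ on $\Lambda$. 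Using the two paths on $\Lambda_0=\{\lambda_0,\lambda_1\}$, build a homotopy $h(t)$ of morphisms that is invertible over $\Lambda_0$ for all $t$, with $h(0)=h$ and $h(1)|_{\Lambda_0}=I_d$. To do this, extend the homotopy from $\Lambda_0\times[0,1]\cup\Lambda\times\{0\}$ to $\Lambda\times[0,1]$ by the Tietze extension theorem applied entrywise to matrices. Only invertibility on $\Lambda_0$ is required, and there it is given by the chosen paths.
\item By \eqref{homotopy}, $[\Theta,\Theta,h]=[\Theta,\Theta,h(1)]$. As noted after the definition of $L(\Lambda,\Lambda_0)$, the class only depends on $h(1)|_{\Lambda_0}=I_d$. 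So it equals $[\Theta,\Theta,I_d]=0$.
\end{enumerate}

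I expect the main obstacle to be the well-definedness step: making precise that the sign products are unchanged under the stable-isomorphism relation and under different choices of trivialisations. This relies on the fact that any $g:\Lambda\to\GL(d,\R)$ has constant determinant sign, which follows from contractibility of $\Lambda$ (via the homotopy $g\simeq\mathrm{const}$).
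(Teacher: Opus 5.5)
Your proof is correct and follows essentially the same route as the paper. Both use three facts: the class depends only on $h|_{\Lambda_0}$; $\GL(d,\R)$ has two path components, so $h_{\lambda_0},h_{\lambda_1}$ can be moved to normal forms by paths that extend over $\Lambda\times[0,1]$; and homotopy invariance then closes the argument. You also verify well-definedness and the homomorphism property, which the paper leaves to the reader.

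Your reduction via $r=\mathrm{diag}(-1,1,\dots,1)$ is in fact cleaner than the paper's. The paper relies on $(E,F,h)=(E,F,-h)$ and the representative $h^2(\lambda_1)=-I_d$, and neither changes the determinant sign when $d$ is even.
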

\begin{proof}
We leave it to the reader to check that $\psi_{\lambda_0,\lambda_1}$ is a well-defined group homomorphism, which follows by straightforward computations from basic properties of the determinant. To see that $\psi_{\lambda_0,\lambda_1}$ is bijective, let us firstly recall that $[\Theta(\R^d),\Theta(\R^d),h]$ only depends on $h$ by the restriction of $h$ to $\Lambda_0$, and thus in our case it is uniquely determined by the two invertible matrices $h_{\lambda_0}$ and $h_{\lambda_1}$. Now any continuous map $\hat{h}:I\times\Lambda_0\rightarrow\GL(d,\mathbb{R})$ such that $\hat{h}(0,\lambda_0)=h_{\lambda_0}$ and $\hat{h}(0,\lambda_1)=h_{\lambda_1}$ extends to a continuous map $\hat{h}:I\times\Lambda\rightarrow\Mat(d,\mathbb{R})$. The homotopy invariance \eqref{homotopy} and the fact that $\GL(d,\mathbb{R})$ has two path components now yield that all elements of $\KO(\Lambda,\Lambda_0)$ are of the form $[\Theta(\R^d),\Theta(\R^d),h^i]$, $i=1,2$, for either $h^1(\lambda_0)=I_d=h^1(\lambda_1)$ or $h^2(\lambda_0)=I_d$, $h^2(\lambda_1)=- I_d$. Note that these are indeed all cases as in general $(E,F,h)=(E,F,-h)$ for any element in $L(\Lambda,\Lambda_0)$. This shows the claim as $\psi_{\lambda_0,\lambda_1}([\Theta(\R^d),\Theta(\R^d),h^1])=0$ and $\psi_{\lambda_0,\lambda_1}([\Theta(\R^d),\Theta(\R^d),h^2])=1$.
\end{proof}

\subsubsection{The Index Bundle}\label{subsec-indexbundle}
Now we shall recall the construction of the index bundle for maps
\[
L: (\Lambda,\Lambda_0)\rightarrow (\Phi_0(X,Y),\GL(X,Y)),
\]
where as before $\Lambda$ is compact, $\Lambda_0\subset\Lambda$ is closed and $\Phi_0(X,Y)$ denotes the set of Fredholm operators of index $0$ between two Banach spaces $X$ and $Y$. Our main references for this section are \cite{indbundleIch}, \cite[\S 2.1]{Wat15} and the upcoming work \cite{MultBif}.\\
By \cite[Lemma 2.1]{Wat15}, there is a finite dimensional subspace $V\subset Y$ such that
\begin{align}\label{subspace}
R(L_\lambda)+V=Y,\quad\lambda\in\Lambda,
\end{align}
i.e., $V$ is transversal to the images of all operators $L_{\lambda}$. As $V$ is of finite dimension, there is a bounded projection $P$ onto $V$, which makes the composition
\begin{align*}
X\xrightarrow{L_\lambda}Y\xrightarrow{I_Y-P} R(I_Y-P)
\end{align*}
surjective and
\begin{align}\label{ker-I-P}
N((I_Y-P)\circ L_\lambda)=L_{\lambda}^{-1}(V)\text{ as well as } \dim N((I_Y-P)\circ L_\lambda)=\dim V,
\end{align}
for all $\lambda\in\Lambda$. Consequently, by \cite[Prop.~14.2.3]{Dieck},
\begin{equation}\label{E-V-L}
E(L,V):=\{(\lambda,w)\in \Lambda\times X\mid w\in L_{\lambda}^{-1}(V)\}
\end{equation}
is a vector bundle of dimension $\dim(E(L,V))=\dim(V)$ over $\Lambda$. The map $L$ restricts to a bundle morphism
$L|_{E(L,V)}: E(L,V)\to \Theta(V)$, where $\Theta(V)$ stands for the product bundle $\Lambda\times V$ over $\Lambda$, and thus yields a $\KO$-theory class
\begin{align}\label{defiindbund}
\ind(L):=[E(L,V),\Theta(V),L|_{E(L,V)}]\in \KO(\Lambda,\Lambda_0).
\end{align}
Note that $(L|_{E(L,V)})_\lambda$ is an isomorphism if and only if $L_\lambda$ is an isomorphism, which shows that $\ind(L)$ indeed is in $\KO(\Lambda,\Lambda_0)$. The class \eqref{defiindbund} does not depend on the choice of the finite dimensional space $V$ in \eqref{subspace} (cf. \cite[Thm. 3]{indbundleIch}), and if $\Lambda_0=\{\lambda_0\}$ for some $\lambda_0\in\Lambda$, then
\begin{align}\label{IndBundClassical}
\Psi_{\lambda_0}(\ind(L))=[E(L,V)]-[\Theta(V)]\in\widetilde{\KO}(\Lambda)
\end{align}
is the classical Atiyah-J\"anich bundle (cf. \cite{KTheoryAtiyah}, \cite{Jaenich}), where $\Psi_{\lambda_0}$ is the canonical isomorphism in \eqref{alpha}. The following properties of the index bundle can be found in \cite{indbundleIch}, and most of them directly follow from its definition.
\begin{proposition}\label{index-bundle}
The element $\ind(L)\in \KO(\Lambda,\Lambda_0)$ has the following properties:
\begin{enumerate}[$(i)$]
\item  If $L_\lambda$ is invertible for all $\lambda\in\Lambda$, then $\ind(L)=0\in \KO(\Lambda,\Lambda_0)$.
\item  If $H:[0,1]\times(\Lambda,\Lambda_0)\rightarrow (\Phi_0(X,Y),\GL(X,Y))$ is a homotopy of Fredholm operators, then $$\ind(H(0,\cdot))=\ind(H(1,\cdot))\in \KO(\Lambda,\Lambda_0).$$
\item  If $S:(\Lambda,\Lambda_0)\to (\Phi_0(Y,Z),\GL(Y,Z))$ and $L: (\Lambda,\Lambda_0)\to (\Phi_0(X,Y),$ $\GL(X,Y))$ are two families of Fredholm operators, then
\begin{equation}\label{logaritmmic}
\ind(S\diamond L)=\ind(S)+\ind(L)\in \KO(\Lambda,\Lambda_0),
\end{equation}
where $(S\diamond L)_{\lambda}=S_{\lambda}\circ L_{\lambda}$ for $\lambda\in\Lambda$.
\item If $L_1: (\Lambda,\Lambda_0)\to (\Phi_0(X_1,Y_1),\GL(X_1,Y_1))$ and $L_2: (\Lambda,\Lambda_0)\to (\Phi_0(X_2,Y_2),\GL(X_2,Y_2))$ are two families of Fredholm operators, then
\begin{equation}\label{additivity}
\ind(L_1\oplus L_2)=\ind(L_1)+\ind(L_2)\in \KO(\Lambda,\Lambda_0).
\end{equation}
\end{enumerate}
\end{proposition}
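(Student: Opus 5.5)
The statement is Proposition~\ref{index-bundle}, which lists four properties of the index bundle. I prove them one at a time, working directly from the definition \eqref{defiindbund} and the independence of the choice of $V$. Throughout, I use that a class $[E_0,E_1,h]$ depends only on $h|_{\Lambda_0}$, together with the rules \eqref{homotopy} and \eqref{sum}.

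\emph{Property (i).} Since every $L_\lambda$ is surjective, we may take $V=\{0\}$ in \eqref{subspace}. Then $E(L,V)$ is the zero bundle, and $\ind(L)=[0,0,0]$ is trivial, because the zero map between zero bundles is an isomorphism. Alternatively, for any admissible $V$, the map $L|_{E(L,V)}$ is fibrewise an isomorphism, so the triple is trivial.

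\emph{Property (iv).} Choose admissible $V_1$ for $L_1$ and $V_2$ for $L_2$. Then $V_1\oplus V_2$ is admissible for $L_1\oplus L_2$. Moreover $(L_1\oplus L_2)^{-1}(V_1\oplus V_2)=L_1^{-1}(V_1)\oplus L_2^{-1}(V_2)$, and the restricted morphism is the direct sum of the two restricted morphisms. The claim then follows from the definition of addition in $\KO(\Lambda,\Lambda_0)$.

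\emph{Property (ii).} Apply \cite[Lemma 2.1]{Wat15} to the compact space $[0,1]\times\Lambda$ to obtain one space $V$ that is transversal to the ranges of all $H(s,\lambda)$. This gives a bundle $E(H,V)$ over $[0,1]\times\Lambda$. Its restrictions $E_s$ to $\{s\}\times\Lambda$ are isomorphic for all $s$, by homotopy invariance of vector bundles over $[0,1]\times\Lambda$ with $\Lambda$ compact. Transporting along such an isomorphism $E_1\cong E_0$ gives a path of morphisms $E_0\to\Theta(V)$ that is invertible over $\Lambda_0$ for each $s$. Then \eqref{homotopy} yields the equality. I expect the bookkeeping of this trivialisation along $[0,1]$ to be routine but fiddly. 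A cleaner route is to pull back to the bundle pair $(\Lambda,\Lambda_0)\times[0,1]$ and use the restriction maps $i_0^*=i_1^*$ on $\KO$, which holds because $i_0$ and $i_1$ are homotopic maps of pairs.

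\emph{Property (iii) is the main obstacle.} I would first reduce to a sum formula and then use (ii) and (iv). Consider the path of operators on $X\oplus Y\to Y\oplus Z$ given by
\[
M_t=\begin{pmatrix}\cos t\, L & -\sin t\, I_Y\\ \sin t\, S L & \cos t\, S\end{pmatrix}, \qquad t\in[0,\pi/2].
\]
More precisely, I would use the standard rotation homotopy that connects $L\oplus S$ with $(S\circ L)\oplus I_Y$ through Fredholm operators which are invertible whenever $L_\lambda$ and $S_\lambda$ are. The invertibility over $\Lambda_0$ along the path must be checked. The natural candidate is the product
\[
\begin{pmatrix}I&0\\0&S\end{pmatrix}R_t\begin{pmatrix}L&0\\0&I\end{pmatrix},
\]
where $R_t$ is the rotation on $Y\oplus Y$. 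This is invertible for $\lambda\in\Lambda_0$, and $R_t$ is invertible for all $t$. Then (ii) and (iv) give $\ind(L)+\ind(S)=\ind(S\circ L)+\ind(I_Y)$, and $\ind(I_Y)=0$ by (i). Should a direct argument be preferred instead, I would choose $W\subset Z$ transversal to all $R(S_\lambda)$, and $V\subset Y$ transversal to all $R(L_\lambda)$ with $V\supset$ a complement-related lift of $W$. I would then build a short exact sequence of bundles
\[
0\to E(L,V)\to E(SL,S(V)+W)\to \cdots
\]
and use \eqref{sum}. The homotopy route avoids that bookkeeping, so I would pursue it first.
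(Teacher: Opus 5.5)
Your proof is correct. It is also more self-contained than the paper, which gives no proof of this proposition: it only says the properties can be found in \cite{indbundleIch} and that most of them follow directly from \eqref{defiindbund}. Your arguments for $(i)$ and $(iv)$ are exactly that direct verification.

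For $(ii)$, what you actually need is more than $E_0\cong E_1$. You need a bundle isomorphism $\varphi:[0,1]\times E_0\to E(H,V)$ over $[0,1]\times\Lambda$ with $\varphi_0=\id$, which the homotopy theorem for bundles over $[0,1]\times\Lambda$ supplies. Then $s\mapsto H(s,\cdot)|\circ\varphi_s$ is a path to which \eqref{homotopy} applies, and \eqref{ISO1} identifies its endpoint classes with $\ind(H(0,\cdot))$ and $\ind(H(1,\cdot))$.

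For $(iii)$, the factorisation $M_t=(I_Y\oplus S)R_t(L\oplus I_Y)$ works. Two small points are missing. First, $M_{\pi/2}(x,y)=(-y,S_\lambda L_\lambda x)$, i.e.\ $M_{\pi/2}=J\circ(SL\oplus(-I_Y))$ with the flip $J:Z\oplus Y\to Y\oplus Z$. Composing any family $T$ with a constant isomorphism $J$ leaves the class unchanged, because $E(JT,J(V))=E(T,V)$ and \eqref{ISO1} applies with $\varphi_1=J|_V$. Second, $\ind(-I_Y)=0$ by $(i)$.

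If one allows a finite-dimensional subbundle of the target in place of a fixed subspace, as in the Banach-bundle setting of \cite{indbundleIch}, there is a shorter proof of $(iii)$. Take $V\subset Z$ with $R(S_\lambda L_\lambda)+V=Z$ for all $\lambda$. Then $W:=E(S,V)\subset\Theta(Y)$ satisfies $R(L_\lambda)+W_\lambda=Y$ and $E(L,W)=E(SL,V)$. So \eqref{sum} gives $\ind(L)+\ind(S)=[E(SL,V),W,L]+[W,\Theta(V),S]=[E(SL,V),\Theta(V),SL]$ directly. Your rotation argument is longer and relies on $(ii)$ and $(iv)$, but it stays entirely within the paper's fixed-subspace definition.

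The short-exact-sequence fallback at the end of your proposal is too vague to stand alone, but you do not need it.
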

In the proof of Theorem \ref{thm-main-lin} we need the following mild generalisation of the definition of the index bundle. If $P:\Lambda\rightarrow\mathcal{L}(X)$ is a continuous map of projections on $X$, i.e., $P^2_\lambda=P_\lambda$, $\lambda\in \Lambda$, then we call the subset
\begin{equation}\label{Banach-P}
X_P=\{(\lambda,u)\in\Lambda\times X:\, P_\lambda u=u\}
\end{equation}
a Banach subbundle of the trivial bundle $\Theta(X)$. For a family $L:\Lambda\to\mathcal L(X,Y)$ we define a bundle morphism
\[
L_P:X_P\to\Theta(Y),\qquad L_P(\lambda,u):=(\lambda,L_\lambda u).
\]
We say that $L_P$ is a Fredholm morphism of index $0$ if
$
L_\lambda\big|_{(X_P)_\lambda}\in \Phi_0\big((X_P)_\lambda,Y\big) \text{ for all }\lambda\in\Lambda,
$
where $(X_P)_\lambda=\{u\in X:\ P_\lambda u=u\}=\mathrm{Im}(P_\lambda)$ is the image of the projection $P_\lambda$. The construction of \eqref{defiindbund} as well as all properties in Proposition \ref{index-bundle} still hold in this slightly more general setting. A thorough discussion of the index bundle for morphisms between general Banach bundles can be found in \cite{indbundleIch}.


\section{The Family Index Theorem}

\subsection{The Differential Operators and their Adjoints}
In this section we focus on the linear systems
\begin{equation}\label{Lin}
\left\{
\begin{array}{l}
\dot{x}= A_\lambda(t)x \\
\lim\limits_{t\to\pm\infty} x(t)=0
\end{array}
\right.
\end{equation}
where our standing assumptions are that $\lambda$ is a parameter in a compact and connected topological space $\Lambda$ and $A:\Lambda\times\mathbb{R}\rightarrow\Mat(d,\mathbb{R})$ is a continuous family of real quadratic matrices of dimension $d\in\mathbb{N}$ such that each $A_\lambda:\mathbb{R}\rightarrow\Mat(d,\mathbb{R})$ is bounded and uniformly continuous. We also formally consider the adjoint equations
\begin{equation}\label{ALin}
\left\{
\begin{array}{l}
\dot{x}=-A^\top_\lambda(t)x \\
\lim\limits_{t\to\pm\infty} x(t)=0.
\end{array}
\right.
\end{equation}
Our main objects of study are the differential operators
\begin{align}\label{L}
L_\lambda:H^1(\mathbb{R},\mathbb{R}^d)\rightarrow L^2(\mathbb{R},\mathbb{R}^d),\quad (L_\lambda x)(t)=\dot{x}(t)-A_\lambda(t)x(t) \text{ for }\lambda\in\Lambda
\end{align}
as well as the adjoint operators given by
\begin{align}\label{DL}
L_\lambda^{\ast}:H^1(\mathbb{R},\mathbb{R}^d)\rightarrow L^2(\mathbb{R},\mathbb{R}^d),\quad (L_\lambda^{\ast} x)(t)=-\dot{x}(t)-A^\top_\lambda(t)x(t) \text{ for }\lambda\in\Lambda,
\end{align}
under the following assumptions:
\begin{enumerate}
\item[$(A1)$]  For each $\lambda\in \Lambda$, the equation $\dot{x}=A_{\lambda}(t)x$ admits an exponential dichotomy on $\R_0^+$ and $\R_0^-$
with respect to the projections $P_{\lambda}^+: \R^d\to \R^d$ and $P_{\lambda}^-:\R^d\to\R^d$, which we assume to be continuous in $\lambda$ according to Theorem \ref{continuous-family-projections}.
\item[$(A2)$]  There is a closed subspace $\emptyset\neq\Lambda_0\subset\Lambda$ such that for each $\lambda\in\Lambda_0$ the equation $\dot{x}=A_{\lambda}(t)x$ has an exponential dichotomy on $\R$.
\end{enumerate}
If not specified otherwise, we always consider the usual scalar products on the Hilbert spaces $L^2(\mathbb{R},\mathbb{R}^d)$ and $H^1(\mathbb{R},\mathbb{R}^d)$. So, in particular, $L_\lambda, L^\ast_\lambda$ are bounded linear operators. The following theorem can be found in \cite[Proposition 3.1]{JacoboAMS} under stronger assumptions. That it actually holds as stated below is explained in \cite[Remark 3.2]{JacoboAMS}; cf. also \cite{Longo}.
\begin{theorem}\label{thm-FredholmIndex}
Under the Assumptions $(A1)$ and $(A2)$ the operators $L_\lambda$ and $L^\ast_\lambda$, $\lambda\in\Lambda$, are Fredholm operators and
\begin{align}\label{index}
\ind L^{\ast}_{\lambda}=\ind L_{\lambda}=\dim N(L_{\lambda})-\dim N(L_{\lambda}^{\ast})
=\dim R(P_{\lambda}^+)-\dim R(P_{\lambda}^-)=0.
\end{align}
\end{theorem}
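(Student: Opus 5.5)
We follow the standard route of Palmer's theorem, adapted to $L^2$/$H^1$. We first identify the kernels of $L_\lambda$ and $L^\ast_\lambda$, then show that the ranges are closed and equal to annihilators of the opposite kernels, and finally count dimensions using the dichotomy projectors.

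\textbf{Step 1 (kernels).} Fix $\lambda\in\Lambda$. Any $x\in N(L_\lambda)\subset H^1$ is a classical solution of $\dot x=A_\lambda x$ with $x(t)\to0$ as $t\to\pm\infty$, since $H^1$ functions vanish at infinity. By $(A1)$ the initial value $x(0)$ lies in $R(P^+_\lambda(0))\cap N(P^-_\lambda(0))=E^s_\lambda(0)\cap E^u_\lambda(0)$. Conversely, the dichotomy estimates give exponential decay of such solutions on both half lines, so they belong to $H^1$. Hence
\[
\dim N(L_\lambda)=\dim\bigl(E^s_\lambda(0)\cap E^u_\lambda(0)\bigr).
\]
Using \eqref{dual-operator}, the same argument applied to $L^\ast_\lambda$ gives
\[
\dim N(L^\ast_\lambda)=\dim\bigl(E^s_\lambda(0)^\perp\cap E^u_\lambda(0)^\perp\bigr)=\dim\bigl((E^s_\lambda(0)+E^u_\lambda(0))^\perp\bigr).
\]

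\textbf{Step 2 (closed range, and the main obstacle).} We show that $R(L_\lambda)=N(L^\ast_\lambda)^{\perp}$ in $L^2$. Given $f\in L^2$, solve $\dot x=A_\lambda x+f$ separately on each half line using Green's functions built from the dichotomies:
\begin{align*}
x_+(t)&=\Phi(t,0)\xi_++\int_0^t\Phi(t,s)P^+(s)f(s)\,ds-\int_t^\infty\Phi(t,s)(I-P^+(s))f(s)\,ds,\quad t\ge0,\\
x_-(t)&=\Phi(t,0)\xi_-+\int_{-\infty}^t\Phi(t,s)P^-(s)f(s)\,ds-\int_t^0\Phi(t,s)(I-P^-(s))f(s)\,ds,\quad t\le0,
\end{align*}
with $\xi_+\in E^s(0)$ and $\xi_-\in E^u(0)$. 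Young's inequality, applied to the exponentially decaying kernels, gives $x_\pm\in L^2$. Then $\dot x_\pm=A_\lambda x_\pm+f\in L^2$, because $A_\lambda$ is bounded, so $x_\pm\in H^1$.

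The two pieces glue to a global solution if and only if the jump $x_+(0)-x_-(0)$ lies in $E^s(0)+E^u(0)$. Since $\mathbb R^d$ splits into $E^s(0)+E^u(0)$ and its orthogonal complement, this condition is that the jump is orthogonal to $(E^s(0)+E^u(0))^\perp$. Integrating by parts against the adjoint solutions from Step~1 (variation of constants) shows this jump condition is equivalent to $\langle f,y\rangle_{L^2}=0$ for all $y\in N(L^\ast_\lambda)$. This bookkeeping of the jump is the step we expect to be delicate. The operator $L^\ast_\lambda$ is handled symmetrically, with the roles of the dichotomies exchanged via \eqref{dual-operator}.

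\textbf{Step 3 (index count).} By Step~2, $\dim\operatorname{coker}L_\lambda=\dim N(L^\ast_\lambda)$, so $L_\lambda$ is Fredholm with $\ind L_\lambda=\dim N(L_\lambda)-\dim N(L^\ast_\lambda)$. Write $a=\dim E^s_\lambda(0)=\dim R(P^+_\lambda)$ and $b=\dim E^u_\lambda(0)=\dim N(P^-_\lambda)$. Linear algebra gives
\[
\dim(E^s\cap E^u)-\bigl(d-\dim(E^s+E^u)\bigr)=a+b-d=\dim R(P^+_\lambda)-\dim R(P^-_\lambda).
\]
By Step~2 applied to the adjoint, $L^\ast_\lambda$ has kernel and cokernel interchanged, so $\ind L^\ast_\lambda=-\ind L_\lambda$. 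Moreover, $\dim R(P^\pm_\lambda)$ is constant in $\lambda$ because $\Lambda$ is connected and Theorem~\ref{continuous-family-projections} makes the projector families continuous. By $(A2)$ and Theorem~\ref{thm-invertible}, some $\lambda\in\Lambda_0$ satisfies $E^s\oplus E^u=\mathbb R^d$, so $a+b=d$ there (this is \eqref{dimform}). Hence the index vanishes for every $\lambda$, which also gives $\ind L^\ast_\lambda=\ind L_\lambda=0$.
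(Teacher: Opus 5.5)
Your proof is correct. The paper gives no argument of its own here. It cites Pejsachowicz's Proposition 3.1 for the asymptotically hyperbolic case, together with his Remark 3.2 and Longo--P\"otzsche--Skiba for the extension to dichotomies on the two half-lines; that is, it relies on the Palmer-type Fredholm theorem for dichotomic systems. Your half-line Green's-function construction is a self-contained proof of that result in the $H^1\to L^2$ setting.

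The step you flag as delicate does close. For $\eta\in(E^s_\lambda(0)+E^u_\lambda(0))^\perp$, the function $y(s)=\Phi_\lambda(0,s)^\top\eta$ runs through $N(L^\ast_\lambda)$ as $\eta$ varies. Invariance of the projectors, together with $P^+_\lambda(0)^\top\eta=0$ and $P^-_\lambda(0)^\top\eta=\eta$, gives $\langle x_-(0)-x_+(0),\eta\rangle=\langle f,y\rangle_{L^2}$ for the choice $\xi_\pm=0$. The inclusion $R(L_\lambda)\subset N(L^\ast_\lambda)^\perp$ is just integration by parts, so you only need the sufficiency half of the gluing criterion.

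Your final step is exactly how the paper obtains \eqref{dimform}: the continuous projector families have constant rank on the connected space $\Lambda$, and $E^u_\lambda(0)\oplus E^s_\lambda(0)=\R^d$ at one point of $\Lambda_0$. You also correctly note that in general $\ind L^\ast_\lambda=-\ind L_\lambda$, so the first equality in \eqref{index} holds only because both indices vanish. What your route gains over the citation is a direct proof of $R(L_\lambda)=N(L^\ast_\lambda)^\perp$. The paper instead extracts this identity afterwards from \eqref{index} with the help of Lemma~\ref{lemma-adjointcrit}.
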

Above we have called $L^\ast_\lambda$ the adjoint operator of $L_\lambda$, which we now want to justify. Thus we consider for a moment $L_\lambda$ and $L^\ast_\lambda$ as densely defined unbounded operators
\[
L_\lambda:\mathcal{D}(L_\lambda)\subset L^2(\mathbb{R},\mathbb{R}^d)\rightarrow L^2(\mathbb{R},\mathbb{R}^d)
\]
and
\[
L^\ast_\lambda:\mathcal{D}(L^\ast_\lambda)\subset L^2(\mathbb{R},\mathbb{R}^d)\rightarrow L^2(\mathbb{R},\mathbb{R}^d),
\]
where $\mathcal{D}(L_\lambda)=\mathcal{D}(L^\ast_\lambda)=H^1(\mathbb{R},\mathbb{R}^d)$. The following result in operator theory is probably well-known, but we are not aware of any reference in the literature.

\begin{lemma}\label{lemma-adjointcrit}
Let $H$ be a Hilbert space and $T:\mathcal{D}(T)\subset H\rightarrow H$, $S:\mathcal{D}(S)\subset H\rightarrow H$ densely defined operators. If
\begin{enumerate}[$(i)$]
 \item $\langle Tu,v\rangle=\langle u,Sv\rangle$ for all $u\in\mathcal{D}(T)$, $v\in\mathcal{D}(S)$,
 \item $T$ and $S$ are Fredholm operators of index $0$,
 \item $\dim N(T)=\dim N(S)$,
\end{enumerate}
then $S=T^\ast$.
\end{lemma}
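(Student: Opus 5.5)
The idea is to show first the easy inclusion $S\subset T^\ast$, and then to use the index-zero and kernel-dimension hypotheses to upgrade it to equality by comparing kernels and ranges. Throughout, "Fredholm" for a densely defined operator means finite-dimensional kernel and closed range of finite codimension. Index $0$ then reads $\dim N(T)=\codim R(T)$ and $\dim N(S)=\codim R(S)$.

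\emph{Step 1: $S\subset T^\ast$.} Let $v\in\mathcal{D}(S)$. By $(i)$, the functional $u\mapsto\langle Tu,v\rangle=\langle u,Sv\rangle$ is bounded on $\mathcal{D}(T)$. Hence $v\in\mathcal{D}(T^\ast)$ and $T^\ast v=Sv$. Consequently $N(S)\subset N(T^\ast)$ and $R(S)\subset R(T^\ast)$.

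\emph{Step 2: kernels coincide.} For any densely defined $T$ we have $N(T^\ast)=R(T)^\perp$. Since $R(T)$ is closed of finite codimension, $\dim N(T^\ast)=\codim R(T)=\dim N(T)$, using index $0$. By $(iii)$ this equals $\dim N(S)$. Together with $N(S)\subset N(T^\ast)$ and finite dimensionality, we get $N(S)=N(T^\ast)$.

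\emph{Step 3: ranges coincide.} For $u\in N(T)$ and $v\in\mathcal{D}(T^\ast)$ we have $\langle T^\ast v,u\rangle=\langle v,Tu\rangle=0$, so $R(T^\ast)\subset N(T)^\perp$. This gives $\codim R(T^\ast)\geq\dim N(T)$. On the other hand, $R(S)\subset R(T^\ast)$ gives $\codim R(S)\geq\codim R(T^\ast)$. Using index $0$ for $S$ and $(iii)$, we have $\codim R(S)=\dim N(S)=\dim N(T)$. The chain of inequalities therefore collapses:
\[
\dim N(T)=\codim R(S)\geq\codim R(T^\ast)\geq \dim N(T).
\]
So $R(S)\subset R(T^\ast)$ are nested subspaces of equal finite (algebraic) codimension, and hence $R(S)=R(T^\ast)$.

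\emph{Step 4: conclusion.} Let $v\in\mathcal{D}(T^\ast)$. By Step 3 there is $w\in\mathcal{D}(S)$ with $T^\ast v=Sw=T^\ast w$. Then $v-w\in N(T^\ast)=N(S)\subset\mathcal{D}(S)$, so $v\in\mathcal{D}(S)$. Thus $\mathcal{D}(T^\ast)=\mathcal{D}(S)$, and by Step 1 we conclude $S=T^\ast$.

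The main point to watch is that no closedness of $T$ or closed range theorem is needed. The only facts used about $T^\ast$ are $N(T^\ast)=R(T)^\perp$ and $R(T^\ast)\subset N(T)^\perp$, both of which hold for arbitrary densely defined operators. The only remaining care is the dimension bookkeeping in Step 3.
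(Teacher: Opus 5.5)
Your proof is correct, but it is organised differently from the paper's. Both arguments start from $S\subset T^\ast$. The paper then proves $N(T)=R(S)^\perp$, and with the roles swapped $N(S)=R(T)^\perp$. It shows $\mathcal{D}(T^\ast)\subset\mathcal{D}(S)$ by hand: it splits $T^\ast u=w_1+Su_1$ along $H=R(S)\oplus N(T)$, splits test vectors $v\in\mathcal{D}(T)$ in the same way, and checks that $u-u_1\perp R(T)$. You instead compare $S$ with its extension $T^\ast$ directly. Once $N(S)=N(T^\ast)$ and $R(S)=R(T^\ast)$ are established by dimension counting, the conclusion is just the general fact that an extension with the same kernel and the same range coincides with the original operator. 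The identity $N(T^\ast)=R(T)^\perp$ you need comes for free from the definition of the adjoint, so the test-vector computation disappears, and the paper's component $w_1$ is seen to vanish. Your bookkeeping also only uses algebraic codimensions. You therefore never need $R(S)$ to be closed or $\mathcal{D}(S)$ to be dense, whereas the paper uses closedness of $R(S)$ to write $H=R(S)\oplus N(T)$. Even the closedness of $R(T)$ in your Step 2 can be dropped: since $R(T)\cap R(T)^\perp=\{0\}$, you already get $\dim N(T^\ast)=\dim R(T)^\perp\leq\codim R(T)=\dim N(S)$, and together with $N(S)\subset N(T^\ast)$ this suffices.
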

\begin{proof}
Let us first recall that the adjoint $T^{*}$ of $T$ is defined by
\[
D(T^{*})=\{v\in H:\ \exists\, w\in H \ \text{such that}\ \langle Tu,v\rangle=\langle u,w\rangle\ \forall u\in D(T)\},
\qquad T^{*}v:=w.
\]
Assumption \textup{(i)} yields for every $v\in D(S)$ the identity
\[
\langle Tu,v\rangle=\langle u,Sv\rangle \quad \forall u\in D(T),
\]
which shows that $v\in D(T^{*})$ and $T^{*}v=Sv$. Thus $S\subset T^{*}$ and it remains to show the opposite inclusion.\\
Clearly, $(i)$ implies that $N(T)\subset R(S)^\perp$. As $T,S$ are Fredholm operators of the same index,
\[
\dim N(T)-\dim R(T)^\perp=\dim N(S)-\dim R(S)^\perp
\]
and so by $(iii)$
\[0=\dim N(T)-\dim N(S)=\dim R(T)^\perp-\dim R(S)^\perp.\]
Thus, as the Fredholm index is $0$,
\[
\dim R(S)^\perp=\dim R(T)^\perp=\dim N(T)
\]
and we see for later reference that under the given assumptions
\begin{align}\label{kerimTSperp}
N(T)=R(S)^\perp.
\end{align}
Let now $u\in\mathcal{D}(T^\ast)$ and set $w=T^\ast u$. Then
\begin{align}\label{langlerangle}
\langle u,Tv\rangle=\langle w,v\rangle,\quad v\in\mathcal{D}(T).
\end{align}
As $R(S)$ is closed, \eqref{kerimTSperp} implies that there are $w_1\in N(T)$ and $u_1\in\mathcal{D}(S)$ such that $w=w_1+Su_1$. It follows from \eqref{langlerangle} and $(i)$ that for $v\in\mathcal{D}(T)$
\begin{align}\label{langlerangleII}
\langle u-u_1,Tv\rangle=\langle u,Tv\rangle-\langle u_1,Tv\rangle=\langle w,v\rangle-\langle Su_1,v\rangle=\langle w-Su_1,v\rangle=\langle w_1,v\rangle.
\end{align}
Once again as $R(S)$ is closed, any $v\in\mathcal{D}(T)$ can be written by \eqref{kerimTSperp} as $v=v_1+v_2$ for some $v_1\in N(T)$ and $v_2\in R(S)$. Actually, $v_2\in R(S)\cap\mathcal{D}(T)$ as $v,v_1\in\mathcal{D}(T)$, and thus we obtain from \eqref{langlerangleII}
\begin{align*}
\langle u-u_1,Tv\rangle=\langle u-u_1,Tv_2\rangle=\langle w_1,v_2\rangle=0,
\end{align*}
where we have used \eqref{kerimTSperp} once again. Consequently, $u-u_1\in R(T)^\perp=N(S)\subset\mathcal{D}(S)$ which shows $u\in\mathcal{D}(S)$ since $u_1\in\mathcal{D}(S)$. Note that here we have swapped $T$ and $S$ in \eqref{kerimTSperp}, which is possible as both operators satisfy identical assumptions.\\
Hence we have shown $\mathcal{D}(T^\ast)\subset\mathcal{D}(S)$, which finally implies $S=T^\ast$ as claimed.
\end{proof}
Now it follows from \eqref{index} and integration by parts that $L^\ast_\lambda$ indeed is the Hilbert space adjoint of $L_\lambda$ when these operators are considered as unbounded operators on $L^2(\mathbb{R},\mathbb{R}^d)$. This yields important information as, e.g., that the kernel of $L^\ast_\lambda$ is the orthogonal complement of the range of the Fredholm operator $L_\lambda$, which will be needed in the proof of our main theorem below. Let us point out that we will make use of the abstract Lemma \ref{lemma-adjointcrit} below once again.
\subsection{Index Theorem and Corollaries}
We consider the family of differential equations \eqref{Lin} under the Assumptions $(\mathrm{A}1)$ and $(\mathrm{A}2)$, where as before $\Lambda$ is a compact and connected topological space. By our discussion in Section \ref{section-expdich}, the corresponding families $E^s_\lambda(0)$, $E^u_\lambda(0)$ for $\lambda\in\Lambda$ yield vector bundles $E^s(0)$ and $E^u(0)$ over the parameter space $\Lambda$ that are both subbundles of the product bundle $\Theta(\mathbb{R}^d)$. Thus, according to our explanations in the beginning of Section \ref{section-vb}, the direct sum  $E^u(0)\oplus E^s(0)$ is a subbundle of $\Theta(\mathbb{R}^{2d})$. Now, the latter bundle can be mapped to $\Theta(\mathbb{R}^d)$ by the bundle homomorphism $(\lambda,u,v)\mapsto (\lambda,u-v)$ for $u,v\in\mathbb{R}^d$, and thus we obtain by restriction a bundle morphism $\mathcal{L}:E^u(0)\oplus E^s(0)\rightarrow\Theta(\mathbb{R}^d)$ defined by $\mathcal{L}(\lambda,u,v)=(\lambda,u-v)$. Note that, as $\dim(E^u(0)\oplus E^s(0))=d$ by \eqref{dimform}, $\mathcal{L}_\lambda$ is an isomorphism if and only if $E^u_\lambda(0)\cap E^s_\lambda(0)=\{0\}\subset\mathbb{R}^d$, which is the case if and only if the operator $L_\lambda$ in \eqref{L} is injective. As $L_\lambda$ is Fredholm of index $0$, this is equivalent to $L_\lambda$ being an isomorphism.\\
Note that Assumption $(A2)$ implies by Theorem \ref{thm-invertible} that
$E^u_\lambda(0)\cap E^s_\lambda(0)=\{0\}\subset\mathbb{R}^d$ for all $\lambda\in\Lambda_0$, which hence shows that the Fredholm operator $L_\lambda$ as well as the bundle morphism $\mathcal{L}_\lambda$ are isomorphisms for all these $\lambda$. Now we can state the main result of this article.
\begin{theorem}\label{thm-main-lin}
If Assumptions $(\mathrm{A}1)$ and $(\mathrm{A}2)$ hold, then
\begin{align}\label{indexformula}
\ind(L)=[E^u(0)\oplus E^s(0),\Theta(\mathbb{R}^d),\mathcal{L}]\in \KO(\Lambda,\Lambda_0),
\end{align}
where the homomorphism $\mathcal{L}:E^u(0)\oplus E^s(0)\rightarrow\Theta(\mathbb{R}^d)$ is defined by $\mathcal{L}(\lambda,u,v)=(\lambda,u-v)$.
\end{theorem}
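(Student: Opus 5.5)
The plan is to compute $\ind(L)$ directly from the definition \eqref{defiindbund}. We choose a finite-dimensional subspace $V\subset L^2(\R,\R^d)$ transversal to all ranges $R(L_\lambda)$ that is built from compactly supported functions concentrated near $t=0$. The kernel bundle $E(L,V)$ can then be identified with initial data at $t=0$ via the variation of constants formula.

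\textbf{Step 1: a convenient transversal space.} Fix a bump function $\phi\in C_c^\infty((-1,1))$ with $\int\phi=1$ and set $V=\{\phi(\cdot)\,\xi:\ \xi\in\R^d\}$, so $\dim V=d$. We claim $R(L_\lambda)+V=L^2$ for every $\lambda$. By the adjoint discussion after Lemma \ref{lemma-adjointcrit}, we have $R(L_\lambda)^\perp=N(L_\lambda^\ast)$. Hence it suffices that no nonzero $y\in N(L^\ast_\lambda)$ is orthogonal to $V$, i.e.\ that $\int\phi(t)y(t)\,dt\neq 0$ whenever $y\neq0$. This fails in general for a fixed $\phi$. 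I would therefore instead take $V_\lambda$ depending on $\lambda$, or more robustly let $V$ be spanned by $\phi_k(\cdot)e_j$ for a family of bumps $\phi_k$ approximating $\delta_0$. By compactness and the dual characterisation $y(0)\in E^s_\lambda(0)^\perp\cap E^u_\lambda(0)^\perp$, with $y\neq0\Rightarrow y(0)\neq0$, a sufficiently concentrated bump works uniformly in $\lambda$. In this case $V=\phi\R^d$ with $\phi$ close to $\delta_0$ suffices, since $\int\phi y\approx y(0)\neq0$ uniformly on the compact set of normalised kernel elements.

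\textbf{Step 2: identify $E(L,V)$ and deform.} For $\xi\in\R^d$, solutions of $\dot x-A_\lambda x=\phi\xi$ in $H^1$ are $x(t)=\Phi_\lambda(t,0)x_0+\int_0^t\Phi_\lambda(t,s)\phi(s)\xi\,ds$. Decaying at $+\infty$ forces $\Phi_\lambda(t,0)\big(x_0+\int_0^1\Phi_\lambda(0,s)\phi(s)\xi\,ds\big)$ to have its initial value in $E^s_\lambda(0)$. Similarly, decaying at $-\infty$ forces $x_0-\int_{-1}^0\Phi_\lambda(0,s)\phi(s)\xi\,ds\in E^u_\lambda(0)$. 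So $E(L,V)_\lambda\cong\{(u,v)\in E^u_\lambda(0)\oplus E^s_\lambda(0)\}$ via $u=x_0-a_\lambda\xi$, $v=x_0+b_\lambda\xi$, and $L$ corresponds to $(u,v)\mapsto\xi$ where $(b_\lambda+a_\lambda)\xi=v-u$. Here $a_\lambda+b_\lambda=\int_{-1}^1\Phi_\lambda(0,s)\phi(s)\,ds=:M_\lambda$ is invertible and close to $I_d$ for concentrated $\phi$. Thus, up to the bundle isomorphism $(u,v)\mapsto x_0$-data and the isomorphism $M_\lambda^{-1}$ of $\Theta(\R^d)$, the morphism $L|_{E(L,V)}$ is $-\mathcal{L}$. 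The sign is irrelevant because $-\mathcal L$ is homotopic to $\mathcal L$ through $e^{i\pi t}$-type rotations; alternatively use $(E,F,h)=(E,F,-h)$ as noted in the proof of Proposition \ref{beta-iso}. The isomorphism $M^{-1}$ also drops out, since $M_\lambda$ is homotopic to $I_d$ through invertibles by shrinking the support of $\phi$, together with \eqref{homotopy}.

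\textbf{Main obstacle.} The delicate point is Step 1 together with continuity: we must verify that $E(L,V)$ with the explicit parametrisation by $(u,v)$ is a genuine bundle isomorphism onto $E^u(0)\oplus E^s(0)$, continuous in $\lambda$. This relies on Theorem \ref{continuous-family-projections}, and we must check that the isomorphism classes in $L(\Lambda,\Lambda_0)$ match, since only commutativity over $\Lambda_0$ is required. If uniform transversality via a single bump is problematic, I would fall back on a larger $V$ spanned by finitely many bumps and reduce by the independence of $\ind(L)$ from $V$ \cite[Thm. 3]{indbundleIch}. The extra directions then contribute a trivial summand $[\Theta,\Theta,\mathrm{iso}]$.
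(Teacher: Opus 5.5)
Your argument is correct, but it is organised quite differently from the paper's.

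\textbf{How the paper proceeds.} The paper never chooses a transversal space on the whole line.
\begin{enumerate}
\item It passes to the operators $L^0_\lambda$ on $[-\tau_0,\tau_0]$ with boundary conditions $x(-\tau_0)\in E^u_\lambda(-\tau_0)$ and $x(\tau_0)\in E^s_\lambda(\tau_0)$.
\item It proves $\ind(L)=\ind(L_P)$, using transversal spaces built from kernels of adjoints and a block-triangular homotopy. This is where Lemma \ref{lemma-adjointL0} and the index bundle for Banach subbundles enter.
\item It conjugates by $\Phi_\lambda(t,0)$ to obtain $Q_\lambda=\frac{d}{dt}$ with boundary conditions in $E^u_\lambda(0)$ and $E^s_\lambda(0)$.
\item Only then does it compute with the constants as transversal space. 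The kernel bundle consists of the linear interpolants between $a\in E^u_\lambda(0)$ and $b\in E^s_\lambda(0)$.
\end{enumerate}

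\textbf{How your route differs.} You do the analogous computation in one stroke on $\R$, with a $\lambda$-independent $V=\phi\R^d$. The only price is the matrix $M_\lambda=\int\phi(s)\Phi_\lambda(0,s)\,ds$. It is absorbed directly by the triple isomorphism with $\varphi_0$ your parametrisation and $\varphi_1=-\iota\circ M^{-1}$ in \eqref{ISO1}, where $\iota\xi=\phi\xi$; no homotopy is needed.

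\textbf{Your two worries are easily settled.}
\begin{itemize}
\item \emph{Transversality.} For $y\in N(L^\ast_\lambda)$ one has $y(t)=\Phi_\lambda(0,t)^\top y(0)$, hence $\int\phi y=M_\lambda^\top y(0)$. So a single bump works as soon as $\|M_\lambda-I_d\|<1$ on all of $\Lambda$. Gronwall and the boundedness of $A$ on $\Lambda\times[-1,1]$ give this for $\phi$ concentrated near $0$, so the multi-bump fallback is unnecessary. Alternatively, your Step 2 yields $\dim L_\lambda^{-1}(V)=d=\dim V$ by \eqref{dimform}. For index-$0$ Fredholm operators this is equivalent to transversality, so you do not even need the adjoint identification.
\item \emph{Continuity.} Work with the inverse map $E(L,V)\to E^u(0)\oplus E^s(0)$, $(\lambda,x)\mapsto(\lambda,x(0)-a_\lambda\xi,x(0)+b_\lambda\xi)$ with $\phi\xi=L_\lambda x$. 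It is manifestly continuous, since evaluation at $0$ is bounded on $H^1$. It is also fibrewise bijective, hence a bundle isomorphism. So no $\lambda$-uniform tail estimates are required, whereas the extension maps $i_\lambda$ in the paper's Step II do need such control.
\end{itemize}

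\textbf{What each approach buys.} Your route uses only the definition \eqref{defiindbund} and elementary ODE facts. The paper's route buys general-purpose reduction steps: restriction to a bounded interval with moving boundary conditions, and conjugation to $\frac{d}{dt}$. These are reusable beyond this explicit ODE setting.

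\textbf{One incorrect justification.} In your Step 2, the claim that $\mathcal{L}$ and $-\mathcal{L}$ are homotopic through ``$e^{i\pi t}$-rotations'' is wrong in real $\KO$-theory. For odd $d$, at any $\lambda\in\Lambda_0$ the fibre maps $\mathcal{L}_\lambda$ and $-\mathcal{L}_\lambda$ differ by $\det(-\id)=-1$. They therefore lie in different path components of the isomorphisms $E^u_\lambda(0)\oplus E^s_\lambda(0)\to\R^d$, so no homotopy as in \eqref{homotopy} connects them. This is harmless, because your alternative is the correct argument: $(E,F,h)\cong(E,F,-h)$ via $\varphi_0=\id$, $\varphi_1=-\id$ in \eqref{ISO1}.
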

Before we prove this theorem, let us note a couple of corollaries for later reference. For the first one, we need the following additional assumption.
\begin{itemize}
 \item[(A3)] There are continuous maps $s_1,\ldots,s_d:\Lambda\rightarrow\R^d$ such that
 \[E^u_\lambda(0)=\spann\{s_1(\lambda),\ldots,s_k(\lambda)\},\quad E^s_\lambda(0)=\spann\{s_{k+1}(\lambda),\ldots,s_d(\lambda)\},\quad\lambda\in\Lambda,\]
 for some $1\leq k\leq d-1$. 
\end{itemize} 
Note that $(A3)$ is equivalent to the assumption that the vector bundles $E^u(0)$ and $E^s(0)$ over $\Lambda$ are trivial, i.e., there are vector bundle isomorphisms $\varphi^u:E^u(0)\rightarrow\Theta(\R^k)$ and $\varphi^s:E^s(0)\rightarrow\Theta(\R^{d-k})$. 
\begin{corollary}\label{cor-main-triv}
If $(A1)$, $(A2)$ and $(A3)$ hold, then 
\begin{align*}
\ind(L)=[\Theta(\R^d),\Theta(\R^d),\mathcal{M}]\in \KO(\Lambda,\Lambda_0),
\end{align*}
where $\mathcal{M}:\Lambda\rightarrow\Mat(d,\R)$ is the matrix family $\mathcal{M}_\lambda=(s_1(\lambda),\ldots,s_d(\lambda))$.
\end{corollary}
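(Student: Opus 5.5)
We derive Corollary \ref{cor-main-triv} from Theorem \ref{thm-main-lin} by replacing the bundles $E^u(0)$, $E^s(0)$ with product bundles and then reading off the morphism in coordinates. By $(A3)$, for each $\lambda$ the vectors $s_1(\lambda),\ldots,s_k(\lambda)$ form a basis of $E^u_\lambda(0)$. Indeed, they span a space of dimension $\dim E^u_\lambda(0)$, which is constant, and $k$ must equal this dimension. Similarly, $s_{k+1}(\lambda),\ldots,s_d(\lambda)$ form a basis of $E^s_\lambda(0)$; here we use \eqref{dimform}, which applies since $\Lambda_0\neq\emptyset$.

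Hence we can define bundle isomorphisms
\[\varphi^u:\Theta(\R^k)\rightarrow E^u(0),\quad (\lambda,a)\mapsto \Big(\lambda,\sum_{i=1}^k a_is_i(\lambda)\Big),\]
\[\varphi^s:\Theta(\R^{d-k})\rightarrow E^s(0),\quad (\lambda,b)\mapsto \Big(\lambda,\sum_{j=1}^{d-k} b_js_{k+j}(\lambda)\Big).\]
Both maps are continuous, fibrewise linear and fibrewise bijective, so they are isomorphisms. Let $\varphi:=\varphi^u\oplus\varphi^s:\Theta(\R^d)\rightarrow E^u(0)\oplus E^s(0)$. The triple $\{E^u(0)\oplus E^s(0),\Theta(\R^d),\mathcal{L}\}$ is then isomorphic, in the sense of \eqref{ISO1} (using $\varphi$ and the identity on $\Theta(\R^d)$), to $\{\Theta(\R^d),\Theta(\R^d),\mathcal{L}\circ\varphi\}$. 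So by Theorem \ref{thm-main-lin},
\[\ind(L)=[\Theta(\R^d),\Theta(\R^d),\mathcal{L}\circ\varphi].\]

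Next we compute $\mathcal{L}\circ\varphi$ explicitly:
\[(\mathcal{L}\circ\varphi)_\lambda(a,b)=\sum_{i=1}^k a_is_i(\lambda)-\sum_{j=1}^{d-k}b_js_{k+j}(\lambda)=\mathcal{M}_\lambda D(a,b),\]
where $D=\mathrm{diag}(I_k,-I_{d-k})$. By \eqref{sum}, this gives
\[\ind(L)=[\Theta(\R^d),\Theta(\R^d),D]+[\Theta(\R^d),\Theta(\R^d),\mathcal{M}].\]
Here $\mathcal{M}_\lambda$ is invertible on $\Lambda_0$ because $\mathcal{L}_\lambda$ is, so the second class is well defined. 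The constant isomorphism $D$ gives a trivial element, so the first class is zero and the claim follows.

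The only point requiring care is this last step. The constant family $D$ is an isomorphism on all of $\Lambda$, so its class is trivial by definition. One must just make sure that the factorisation $\mathcal{L}\circ\varphi=\mathcal{M}\circ D$ respects the order of composition required in \eqref{sum}. Alternatively, one could use $(E,F,h)=(E,F,-h)$ and homotopy invariance \eqref{homotopy}, but the additivity argument via \eqref{sum} is cleaner.
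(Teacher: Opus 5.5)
Your proof is correct and follows the paper's route: you trivialise $E^u(0)\oplus E^s(0)$ by the $s_i$, transport $\mathcal{L}$ along this trivialisation using \eqref{ISO1}, and read off the resulting matrix. The only difference is cosmetic. The paper sets $\varphi(\lambda,e_i)=-s_i(\lambda)$ for $k+1\le i\le d$, so that $\mathcal{L}\circ\varphi=\mathcal{M}$ holds exactly, and your extra step of splitting off the trivial class $[\Theta(\R^d),\Theta(\R^d),D]=0$ via \eqref{sum} becomes unnecessary; your use of \eqref{dimform} to justify that the $s_i$ are linearly independent is a detail the paper leaves implicit.
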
 
\begin{proof}
The maps $s_1,\ldots,s_d:\Lambda\rightarrow\R^d$ in $(A3)$ induce a bundle isomorphism $\varphi:\Theta(\R^d)\rightarrow E^u(0)\oplus E^s(0)$ by $\varphi(\lambda,e_i)=s_i(\lambda)$, $1\leq i\leq k$ and $\varphi(\lambda,e_i)=-s_i(\lambda)$, $k+1\leq i\leq d$  where $e_i$ denotes the i-th standard basis vector in $\R^d$. Now by Theorem \ref{thm-main-lin} and \eqref{ISO1},
\[\ind(L)=[E^u(0)\oplus E^s(0),\Theta(\mathbb{R}^d),\mathcal{L}]=[\Theta(\R^d),\Theta(\mathbb{R}^d),\mathcal{L}\circ\varphi]\in \KO(\Lambda,\Lambda_0)\]
and $(\mathcal{L}\circ\varphi)_\lambda=\mathcal{M}_\lambda$, $\lambda\in\Lambda$.
\end{proof}
In what follows, we denote under assumption $(A3)$ by $\mathcal{L}_D:\Lambda\rightarrow\R$ the map 

\[\mathcal{L}_D(\lambda)=\det(\mathcal{M}_\lambda)=\det(s_1(\lambda),\ldots, s_d(\lambda)).\]
Note that $\mathcal{L}_D(\lambda)=0$ if and only if \eqref{lin-Hamilton} has a non-trivial solution, which reminds of determinant sections for families of Dirac operators (cf. \cite[\S 9.7]{Getzler}).\\
The assumption $(A3)$ in particular holds if $\Lambda$ is a contractible topological space as in this case all vector bundles over $\Lambda$ are trivial. If in addition $\Lambda_0=\{\lambda_0,\lambda_1\}$ consists of two distinct points in $\Lambda$, we obtain $\ind(L)$ as element of $\mathbb{Z}_2$ under the isomorphism $\psi_{\lambda_0,\lambda_1}$ from Proposition \ref{beta-iso}.
\begin{corollary}\label{cor-main-contractible}
Let $\Lambda$ be contractible and $\Lambda_0=\{\lambda_0,\lambda_1\}$ for two distinct points $\lambda_0, \lambda_1\in\Lambda$. If $(A1)$ and $(A2)$ hold, then $\ind(L)\in \KO(\Lambda,\Lambda_0)\cong\mathbb{Z}_2$ is non-trivial if and only if
\[\mathcal{L}_D(\lambda_0)\cdot\mathcal{L}_D(\lambda_1)<0.\]
\end{corollary}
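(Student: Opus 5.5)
Since $\Lambda$ is contractible, $(A3)$ holds: the bundles $E^u(0)$ and $E^s(0)$ are trivial, so we may choose continuous frames $s_1,\ldots,s_d:\Lambda\rightarrow\R^d$ as in $(A3)$. By Corollary \ref{cor-main-triv} we then have
\[\ind(L)=[\Theta(\R^d),\Theta(\R^d),\mathcal{M}]\in\KO(\Lambda,\Lambda_0),\qquad \mathcal{M}_\lambda=(s_1(\lambda),\ldots,s_d(\lambda)).\]
This representative is exactly of the form required in Proposition \ref{beta-iso}. Note that $\mathcal{M}_\lambda$ is invertible for $\lambda\in\Lambda_0$, since $E^u_\lambda(0)\oplus E^s_\lambda(0)=\R^d$ there by $(A2)$ and Theorem \ref{thm-invertible}.

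Next we apply the isomorphism $\psi_{\lambda_0,\lambda_1}:\KO(\Lambda,\Lambda_0)\rightarrow\Z_2$ from Proposition \ref{beta-iso}. Formula \eqref{beta} gives
\[(-1)^{\psi_{\lambda_0,\lambda_1}(\ind(L))}=\sgn\det\mathcal{M}_{\lambda_0}\cdot\sgn\det\mathcal{M}_{\lambda_1}=\sgn\big(\mathcal{L}_D(\lambda_0)\,\mathcal{L}_D(\lambda_1)\big).\]
Both factors are nonzero because $\mathcal{M}_{\lambda_0}$ and $\mathcal{M}_{\lambda_1}$ are invertible. Since $\psi_{\lambda_0,\lambda_1}$ is an isomorphism, $\ind(L)\neq0$ if and only if $\psi_{\lambda_0,\lambda_1}(\ind(L))=1$. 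This holds if and only if the product $\mathcal{L}_D(\lambda_0)\mathcal{L}_D(\lambda_1)$ is negative, which is the claim.

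The only point needing care is that the criterion should not depend on the choice of frames $s_i$, although $\mathcal{L}_D$ itself does. The cleanest justification is that the left-hand side $\ind(L)$ is intrinsic, so the sign of the product is determined by it. One can also check this directly. Another frame changes $\mathcal{M}_\lambda$ to $\mathcal{M}_\lambda G_\lambda$ with $G:\Lambda\rightarrow\GL(d,\R)$ continuous and block diagonal. Since $\Lambda$ is connected, $\sgn\det G_\lambda$ is constant, so it cancels in the product.

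The main obstacle is the well-definedness of $\psi_{\lambda_0,\lambda_1}$, which was left to the reader in Proposition \ref{beta-iso}. We assume that proposition as stated. What remains is to check that the trivialisation in Corollary \ref{cor-main-triv} is compatible with the normal form \eqref{trivialisationbeta}. It is, since that corollary already produces a class with both bundles equal to $\Theta(\R^d)$.
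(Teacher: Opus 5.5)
Your proposal is correct and follows the route the paper indicates just before the statement. Contractibility gives $(A3)$, Corollary~\ref{cor-main-triv} puts $\ind(L)$ in the form $[\Theta(\R^d),\Theta(\R^d),\mathcal{M}]$, and the isomorphism $\psi_{\lambda_0,\lambda_1}$ of Proposition~\ref{beta-iso} then yields $\sgn\det\mathcal{M}_{\lambda_0}\cdot\sgn\det\mathcal{M}_{\lambda_1}$. Your remark that the criterion does not depend on the choice of frames, since $\sgn\det G_\lambda$ is constant on connected $\Lambda$, is a useful point the paper leaves implicit.
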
   
For the other corollaries, we focus again on the case that the bundles $E^u(0)$ and $E^s(0)$ are possibly non-trivial, i.e., we do no longer explicitly assume that $(A3)$ holds.\\
It follows from \eqref{ISO-S-U} that for any $\tau\geq 0$ there is a bundle isomorphism
\[
\mathcal{X}_\tau:E^u(-\tau) \oplus E^s(\tau) \rightarrow E^u(0)\oplus E^s(0)
\]
given by
\[
\mathcal{X}_\tau(\lambda,u,v)=(\lambda,\mathbb{X}_\lambda(0,-\tau)u,\mathbb{X}_\lambda(0,\tau)v)
\]
where $\mathbb{X} \colon \Lambda \times \mathbb{R}^2 \to \mathrm{GL}(d,\R)$ is as introduced at the end of Section \ref{section-expdich}. This yields the following simple reformulation of Theorem \ref{thm-main-lin}.
\begin{corollary}\label{cor-main-lin}
If Assumptions $(\mathrm{A}1)$ and $(\mathrm{A}2)$ hold, then for any $\tau\geq 0$
\begin{align*}
\ind(L)=[E^u(-\tau)\oplus E^s(\tau),\Theta(\mathbb{R}^d),\mathcal{L}_\tau]\in \KO(\Lambda,\Lambda_0),
\end{align*}
where the homomorphism $\mathcal{L}_\tau:E^u(-\tau)\oplus E^s(\tau)\rightarrow\Theta(\mathbb{R}^d)$ is given by
\[(\mathcal{L}_\tau)(\lambda,u,v)=\mathbb{X}_\lambda(0,-\tau)u-\mathbb{X}_\lambda(0,\tau)v.\]
\end{corollary}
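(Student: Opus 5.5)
This follows from Theorem \ref{thm-main-lin} once we show that the two classes coincide. The plan is to use the isomorphism invariance of triples encoded in \eqref{ISO1}. Fix $\tau\geq 0$. The map $\mathcal{X}_\tau:E^u(-\tau)\oplus E^s(\tau)\rightarrow E^u(0)\oplus E^s(0)$ is a bundle isomorphism. We check this first. By \eqref{ISO-S-U} and the identities $E^s_\lambda(\tau)=\mathbb{X}_\lambda(\tau,0)E^s_\lambda(0)$ and $E^u_\lambda(-\tau)=\mathbb{X}_\lambda(-\tau,0)E^u_\lambda(0)$, the inverse transition matrices $\mathbb{X}_\lambda(0,\tau)=\mathbb{X}_\lambda(\tau,0)^{-1}$ and $\mathbb{X}_\lambda(0,-\tau)=\mathbb{X}_\lambda(-\tau,0)^{-1}$ map these fibres back isomorphically onto $E^s_\lambda(0)$ and $E^u_\lambda(0)$. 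This uses the cocycle property of $\Phi_\lambda$, which holds for all $t,s,r$ and not only for the ordered triples in \eqref{triplePhi}. Continuity in $\lambda$ follows from the continuity of $\mathbb{X}$ on $\Lambda\times\R^2$. Hence $\mathcal{X}_\tau$ is a continuous fibrewise linear bijection between subbundles of $\Theta(\R^{2d})$, and therefore a bundle isomorphism.

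Next we verify that $\mathcal{L}\circ\mathcal{X}_\tau=\mathcal{L}_\tau$ on all of $\Lambda$, which is immediate:
\[\mathcal{L}(\mathcal{X}_\tau(\lambda,u,v))=\mathbb{X}_\lambda(0,-\tau)u-\mathbb{X}_\lambda(0,\tau)v=(\mathcal{L}_\tau)(\lambda,u,v).\]
In particular $\mathcal{L}_\tau$ is a bundle morphism whose restriction to $\Lambda_0$ is an isomorphism, because $\mathcal{L}|_{\Lambda_0}$ is one, as shown before Theorem \ref{thm-main-lin}. So $[E^u(-\tau)\oplus E^s(\tau),\Theta(\R^d),\mathcal{L}_\tau]$ is a well-defined element of $\KO(\Lambda,\Lambda_0)$.

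Finally, we take $\varphi_0=\mathcal{X}_\tau$ and $\varphi_1=\id_{\Theta(\R^d)}$ in the diagram \eqref{ISO1}. The identity $\mathcal{L}\circ\mathcal{X}_\tau=\id\circ\mathcal{L}_\tau$ shows that the diagram commutes, in fact on all of $\Lambda$ and so in particular over $\Lambda_0$. Thus the triples $(E^u(-\tau)\oplus E^s(\tau),\Theta(\R^d),\mathcal{L}_\tau)$ and $(E^u(0)\oplus E^s(0),\Theta(\R^d),\mathcal{L})$ are isomorphic in $L(\Lambda,\Lambda_0)$, and so they define the same class in $\KO(\Lambda,\Lambda_0)$. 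Combined with \eqref{indexformula}, this gives the claim. The only point needing care is that $\mathcal{X}_\tau$ is a genuine bundle isomorphism, i.e. that it maps fibres onto fibres and is continuous, and this is supplied by \eqref{ISO-S-U} and the continuity of $\mathbb{X}$. I expect no real obstacle.
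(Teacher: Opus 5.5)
Your proof is correct and uses the same two facts as the paper: $\mathcal{X}_\tau$ is a bundle isomorphism, and $\mathcal{L}\circ\mathcal{X}_\tau=\mathcal{L}_\tau$. The only difference is the last step: the paper adds the vanishing class $[E^u(-\tau)\oplus E^s(\tau),E^u(0)\oplus E^s(0),\mathcal{X}_\tau]$ to the class from Theorem \ref{thm-main-lin} and applies the composition rule \eqref{sum}, whereas you conclude directly from the isomorphism of triples \eqref{ISO1} with $\varphi_0=\mathcal{X}_\tau$, $\varphi_1=\id$ (as the paper itself does for Corollary \ref{cor-main-triv}), which is slightly more elementary because it avoids the homotopy argument behind \eqref{sum}.
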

\begin{proof}
As $\mathcal{X}_\tau:E^u(-\tau) \oplus E^s(\tau) \rightarrow E^u(0)\oplus E^s(0)$ is an isomorphism,
\[
[E^u(-\tau) \oplus E^s(\tau), E^u(0)\oplus E^s(0),\mathcal{X}_\tau]=0\in \KO(\Lambda,\Lambda_0).
\]
Thus it follows from Theorem \ref{thm-main-lin} and \eqref{sum} that
\begin{align*}
\ind(L)&=[E^u(0)\oplus E^s(0),\Theta(\mathbb{R}^d),\mathcal{L}]+[E^u(-\tau) \oplus E^s(\tau), E^u(0)\oplus E^s(0),\mathcal{X}_\tau]\\
&=[E^u(-\tau) \oplus E^s(\tau),\Theta(\mathbb{R}^d),\mathcal{L}\circ\mathcal{X}_\tau],
\end{align*}
which shows the claim as $\mathcal{L}\circ\mathcal{X}_\tau=\mathcal{L}_\tau$.
\end{proof}
We now finally discuss the case that the matrix family $A:\Lambda\times\mathbb{R}\rightarrow\Mat(d,\R)$ is asymptotically hyperbolic:
\begin{itemize}
\item[(A4)] There are continuous families $A^\pm:\Lambda\rightarrow\Mat(d,\mathbb{R})$ of hyperbolic matrices such that
\[A^\pm_\lambda:=\lim_{t\rightarrow\pm\infty}A_\lambda(t).\]
\end{itemize}
According to our discussion in Section \ref{section-expdich}, (A4) implies that \eqref{LLL} has an exponential dichotomy on both half axis and thus Assumption (A1) holds. Secondly, we also consider for a non-empty closed subspace $\Lambda_0\subset\Lambda$:
\begin{itemize}
\item[(A5)] For all $\lambda\in\Lambda_0$,
\[E^u_\lambda(0)\oplus E^s_\lambda(0)=\mathbb{R}^d.\]
\end{itemize}
Note that by Theorems \ref{roughness} and \ref{thm-invertible}, the Assumptions $(A4)$ and $(A5)$ imply $(A1)$ and $(A2)$. Thus we obtain from our main Theorem \ref{thm-main-lin} and Corollary \ref{cor-main-lin} the following corollary for asymptotically hyperbolic systems.
\begin{corollary}\label{cor-main-linII}
If Assumptions $(\mathrm{A}4)$ and $(\mathrm{A}5)$ hold, then for any $\tau\geq 0$
\begin{align*}
\ind(L)=[E^u(0)\oplus E^s(0),\Theta(\mathbb{R}^d),\mathcal{L}]
=[E^u(-\tau)\oplus E^s(\tau),\Theta(\mathbb{R}^d),\mathcal{L}_\tau]\in \KO(\Lambda,\Lambda_0),
\end{align*}
where the homomorphisms $\mathcal{L}$ and $\mathcal{L}_\tau$ are defined as above.
\end{corollary}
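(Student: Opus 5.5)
The statement is a direct consequence of results already established, so the plan is simply to check that the hypotheses of Theorem \ref{thm-main-lin} and Corollary \ref{cor-main-lin} are met and then combine them.

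First we verify $(A1)$. By $(A4)$, for each $\lambda$ the limits $A^\pm_\lambda$ exist and are hyperbolic. Write $A_\lambda(t)=A^\pm_\lambda+B^\pm_\lambda(t)$ with $B^\pm_\lambda(t):=A_\lambda(t)-A^\pm_\lambda$. The autonomous systems $\dot x=A^\pm_\lambda x$ have exponential dichotomies on the whole line, with constants $K,\alpha$. Choose $\tau_0$ so large that $\sup_{t\geq\tau_0}|B^+_\lambda(t)|<\alpha/(4K^2)$, and similarly on $(-\infty,-\tau_0]$. Theorem \ref{roughness} then gives exponential dichotomies on $[\tau_0,\infty)$ and $(-\infty,-\tau_0]$. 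Coppel's extension result quoted in Section \ref{section-expdich} extends these to $\R^+_0$ and $\R^-_0$. Continuity of the projectors in $\lambda$ is supplied by Theorem \ref{continuous-family-projections}, since $A$ is continuous and each $A_\lambda$ is bounded and uniformly continuous (standing assumptions). Hence $(A1)$ holds.

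Next we verify $(A2)$. For $\lambda\in\Lambda_0$, Theorem \ref{thm-invertible} applies: condition $(a)$ is $(A1)$ just established, and condition $(b)$ is exactly $(A5)$, because $R(P^+_\lambda(0))=E^s_\lambda(0)$ and $N(P^-_\lambda(0))=E^u_\lambda(0)$. So the system has an exponential dichotomy on $\R$ for every $\lambda\in\Lambda_0$, which is $(A2)$.

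With $(A1)$ and $(A2)$ in hand, Theorem \ref{thm-main-lin} gives the first equality and Corollary \ref{cor-main-lin} gives the second, for every $\tau\geq 0$.

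The only step needing care is uniformity in the first verification. The rough theorem is applied pointwise in $\lambda$, which is all that $(A1)$ requires, so no uniform choice of $\tau_0$ over $\Lambda$ is needed. Continuity of the projectors is not derived here but taken from Theorem \ref{continuous-family-projections}.
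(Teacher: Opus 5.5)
Your proposal is correct and follows the paper's own argument. The paper likewise notes that $(A4)$ and $(A5)$ imply $(A1)$ and $(A2)$ via Theorems~\ref{roughness} and~\ref{thm-invertible}, using the half-line extension from Section~\ref{section-expdich} and the continuity of the projectors from Theorem~\ref{continuous-family-projections}, and then reads off both equalities from Theorem~\ref{thm-main-lin} and Corollary~\ref{cor-main-lin}; your remark that the roughness argument is only needed pointwise in $\lambda$ is accurate.
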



\subsection{Comparison to the Work of Pejsachowicz and Hu-Portaluri}\label{section-singlepar}
The aim of this subsection is to deduce the main results of the articles \cite{JacoboAMS} and \cite{HuP} from the above Corollary \ref{cor-main-linII} for asymptotically hyperbolic systems.
\subsubsection{Pejsachowicz' Index Formula}
Pejsachowicz considered in \cite{JacoboAMS} the case of the circle $\Lambda=S^1$ as parameter space and $\Lambda_0=\{\lambda_0\}$ for some $\lambda_0\in S^1$ under the Assumptions $(A4)$ and $(A5)$. Let us firstly point out that instead of $(A5)$ it actually is assumed in \cite{JacoboAMS} that both equations \eqref{P-L} and \eqref{A-P-L} only have the trivial homoclinic solution $u\equiv 0$ for $\lambda=\lambda_0$. As $E^u_\lambda(0)\cap E^s_\lambda(0)$ is isomorphic to the space of homoclinic solutions of \eqref{P-L} and, as recalled in Section \ref{section-expdich}, the stable and unstable spaces of the adjoint equation \eqref{A-P-L} are the orthogonal complements of $E^{u/s}_\lambda(0)$ in $\mathbb{R}^d$, this is equivalent to $(A5)$.\\
The aim of this section is to show that our Corollary \ref{cor-main-linII} for $\Lambda=S^1$, $\Lambda_0=\{\lambda_0\}$, i.e.,
\[
\ind(L)=[E^u(0)\oplus E^s(0),\Theta(\mathbb{R}^d),\mathcal{L}]\in \KO(S^1,\{\lambda_0\})
\]
is equivalent to Pejsachowicz' Index Theorem \cite{JacoboAMS}. Before we can state the exact result, we need some preliminaries.\\
Firstly, Pejsachowicz considers the index bundle
\begin{align}\label{indPejsachowicz}
[E(L,V)]-[\Theta(V)]\in\widetilde{\KO}(S^1)
\end{align}
as introduced by Atiyah and J\"anich for families $L:S^1\rightarrow\Phi_0(X,Y)$, which by \eqref{IndBundClassical} is $\Psi_{\lambda_0}(\ind(L))$ for the isomorphism $\Psi_{\lambda_0}:\KO(S^1,\{\lambda_0\})\rightarrow\widetilde{\KO}(S^1)$ introduced in \eqref{alpha}. Secondly, let us consider a family of autonomous differential equations of the form
\[
\dot{x}=B_\lambda x
\]
for some family $B:S^1\rightarrow\Mat(d,\mathbb{R})$ of hyperbolic matrices $B_\lambda$, i.e., all eigenvalues of $B_\lambda$ have non-vanishing real parts. Then the stable and unstable spaces $E^s_\lambda(0)$, $E^u_\lambda(0)$ are the generalised eigenspaces with respect to eigenvalues having negative or positive real part, respectively. As these are the images of the spectral projections regarding the eigenvalues on the negative or positive half-plane, and as the spectral projections depend continuously on the matrices, we obtain two vector bundles $E^u_B$ and $E^s_B$ over $S^1$. Note that $E^u_B$ and $E^s_B$ can also be obtained from Theorem \ref{continuous-family-projections}, but here we want to point out that they can be constructed without the concept of exponential dichotomies following \cite{JacoboAMS}.\\
Now we are ready to show that Pejsachowicz' index theorem is equivalent to Corollary \ref{cor-main-linII} in the special case that $\Lambda=S^1$ and $\Lambda_0$ is a singleton.

\begin{corollary}\label{cor-Jacobo}
If $(A4)$ and $(A5)$ hold for the family \eqref{Hamilton} parameterised by $\Lambda=S^1$ and for $\Lambda_0=\{\lambda_0\}$, then
\[\Psi_{\lambda_0}(\ind(L))=[E^s_{A^+}]-[E^s_{A^-}]\in\widetilde{\KO}(S^1)\cong\mathbb{Z}_2.\]
\end{corollary}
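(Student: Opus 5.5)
The plan is to compute $\Psi_{\lambda_0}(\ind(L))$ directly from Corollary \ref{cor-main-linII} and then identify the bundles appearing there with the spectral bundles of the limit matrices $A^\pm$. By Corollary \ref{cor-main-linII} and the definition \eqref{alpha} of $\Psi_{\lambda_0}$, for every $\tau\geq 0$ we have
\[\Psi_{\lambda_0}(\ind(L))=[E^u(-\tau)\oplus E^s(\tau)]-[\Theta(\mathbb{R}^d)]\in\widetilde{\KO}(S^1).\]
So it suffices to show two isomorphisms for $\tau$ large: $E^s(\tau)\cong E^s_{A^+}$ and $E^u(-\tau)\cong E^u_{A^-}$.

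Granting these, the spectral decomposition $\mathbb{R}^d=E^u_{A^-_\lambda}\oplus E^s_{A^-_\lambda}$ yields an isomorphism $E^u_{A^-}\oplus E^s_{A^-}\cong\Theta(\mathbb{R}^d)$, given by $(\lambda,u,v)\mapsto(\lambda,u+v)$. Computing in $\widetilde{\KO}(S^1)$ then gives
\[[E^u_{A^-}\oplus E^s_{A^+}]-[E^u_{A^-}\oplus E^s_{A^-}]=[E^s_{A^+}]-[E^s_{A^-}],\]
which is the claim. The identification $\widetilde{\KO}(S^1)\cong\mathbb{Z}_2$ is only recalled; in particular, signs play no role there.

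For the isomorphisms, I will use the standard fact that two families of projections $P,Q:\Lambda\rightarrow\Mat(d,\mathbb{R})$ with $\|P_\lambda-Q_\lambda\|<1$ for all $\lambda$ have isomorphic range bundles. Indeed, $\lambda\mapsto P_\lambda|_{R(Q_\lambda)}$ is a bundle morphism $R(Q)\rightarrow R(P)$, and it is injective since $P_\lambda x=0$, $x=Q_\lambda x$ gives $\|x\|=\|(Q_\lambda-P_\lambda)x\|<\|x\|$ unless $x=0$. Exchanging the roles of $P$ and $Q$ shows the ranks agree, so this morphism is fibrewise bijective. Thus I need the uniform convergence
\[\sup_{\lambda\in S^1}\|P^+_\lambda(\tau)-\Pi^+_\lambda\|\rightarrow 0\text{ as }\tau\rightarrow\infty,\]
where $\Pi^+_\lambda$ is the spectral projection of $A^+_\lambda$ onto its stable generalised eigenspace. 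The analogous statement for $I_d-P^-_\lambda(-\tau)$ and the unstable spectral projection of $A^-_\lambda$ is handled identically.

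The main obstacle is this uniform convergence of projectors. First I will use compactness of $S^1$ and continuity of $A^+$ to obtain uniform dichotomy constants $K,\alpha$ for the autonomous systems $\dot x=A^+_\lambda x$ on $\mathbb{R}$, with projectors $\Pi^+_\lambda$. Next I will argue that the convergence $A_\lambda(t)\rightarrow A^+_\lambda$ is uniform in $\lambda$; this is the natural reading of $(A4)$, as in \cite{JacoboAMS}, and otherwise I would add it as a hypothesis. Then Coppel's roughness argument (Theorem \ref{roughness}) applied on $[\tau,\infty)$ with $B_\lambda(t)=A_\lambda(t)-A^+_\lambda$ does more than give a dichotomy. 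Its proof, via the Lyapunov–Perron fixed point, also yields a projector for the perturbed system satisfying $\|P_\lambda(\tau)-\Pi^+_\lambda\|\leq C\sup_{t\geq\tau}\|B_\lambda(t)\|$, with $C$ depending only on $K,\alpha$. Since the range of a dichotomy projector on $[\tau,\infty)$ is uniquely the stable space $E^s_\lambda(\tau)$, replacing $P^+_\lambda(\tau)$ by this projector does not change the bundle $E^s(\tau)$. This gives the required estimate, and hence $E^s(\tau)\cong E^s_{A^+}$ for large $\tau$.
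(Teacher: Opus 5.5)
Your argument is correct, but it takes a different route from the paper.

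\textbf{The paper's route.} The paper never compares the stable and unstable bundles of the original system with the spectral bundles of $A^\pm$. It builds a continuous family $C_\lambda(t)$ with $C_\lambda(t)=A^\pm_\lambda$ exactly for $\pm t\geq\tau_0$, which is $\frac{\alpha}{4K^2}$-close to $A_{\lambda_0}$ only at the single parameter $\lambda_0$. Since $A_\lambda-C_\lambda$ decays at $\pm\infty$, the multiplication operators $S_{K_\lambda}$ are compact. The homotopy $s\mapsto \widetilde{L}_\lambda-sS_{K_\lambda}$ therefore stays in $\Phi_0$, and by the qualitative Theorem~\ref{roughness} at $\lambda_0$ it stays invertible over $\Lambda_0$. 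Homotopy invariance then gives $\ind(L)=\ind(\widetilde{L})$. For $\widetilde{L}$, Corollary~\ref{cor-main-linII} with $\tau=\tau_0$ yields $E^u(-\tau_0)=E^u_{A^-}$ and $E^s(\tau_0)=E^s_{A^+}$ as literal equalities. The $\widetilde{\KO}(S^1)$ bookkeeping afterwards is the same as yours.

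\textbf{Your route.} You keep $L$ itself and instead need uniform-in-$\lambda$ input:
\begin{itemize}
\item uniform dichotomy constants for $A^\pm_\lambda$;
\item convergence $A_\lambda(t)\to A^\pm_\lambda$ uniformly in $\lambda$;
\item the quantitative part of Coppel's roughness theorem, $\|Q_\lambda(\tau)-\Pi^+_\lambda\|\leq C\delta$, which goes beyond Theorem~\ref{roughness} as stated in the paper;
\item the lemma that close projections have isomorphic range bundles.
\end{itemize}

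\textbf{What each route buys.} The paper's homotopy needs less. It uses only pointwise existence of the limits, continuity of $L$, and closeness to $A^\pm$ at the one parameter $\lambda_0$, so the uniformity hypothesis you flag is never required there. Your route avoids the compact-perturbation lemma and the operator homotopy. In exchange it proves a genuinely stronger intermediate fact for the original family: $E^s(\tau)\cong E^s_{A^+}$ and $E^u(-\tau)\cong E^u_{A^-}$ for large $\tau$. By \eqref{ISO-S-U} this gives $E^s(0)\cong E^s_{A^+}$ and $E^u(0)\cong E^u_{A^-}$, which could be useful elsewhere.

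\textbf{One point to state explicitly.} Coppel's projectors $Q_\lambda(\tau)$ need not depend continuously on $\lambda$. Your argument survives this only because your morphism runs in the direction $E^s(\tau)\to E^s_{A^+}$, $x\mapsto\Pi^+_\lambda x$. That map uses continuity of $\Pi^+$ alone, and $E^s(\tau)$ is already a bundle by Theorem~\ref{continuous-family-projections}. The reverse map $x\mapsto Q_\lambda x$ enters only through the fibrewise dimension count.
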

\begin{proof}
By $(A4)$, $(A5)$ and Theorem \ref{roughness} the equation $\dot{x}=A_{\lambda_0}(t)x$ has an exponential dichotomy on all of $\mathbb{R}$, and henceforth we let $K\geq 1$ and $\alpha>0$ be the corresponding constants in Definition \ref{def-expdich}. We now let $\tau_0>0$ be sufficiently large such that
\[
\sup_{t\in[\tau_0,\infty)}|A_{\lambda_0}(t)-A^+_{\lambda_0}|<\frac{\alpha}{4K^2},\qquad \sup_{t\in(-\infty,-\tau_0]}|A_{\lambda_0}(t)-A^-_{\lambda_0}|<\frac{\alpha}{4K^2}.
\]
Moreover, we let $\R \times S^1 \ni (t,\lambda) \mapsto C_\lambda(t) \in \Mat(d,\R)$ be a continuous matrix family such that
\begin{align}\label{estimatePej}
\sup_{t\in\R}|A_{\lambda_0}(t)-C_{\lambda_0}(t)|<\frac{\alpha}{4K^2},
\end{align}
and $C_\lambda(t)=A^-_\lambda$ for $t\leq -\tau_0$, as well as $C_\lambda(t)=A^+_\lambda$ for $t\geq \tau_0$. If we now consider
\[
K_\lambda(t):=A_\lambda(t)-C_\lambda(t),
\]
then the operator $S_{K_{\lambda}}:H^1(\R,\R^p)\rightarrow L^2(\R,\R^p)$, $(S_{K_{\lambda}}x)(t)=K_{\lambda}(t)x(t)$ is compact by \cite[Lemma 3.3]{AlbertoMorseHilbert} as $\lim\limits_{t\to\pm\infty}K_{\lambda}(t)=0$. Now we set
\[
L_{\lambda}=\widetilde{L}_{\lambda}-S_{K_{\lambda}},
\]
where $L_{\lambda}x=\dot{x}-A_{\lambda}x$ and $\widetilde{L}_{\lambda}x=\dot{x}-C_{\lambda}x$. The operators $\widetilde{L}_{\lambda_0}-sS_{K_{\lambda_0}}$ are invertible for all $s\in[0,1]$ by Theorem \ref{roughness}, \eqref{estimatePej} and Theorem \ref{thm-invertible}.\\
It follows from the homotopy invariance of the index bundle in Proposition \ref{index-bundle} and Corollary \ref{cor-main-linII} that
\begin{equation*}
\ind(L)=\ind(\widetilde{L}+S_K)=\ind(\widetilde{L})=\left[ E^u_{A^-} \oplus E^s_{A^+},\, \Theta(\R^d),\, \mathcal{L}_{\tau_0} \right] ,
\end{equation*}
where we have used in the last equality that $E^u_\lambda(-\tau_0)=E^u_{A^-}$ and $E^s_\lambda(\tau_0)=E^s_{A^+}$, $\lambda\in S^1$, for the equation $\dot{x}=C_\lambda x$. Hence we obtain in $\widetilde{KO}(S^1)$

\begin{align*}
\Psi_{\lambda_0}(\ind(L))&=[ E^u_{A^-} \oplus E^s_{A^+}]-[\Theta(\mathbb{R}^d)]=[E^s_{A^-}\oplus  E^u_{A^-} \oplus E^s_{A^+}]-[E^s_{A^-}\oplus\Theta(\mathbb{R}^d)]\\
&=[\Theta(\mathbb{R}^d)\oplus E^s_{A^+}]-[E^s_{A^-}\oplus\Theta(\mathbb{R}^d)]=[E^s_{A^+}]-[E^s_{A^-}]\in\widetilde{KO}(S^1), 
\end{align*}
where we have used that $[E]-[F]=0\in\widetilde{KO}(S^1)$ if the vector bundles $E$ and $F$ are isomorphic, as well as the fact that $E\oplus F$ is isomorphic to $F\oplus E$.

\end{proof}
Let us finally point out that Pejsachowicz generalised his index theorem in \cite{JacoboII} to the case that $\Lambda$ is a general compact topological space and $\Lambda_0=\{\lambda_0\}$ a singleton. It turns out that the above formula for $\ind(L)$ verbatim holds when we replace $\widetilde{\KO}(S^1)$ by $\widetilde{\KO}(\Lambda)$, which again follows from our main Theorem \ref{thm-main-lin} as well by the same proof as in Corollary \ref{cor-Jacobo}.




\subsubsection{Hu and Portaluri's Index Formula}
Hu and Portaluri considered in \cite{HuP} the case $\Lambda=[a,b]$, $\Lambda_0=\{a,b\}$ under the Assumptions $(A4)$ and $(A5)$, and constructed two $\mathbb{Z}_2$-valued invariants for \eqref{LLL}, which they equated in their main theorem. Let us first introduce the $\mathbb{Z}_2$-index of \eqref{LLL} in the terminology of \cite{HuP}. As the families of stable and unstable spaces $\{E^u_\lambda(0)\}_{\lambda\in [a,b]}$ and $\{E^s_\lambda(0)\}_{\lambda\in [a,b]}$ are continuous paths in the Grassmannians $Gr_k(d,\mathbb{R})$ and $Gr_{d-k}(d,\mathbb{R})$ for some $1\leq k\leq d-1$ by (A5), there are frames $\{v_1(\lambda),\ldots,v_k(\lambda)\}_{\lambda\in [a,b]}$ for $\{E^u_\lambda(0)\}_{\lambda\in [a,b]}$ and $\{w_1(\lambda),\ldots,w_{d-k}(\lambda)\}_{\lambda\in [a,b]}$ for $\{E^s_\lambda(0)\}_{\lambda\in [a,b]}$. We consider the matrix family $\{M(\lambda)\}_{\lambda\in [a,b]}$, where
\[M(\lambda)=(v_1(\lambda),\ldots, v_k(\lambda),w_1(\lambda),\ldots,w_{d-k}(\lambda))\in\Mat(d,\mathbb{R})\]
and set as in \cite[Definition 2.1]{HuP}
\[\iota(E^u_\lambda(0),E^s_\lambda(0);\lambda\in I)=\begin{cases}
0\quad\text{if}\quad \det(M(0)M(1))>0\\
1\quad\text{if}\quad \det(M(0)M(1))<0.
\end{cases}\]
The construction of the other index in \cite{HuP} is as follows. As in our construction \eqref{defiindbund}, the authors consider the vector bundle $E(L,V)$ over $[a,b]$ and the restriction $L|_{E(L,V)}:E(L,V)\rightarrow\Theta(V)$. As every bundle over $[a,b]$ is trivial, there are bundle isomorphisms $\varphi:E(L,V)\rightarrow\Theta(\R^d)$, $\phi:\Theta(V)\rightarrow\Theta(\R^d)$ and we obtain a morphism $\tilde{L}:=\phi\circ L\circ\varphi^{-1}:\Theta(\mathbb{R}^d)\rightarrow\Theta(\mathbb{R}^d)$, where $d=\dim(V)$. Now $S^1$ can be obtained by gluing two copies of $[a,b]$ at the endpoints and the product bundles can be glued by the clutching function $\tilde{L}$ which is an isomorphism over $\partial[a,b]=\{a,b\}$. The result is a vector bundle $E_L$ over $S^1$ and Hu and Portaluri call
\[
\sigma(L_\lambda,\lambda\in [a,b])=\begin{cases}
0\quad\text{if}\quad E_L\quad\text{is orientable},\\
1\quad\text{if}\quad E_L\quad\text{is non-orientable}
\end{cases}
\]
the parity of the path $L$, which is just the first Stiefel-Whitney number of the vector bundle $E_L$. The main theorem of \cite{HuP} states that
\begin{align}\label{HuPortaluri}
\sigma(L_\lambda,\lambda\in [a,b])=\iota(E^u_\lambda(0),E^s_\lambda(0);\lambda\in [a,b])\in\mathbb{Z}_2,
\end{align}
and we claim that this is an immediate consequence of our Corollary \ref{cor-main-contractible} for $\Lambda=[a,b]$, $\Lambda_0=\{a,b\}$. Indeed, from the latter corollary it only remains to identify $\psi_{a,b}(\ind(L))$ and $\sigma(L_\lambda,\lambda\in [a,b])$ as elements of $\mathbb{Z}_2$. The construction of $E_L$ in the definition of $\sigma(L_\lambda,\lambda\in [a,b])$ is made by a clutching of two trivial bundles on $[a,b]$ along $\{a,b\}$ by $\tilde{L}_\lambda\in\GL(d,\mathbb{R})$, $\lambda\in \{a,b\}$. Now $E_L$ is orientable if and only if $\tilde{L}_a$ and $\tilde{L}_b$ belong to the same path-component of $\GL(d,\mathbb{R})$ (see, e.g., \cite[\S 1.2]{Hatcher2}), which is the case if and only if $\det(\tilde{L}_a)\det(\tilde{L}_b)>0$. Hence it follows from \eqref{defiindbund} and \eqref{beta} that
\[\sigma(L_\lambda,\lambda\in [a,b])=\psi_{a,b}(\ind(L))\in\mathbb{Z}_2,\]
and so \eqref{HuPortaluri} indeed is a simple consequence of Corollary \ref{cor-main-contractible}.


\subsection{Proof of the Index Theorem}
We divide the proof into four major steps.
\subsubsection*{Step I: From $L$ to $L_P$.}
Consider for some fixed $\tau_0>0$ the family of differential operators
\begin{align*}
L^0_\lambda:\mathcal{D}(L^0_\lambda)\subset L^2([-\tau_0,\tau_0],\mathbb{R}^d)\rightarrow L^2([-\tau_0,\tau_0],\mathbb{R}^d),\quad (L^0_\lambda x)(t)=\dot{x}(t)-A_\lambda(t)x(t),
\end{align*}
where
\begin{align*}
\mathcal{D}(L^0_\lambda):=\left\{x\in H^1([-\tau_0,\tau_0],\mathbb{R}^d): x(-\tau_0)\in E^u_\lambda(-\tau_0),\, x(\tau_0)\in E^s_\lambda(\tau_0)\,\right\}.
\end{align*}
These operators are vital for understanding the family $\{L_\lambda\}_{\lambda\in\Lambda}$ and we now first discuss basic analytic properties.
\begin{lemma}\label{Lemma-FredInd0}
The operators $L^0_\lambda$, $\lambda\in\Lambda$, are Fredholm of index $0$.
\end{lemma}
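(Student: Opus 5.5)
We prove that each $L^0_\lambda$ is Fredholm of index $0$ by first computing its kernel and cokernel explicitly, and then relating the two dimensions through \eqref{dimform}. The kernel is easy. Any $x\in N(L^0_\lambda)$ solves $\dot x=A_\lambda(t)x$ on $[-\tau_0,\tau_0]$, so $x(t)=\Phi_\lambda(t,0)x(0)$. The boundary conditions then read $x(0)=\mathbb{X}_\lambda(0,-\tau_0)x(-\tau_0)\in E^u_\lambda(0)$ and $x(0)=\mathbb{X}_\lambda(0,\tau_0)x(\tau_0)\in E^s_\lambda(0)$, by the invariance $E^{s}_\lambda(\tau)=\mathbb{X}_\lambda(\tau,0)E^s_\lambda(0)$ (and likewise for $E^u$). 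Hence
\[N(L^0_\lambda)\cong E^u_\lambda(0)\cap E^s_\lambda(0),\]
which is finite-dimensional.

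Next we show the range is closed of finite codimension by solving $L^0_\lambda x=f$ through variation of constants. For $f\in L^2([-\tau_0,\tau_0],\R^d)$, every $H^1$ solution has the form
\[x(t)=\Phi_\lambda(t,-\tau_0)\Big(\xi+\int_{-\tau_0}^t\Phi_\lambda(-\tau_0,s)f(s)\d s\Big),\qquad \xi\in\R^d,\]
and all such $x$ lie in $H^1$ because $A_\lambda$ is bounded on the compact interval. The constraint $x(-\tau_0)\in E^u_\lambda(-\tau_0)$ requires $\xi\in E^u_\lambda(-\tau_0)$. Transporting the right-endpoint condition to time $0$ then shows that $f\in R(L^0_\lambda)$ if and only if
\[w(f):=\int_{-\tau_0}^{\tau_0}\Phi_\lambda(0,s)f(s)\d s\in E^u_\lambda(0)+E^s_\lambda(0).\]
The map $f\mapsto w(f)\in\R^d$ is bounded and surjective; for surjectivity, take $f$ equal to $\Phi_\lambda(s,0)v$ times a suitable scalar function. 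Therefore $R(L^0_\lambda)=w^{-1}\big(E^u_\lambda(0)+E^s_\lambda(0)\big)$ is closed, and
\[\codim R(L^0_\lambda)=d-\dim\big(E^u_\lambda(0)+E^s_\lambda(0)\big).\]

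For the index, note that by \eqref{dimform} (which applies since $\Lambda_0\neq\emptyset$ by $(A2)$ and $\Lambda$ is connected) we have $\dim E^u_\lambda(0)+\dim E^s_\lambda(0)=d$. The dimension formula for sums of subspaces gives
\[\dim N(L^0_\lambda)=\dim(E^u_\lambda(0)\cap E^s_\lambda(0))=d-\dim(E^u_\lambda(0)+E^s_\lambda(0))=\codim R(L^0_\lambda),\]
so the index is $0$. The only step requiring care is the translation of the boundary conditions into the condition on $w(f)$. There we must use that the spaces are transported by the transition matrices, $\Phi_\lambda(0,\pm\tau_0)E^{s/u}_\lambda(\pm\tau_0)=E^{s/u}_\lambda(0)$, and that the chosen projectors are invariant; we also need that $\Phi_\lambda(t,s)$ is invertible for $t<s$, since the displayed monotonicity in \eqref{triplePhi} is stated only for ordered times, although the identity holds for all times for linear ODEs.

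As an alternative route, one could view $L^0_\lambda$ as a finite-rank boundary perturbation of the invertible operator with initial condition $x(-\tau_0)=0$. That approach gives the index as $\dim E^u_\lambda(-\tau_0)-\codim E^s_\lambda(\tau_0)=0$, but the direct computation above is more transparent and also yields the kernel and range descriptions needed later.
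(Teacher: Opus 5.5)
Your proof is correct, but it takes a different route from the paper's. The paper argues abstractly. On $H^1([-\tau_0,\tau_0],\R^d)$ the map $x\mapsto\dot x$ is surjective with $d$-dimensional kernel, so it is Fredholm of index $d$. Multiplication by $-A_\lambda$ is compact from $H^1$ to $L^2$ by the compact embedding. The domain $\mathcal{D}(L^0_\lambda)$ is the kernel of $x\mapsto\bigl(P^-_\lambda(-\tau_0)x(-\tau_0),(I_d-P^+_\lambda(\tau_0))x(\tau_0)\bigr)$, whose range has dimension $d$. The restriction formula $\ind(D|_Z)=\ind(D)-\codim(Z)$ from \cite[XI.3]{Gohberg} then gives index $d-d=0$.

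You instead solve $L^0_\lambda x=f$ by variation of constants and identify $N(L^0_\lambda)\cong E^u_\lambda(0)\cap E^s_\lambda(0)$ and $R(L^0_\lambda)=w^{-1}\bigl(E^u_\lambda(0)+E^s_\lambda(0)\bigr)$, with $w$ bounded and surjective. Both routes give the index as $\dim E^u_\lambda(0)+\dim E^s_\lambda(0)-d$ and invoke \eqref{dimform} at the same point; the paper hides it in the claim that the domain has codimension $d$.

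Your argument is elementary and needs no compactness or restriction theorem. It also yields information the paper uses later without proof: the kernel identification underlying \eqref{injective-kernels} and the kernel of $Q_\lambda$ in Step III, plus an explicit cokernel $\R^d/\bigl(E^u_\lambda(0)+E^s_\lambda(0)\bigr)$. The paper's argument avoids solving the equation and transfers verbatim to $(L^0_\lambda)^\ast$ in Lemma \ref{lemma-adjointL0} and to $Q_\lambda$, since only the domain's codimension changes. Your computation would transfer too, using the adjoint transition matrix $\Phi_\lambda(s,t)^\top$ and the orthogonal complements, but it has to be redone each time.
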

\begin{proof}
To simplify notation we set in this proof $Y=L^2([-\tau_0,\tau_0],\mathbb{R}^d)$, $X=H^1([-\tau_0,\tau_0],\mathbb{R}^d)$ and $Z=\mathcal{D}(L^0_\lambda)$. The operator $L^0_\lambda:Z\rightarrow Y$ is of the form $L^0_\lambda=Q|_Z+K_\lambda |_Z$, where
$Q:X\rightarrow Y$ is defined by $Qx=\dot{x}$, and $K_\lambda:X\rightarrow Y$ is the multiplication operator $(K_\lambda x)(t)=-A_\lambda(t)x(t)$. As $K_\lambda$ extends to a bounded operator on $Y$ and the embedding of $X$ into $Y$ is compact, it follows that $K_\lambda:X\rightarrow Y$ is a compact operator. The operator $Q$ is surjective and its kernel is the $d$-dimensional subspace of constant functions in $X$. Thus $Q$ is a Fredholm operator of index $d$. Now consider the homomorphism
\[
T:X\rightarrow\mathbb{R}^d\oplus\mathbb{R}^d,\quad x\mapsto (P^-_\lambda(-\tau_0) x(-\tau_0),(I_d-P^+_\lambda(\tau_0))x(\tau_0)),
\]
where $P^-_\lambda(-\tau_0)$ and $P^+_\lambda(\tau_0)$ are projections as in Theorem \ref{continuous-family-projections}. Then $N(T)=\mathcal{D}(L^0_\lambda)$ and the codimension of this space in $X$ is by the first isomorphism theorem the dimension of the range of $T$, which is $\dim(E^u_\lambda(-\tau_0)^\perp)+\dim(E^s_\lambda(\tau_0)^\perp)=d$.\\
Finally the claim follows from \cite[XI.3]{Gohberg}, which states that the restriction of a Fredholm operator $D:X\rightarrow Y$ to a closed subspace $Z\subset X$ of finite codimension is a Fredholm operator of index $\ind(D|_Z)=\ind(D)-\codim(Z)$.
\end{proof}
If we consider for $\lambda\in\Lambda$ the unbounded operators
\begin{align*}
(L^0_\lambda)^\ast&:\mathcal{D}((L^0_\lambda)^\ast)\subset L^2([-\tau_0,\tau_0],\mathbb{R}^d)\rightarrow L^2([-\tau_0,\tau_0],\mathbb{R}^d),
\quad ((L^{0}_{\lambda})^{\ast}x)(t)=-\dot{x}(t)-A_\lambda(t)^{\top}x(t),
\end{align*}
where
\begin{equation*}
\mathcal{D}((L^0_\lambda)^\ast)=\{x\in H^1([-\tau_0,\tau_0],\mathbb{R}^d):\, x(-\tau_0)\in E^u_{\lambda}(-\tau_0)^\perp,\, x(\tau_0)\in E^s_{\lambda}(t_0)^\perp\},
\end{equation*}
then we obtain the following lemma, which will be needed below.
\begin{lemma}\label{lemma-adjointL0}
The operator $(L^0_\lambda)^\ast$ is the Hilbert space adjoint of $L^0_\lambda$ in $L^2([-\tau_0,\tau_0],\mathbb{R}^d)$.
\end{lemma}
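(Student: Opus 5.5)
The plan is to apply the abstract criterion Lemma \ref{lemma-adjointcrit} with $H=L^2([-\tau_0,\tau_0],\mathbb{R}^d)$, $T=L^0_\lambda$ and $S=(L^0_\lambda)^\ast$. Both operators are densely defined, since their domains contain $C_c^\infty((-\tau_0,\tau_0),\mathbb{R}^d)$. So it suffices to verify the three hypotheses $(i)$–$(iii)$ of that lemma.

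For $(i)$ we integrate by parts. For $x\in\mathcal{D}(L^0_\lambda)$ and $y\in\mathcal{D}((L^0_\lambda)^\ast)$,
\[
\langle L^0_\lambda x,y\rangle-\langle x,(L^0_\lambda)^\ast y\rangle=\langle x(\tau_0),y(\tau_0)\rangle-\langle x(-\tau_0),y(-\tau_0)\rangle .
\]
Here $x(\tau_0)\in E^s_\lambda(\tau_0)$ and $y(\tau_0)\in E^s_\lambda(\tau_0)^\perp$, and similarly at $-\tau_0$ with $E^u_\lambda(-\tau_0)$. Hence both boundary terms vanish.

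For $(ii)$, $L^0_\lambda$ is Fredholm of index $0$ by Lemma \ref{Lemma-FredInd0}. For $(L^0_\lambda)^\ast$ we repeat that proof verbatim. The operator $x\mapsto-\dot x$ on $H^1$ is Fredholm of index $d$, and the multiplication by $-A_\lambda^\top$ is compact. The domain is the kernel of
\[
x\mapsto\bigl((I_d-P^-_\lambda(-\tau_0))^\top x(-\tau_0),\,P^+_\lambda(\tau_0)^\top x(\tau_0)\bigr).
\]
This map is onto $R((I_d-P^-_\lambda(-\tau_0))^\top)\oplus R(P^+_\lambda(\tau_0)^\top)$, a space of dimension $\dim E^u_\lambda(-\tau_0)+\dim E^s_\lambda(\tau_0)=d$ by \eqref{dimform} and \eqref{ISO-S-U}. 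Its kernel is exactly the condition $x(-\tau_0)\in E^u_\lambda(-\tau_0)^\perp$, $x(\tau_0)\in E^s_\lambda(\tau_0)^\perp$, because $N(Q^\top)=R(Q)^\perp$. The codimension formula from \cite[XI.3]{Gohberg} then gives index $0$.

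Hypothesis $(iii)$ is the main point. A solution of $\dot x=A_\lambda x$ on $[-\tau_0,\tau_0]$ is $x(t)=\Phi_\lambda(t,0)x(0)$. It lies in $N(L^0_\lambda)$ iff $x(0)\in\Phi_\lambda(0,-\tau_0)E^u_\lambda(-\tau_0)\cap\Phi_\lambda(0,\tau_0)E^s_\lambda(\tau_0)=E^u_\lambda(0)\cap E^s_\lambda(0)$. For the adjoint equation the transition matrix is $\Phi_\lambda(s,t)^\top$, so $y(t)=\Phi_\lambda(0,t)^\top y(0)$. The condition $y(\tau_0)\perp E^s_\lambda(\tau_0)$ is equivalent to $y(0)\perp\Phi_\lambda(0,\tau_0)^{-1}E^s_\lambda(\tau_0)$, that is, $y(0)\perp E^s_\lambda(0)$; similarly at $-\tau_0$. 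Hence
\[
N((L^0_\lambda)^\ast)\cong E^u_\lambda(0)^\perp\cap E^s_\lambda(0)^\perp=(E^u_\lambda(0)+E^s_\lambda(0))^\perp .
\]
Its dimension is $d-\dim(E^u_\lambda(0)+E^s_\lambda(0))=\dim(E^u_\lambda(0)\cap E^s_\lambda(0))$, using $\dim E^u_\lambda(0)+\dim E^s_\lambda(0)=d$ from \eqref{dimform}. So the two kernels have equal dimension. Alternatively, $(iii)$ follows at once from $(i)$ and $(ii)$: by $(i)$, $N((L^0_\lambda)^\ast)\subset R(L^0_\lambda)^\perp$, whose dimension equals $\dim N(L^0_\lambda)$ by index $0$. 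However, the explicit computation is cleaner and also gives the reverse inequality. Lemma \ref{lemma-adjointcrit} then yields the claim.
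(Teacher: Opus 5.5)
Your proof is correct and follows the paper's overall strategy. You apply Lemma \ref{lemma-adjointcrit}, get $(i)$ by integration by parts, and get $(ii)$ by repeating the argument of Lemma \ref{Lemma-FredInd0} with a domain of codimension $\dim E^u_\lambda(-\tau_0)+\dim E^s_\lambda(\tau_0)=d$.

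The difference lies in $(iii)$. The paper passes to the whole line. It identifies $N(L^0_\lambda)$ with $N(L_\lambda)$ and $N((L^0_\lambda)^\ast)$ with $N(L^\ast_\lambda)$ by extending solutions beyond $\pm\tau_0$. For the adjoint, this relies on the dual dichotomy facts from Section 2.1, namely that the stable and unstable spaces of $\dot x=-A^\top_\lambda x$ are $E^s_\lambda(\tau_0)^\perp$ and $E^u_\lambda(-\tau_0)^\perp$. It then invokes $\dim N(L_\lambda)=\dim N(L^\ast_\lambda)$ from Theorem \ref{thm-FredholmIndex}.

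You instead stay on $[-\tau_0,\tau_0]$ and compute both kernels by linear algebra, obtaining $N(L^0_\lambda)\cong E^u_\lambda(0)\cap E^s_\lambda(0)$ and $N((L^0_\lambda)^\ast)\cong (E^u_\lambda(0)+E^s_\lambda(0))^\perp$. This uses only transition matrices and \eqref{dimform}. It is more self-contained and avoids the Fredholm theory on $\mathbb{R}$ imported from the literature.

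There is one slip. From $y(\tau_0)=\Phi_\lambda(0,\tau_0)^\top y(0)$ you get $\langle y(\tau_0),v\rangle=\langle y(0),\Phi_\lambda(0,\tau_0)v\rangle$. So the condition reads $y(0)\perp\Phi_\lambda(0,\tau_0)E^s_\lambda(\tau_0)=E^s_\lambda(0)$, not $y(0)\perp\Phi_\lambda(0,\tau_0)^{-1}E^s_\lambda(\tau_0)$. Your conclusion is nevertheless right.

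Finally, your ``alternative'' argument is already complete, and you undersell it. Hypothesis $(i)$ also gives $N(L^0_\lambda)\subset R((L^0_\lambda)^\ast)^\perp$. This space has dimension $\dim N((L^0_\lambda)^\ast)$ because $(L^0_\lambda)^\ast$ is Fredholm of index $0$ as well, so the reverse inequality comes for free by symmetry. Thus hypothesis $(iii)$ of Lemma \ref{lemma-adjointcrit} follows from $(i)$ and $(ii)$. Neither your explicit kernel computation nor the paper's detour through $L_\lambda$ and $L^\ast_\lambda$ is actually needed.
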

\begin{proof}
This follows from Lemma \ref{lemma-adjointcrit}. We firstly note that $(i)$ follows from integration by parts. For $(ii)$ we know already from Lemma \ref{Lemma-FredInd0} that $L^0_\lambda$ is a Fredholm operator of index $0$ and the same argument applies to $(L^0_\lambda)^\ast$ as well, when noting that the codimension of $\mathcal{D}((L^0_\lambda)^\ast)$ in $H^1([-\tau_0,\tau_0],\mathbb{R}^d)$ is $\dim(E^u_\lambda(-\tau_0))+\dim(E^s_\lambda(\tau_0))=d$. Finally, $(iii)$ holds as
\[\dim N((L^0_\lambda)^\ast)=\dim N(L^\ast_\lambda)=\dim N(L_\lambda)=\dim N(L^0_\lambda),\]
where the first and the last equality are a consequence of the definition of $\mathcal{D}((L^0_\lambda)^\ast)$ and $\mathcal{D}(L^0_\lambda)$, respectively, and the middle equality is part of Theorem \ref{thm-FredholmIndex}.
\end{proof}
Note that for every $\lambda\in\Lambda$, there is a canonical map
\begin{align}\label{canonical}
i_\lambda: \cD(L_{\lambda}^0) \to H^1(\R,\R^d)
\end{align}
defined by extending a given function \( u \in \cD(L_{\lambda}^0) \) to the intervals $ (-\infty, -\tau_0)$ and $(\tau_0, \infty)$ as a solution of $\dot{x}=A_\lambda(t)x$. The map $i_\lambda$ is injective, and
\begin{equation}\label{injective-kernels}
    i_{\lambda}\left(N(L_{\lambda}^0)\right) = N(L_{\lambda}),
\end{equation}
which in particular implies that $\dim N(L_{\lambda}) =\dim N(L_{\lambda}^0)$. Moreover, there is a commutative diagram
\begin{align}\label{diagram-i}
    \begin{split}
    \xymatrix{
 H^1(\R,\R^d)\ar[r]^-{L_{\lambda}}& L^2(\R,\R^d)\ar[d]^p\\
    \cD(L_{\lambda}^0)\ar[r]^-{L_{\lambda}^0}\ar[u]_{i_{\lambda}}& L^{2}([-\tau_0,\tau_0],\R^d),}
    \end{split}
\end{align}
where $p$ denotes the restriction of functions in $L^2(\R,\R^d) $ to $L^2([-\tau_0,\tau_0],\R^d)$.\\
Let us recall that the index bundle can be defined for operator families having varying domains as explained in the final paragraph of Section \ref{subsec-indexbundle}. In the following lemma, we use the notation introduced in that section.
\begin{lemma}\label{lemma-dombundle}
There is a family of projections $P:\Lambda\rightarrow\mathcal{L}(H^1([-\tau_0,\tau_0],\R^d))$ such that
\begin{equation*}
H^1([-\tau_0,\tau_0],\R^d)_P=\left\{(\lambda,x)\in\Lambda\times H^1([-\tau_0,\tau_0],\R^d):\, x\in\mathcal{D}(L^0_\lambda)\right\},
\end{equation*}
where we use the notation introduced in \eqref{Banach-P}.
\end{lemma}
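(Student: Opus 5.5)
We need to construct a continuous family of bounded projections $P_\lambda$ on $X:=H^1([-\tau_0,\tau_0],\R^d)$ with range exactly $\mathcal{D}(L^0_\lambda)=N(T_\lambda)$. Here
\[T_\lambda x=\bigl(P^-_\lambda(-\tau_0)x(-\tau_0),\,(I_d-P^+_\lambda(\tau_0))x(\tau_0)\bigr)\]
is the map from the proof of Lemma \ref{Lemma-FredInd0}, with $x(-\tau_0)\in E^u_\lambda(-\tau_0)$ iff $P^-_\lambda(-\tau_0)x(-\tau_0)=0$, and similarly at $\tau_0$. The approach is to build a continuous right inverse of the boundary map and subtract the corresponding correction.

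Fix two smooth scalar cutoffs $\chi_-,\chi_+$ on $[-\tau_0,\tau_0]$ with $\chi_-(-\tau_0)=1$, $\chi_-(\tau_0)=0$, $\chi_+(-\tau_0)=0$, $\chi_+(\tau_0)=1$; for instance $\chi_\mp(t)=(\tau_0\mp t)/(2\tau_0)$. Abbreviate $Q^-_\lambda:=P^-_\lambda(-\tau_0)$ and $Q^+_\lambda:=I_d-P^+_\lambda(\tau_0)$, which are projections on $\R^d$ depending continuously on $\lambda$ by Theorem \ref{continuous-family-projections}. Define
\[(P_\lambda x)(t):=x(t)-\chi_-(t)\,Q^-_\lambda x(-\tau_0)-\chi_+(t)\,Q^+_\lambda x(\tau_0).\]

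The verification then proceeds in order.

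First, $P_\lambda$ is bounded on $X$. Point evaluations are continuous on $H^1$ in one dimension, and multiplying a constant vector by a smooth function gives an $H^1$ function whose norm is bounded by a constant times that vector.

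Second, the range of $P_\lambda$ lies in $\mathcal{D}(L^0_\lambda)$. Evaluating at $-\tau_0$ gives $(P_\lambda x)(-\tau_0)=(I_d-Q^-_\lambda)x(-\tau_0)$, which lies in $N(Q^-_\lambda)=E^u_\lambda(-\tau_0)$ by \eqref{continuous-family-projectionsII}. Likewise $(P_\lambda x)(\tau_0)=(I_d-Q^+_\lambda)x(\tau_0)=P^+_\lambda(\tau_0)x(\tau_0)\in E^s_\lambda(\tau_0)$.

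Third, $P_\lambda x=x$ for every $x\in\mathcal{D}(L^0_\lambda)$, since for such $x$ both $Q^-_\lambda x(-\tau_0)$ and $Q^+_\lambda x(\tau_0)$ vanish. Together with the second step this gives $P_\lambda^2=P_\lambda$ and $R(P_\lambda)=\mathcal{D}(L^0_\lambda)$, and hence the stated identity for $H^1([-\tau_0,\tau_0],\R^d)_P$ by \eqref{Banach-P}.

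Finally, $\lambda\mapsto P_\lambda$ is continuous in operator norm. The difference satisfies
\[\|P_\lambda-P_\mu\|\le C\bigl(|Q^-_\lambda-Q^-_\mu|+|Q^+_\lambda-Q^+_\mu|\bigr),\]
where $C$ depends only on the cutoffs and the norm of the trace map. Continuity therefore follows directly from the continuity of the finite-dimensional projections.

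The only point needing care is to use the continuous choice of the projectors $P^\pm_\lambda(\pm\tau_0)$ from Theorem \ref{continuous-family-projections}, together with the identification of their kernels and ranges with $E^u_\lambda(-\tau_0)$ and $E^s_\lambda(\tau_0)$. Beyond that, the argument is routine.
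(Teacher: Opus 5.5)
Your proposal is correct and is essentially the paper's proof. The paper uses the same linear-interpolation correction $(P_\lambda x)(t)=x(t)-\frac{\tau_0-t}{2\tau_0}P^-_\lambda(-\tau_0)x(-\tau_0)-\frac{\tau_0+t}{2\tau_0}(I_d-P^+_\lambda(\tau_0))x(\tau_0)$ and the same two checks: the range lies in $\mathcal{D}(L^0_\lambda)$, and $P_\lambda$ fixes $\mathcal{D}(L^0_\lambda)$. You additionally spell out boundedness on $H^1$ and operator-norm continuity in $\lambda$, which the paper leaves implicit.
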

\begin{proof}
By definition of $H^1([-\tau_0,\tau_0],\R^d)_P$ we need to construct a continuous family of projections
\[ P:\Lambda\rightarrow\mathcal{L}(H^1([-\tau_0,\tau_0],\mathbb{R}^d)) \text{ with }
R(P_\lambda)=\mathcal{D}(L^0_\lambda).
\]
Let $P^\pm:\Lambda\rightarrow\mathcal{L}(\mathbb{R}^d)$ be two families of projections such that $N(P^-_\lambda)=E^u_\lambda(-\tau_0)$ and $R(P^+_\lambda)=E^s_\lambda(\tau_0)$ as in Theorem \ref{continuous-family-projections}. Consider $P: \Lambda\to \mathcal{L}(H^1([-\tau_0,\tau_0],\mathbb{R}^d))$ given by
\begin{equation*}
(P_\lambda x)(t)=x(t)-\frac{\tau_0-t}{2\tau_0}P^-_\lambda x(-\tau_0)-\frac{\tau_0+t}{2\tau_0}(I_d-P^+_\lambda)x(\tau_0).
\end{equation*}
Then
\begin{align*}
(P_{\lambda}x)(-\tau_0)&=x(-\tau_0)-P^-_{\lambda}x(-\tau_0)=(I_d-P_{\lambda}^-)x(-\tau_0)\in E^u_{\lambda}(-\tau_0),\\
(P_{\lambda}x)(\tau_0)&=x(\tau_0)-(x(\tau_0)-P^+_{\lambda}x(\tau_0))=P_{\lambda}^+x(\tau_0)\in E^s_{\lambda}(\tau_0),
\end{align*}
which shows $R(P_\lambda)\subset\mathcal{D}(L^0_\lambda)$ for all $\lambda\in\Lambda$. As $P_\lambda x=x$ for all $x\in\mathcal{D}(L^0_\lambda)$,
it follows that $R(P_\lambda)=\mathcal{D}(L^0_\lambda)$ and $P^2_\lambda=P_\lambda$ for all $\lambda\in\Lambda$.
\end{proof}
Note that we obtain a corresponding bundle morphism
\begin{align*}
L_P:H^1([-\tau_0,\tau_0],\R^d)_P\rightarrow\Theta(L^2([-\tau_0,\tau_0],\mathbb{R}^d)),\quad (L_P)_\lambda=L^0_\lambda
\end{align*}
as the map
\[\Lambda\ni\lambda\mapsto \frac{d}{dt}-A_\lambda(\cdot)\in\mathcal{L}(H^1([-\tau_0,\tau_0],\R^d),L^2([-\tau_0,\tau_0],\R^d))\]
is continuous. Hence in summary
\[L:\Theta(H^1(\mathbb{R},\mathbb{R}^d))\rightarrow\Theta(L^2(\mathbb{R},\mathbb{R}^d))\quad\text{and}\quad L_P:H^1([-\tau_0,\tau_0],\R^d)_P\rightarrow\Theta(L^2(\mathbb{R},\mathbb{R}^d))\]
are Fredholm morphisms of index $0$ that are invertible over $\Lambda_0$. Thus $\ind(L)$ and $\ind(L_P)$ are defined as elements in $KO(\Lambda,\Lambda_0)$, and the aim of the next step is to equate these $K$-theory classes.
\subsubsection*{Step II: $\ind(L)=\ind(L_P)\in \KO(\Lambda,\Lambda_0)$}
For the construction of the index bundle as introduced in Section \ref{section-vb} we first need to find finite dimensional spaces $V \subseteq L^{2}(\R,\R^d)$ and $W\subseteq L^{2}([-\tau_0,\tau_0],\R^d)$ that are transversal to the images of the families $L$ and $L_P$, i.e.,
\begin{align*}
R(L_{\lambda}) + V &= L^{2}(\R,\R^d)\quad \text{ and }\quad R(L^0_{\lambda}) + W = L^{2}([-\tau_0,\tau_0],\R^d) \text{ for all } \lambda \in \Lambda.
\end{align*}
The general existence of such spaces has been stated in Section \ref{section-vb}, however, below we need a special form of them which is tailored to our argument and which we now describe.\\
Let $\lambda_0 \in \Lambda$ be fixed. As $ N(L_{\lambda_0})$ is closed, there exists a closed subspace
$E_{\lambda_0} \subseteq H^1(\R,\R^d)$ such that
\begin{equation*}
N(L_{\lambda_0}) \oplus E_{\lambda_0} = H^1(\R,\R^d).
\end{equation*}
Now consider the bounded linear operators
\begin{equation*}
C_{\lambda} : E_{\lambda_0} \times N(L_{\lambda_0}^{\ast}) \to L^{2}(\R,\R^d),
\end{equation*}
defined by
\begin{equation*}
C_{\lambda}(w, v) := L_{\lambda}w + v.
\end{equation*}
Since $N(L_{\lambda_0}^{\ast})=R(L_{\lambda_0})^\perp$, where we use that $L_{\lambda_0}^{\ast}$ is the Hilbert space adjoint of $L_{\lambda_0}$,
\begin{equation*}
C_{\lambda_0} \in \operatorname{GL}\left(E_{\lambda_0} \times N(L_{\lambda_0}^{\ast}), L^{2}(\R,\R^d)\right).
\end{equation*}
As $L$ is continuous and the set of bounded invertible operators is open, there is a neighbourhood $U_{\lambda_0} \subseteq \Lambda$ of $\lambda_0$ such that
\begin{equation*}
C_{\lambda} \in \operatorname{GL}(E_{\lambda_0} \times N(L_{\lambda_0}^{\ast}), L^2(\R,\R^d))
\text{ for all } \lambda \in U_{\lambda_0}.
\end{equation*}
Consequently,
\begin{equation*}
R(L_{\lambda}) + N(L_{\lambda_0}^{\ast}) = L^{2}(\R,\R^d) \text{ for all } \lambda \in U_{\lambda_0}.
\end{equation*}
Since $\Lambda$ is compact, it can be covered by a finite number of such neighbourhoods
\begin{equation*}\label{cover-U}
U_{\lambda_1}, \ldots, U_{\lambda_n} \subseteq \Lambda,
\end{equation*}
and we now set
\begin{equation}\label{space-V}
V := N(L_{\lambda_1}^{\ast}) + N(L_{\lambda_2}^{\ast}) + \cdots + N(L_{\lambda_n}^{\ast}) \subseteq L^{2}(\R,\R^d).
\end{equation}
Then $\dim V < \infty$, and for all $\lambda \in \Lambda$
\begin{equation}\label{transversalV}
R(L_{\lambda}) + V = L^{2}(\R,\R^d).
\end{equation}
Bearing in mind Lemma \ref{lemma-adjointL0}, we can repeat these arguments and see that
\begin{align}\label{transversalW}
R(L_{\lambda}^0)+ W= L^{2}([-\tau_0,\tau_0],\R^d),
\end{align}
where
\begin{align}\label{space-W}
W:=N((L_{\lambda_1}^0)^\ast)+\cdots +N((L_{\lambda_n}^0)^\ast).
\end{align}
Here we can indeed assume without loss of generality that the parameters $\lambda_1,\ldots,\lambda_n\in \Lambda$ are the same for $V$ and $W$, as we can add any finite number of spaces $N(L^\ast_\lambda)$ to $V$ and any finite number of spaces $N((L_{\lambda}^0)^{\ast})$ to $W$ without affecting the properties \eqref{transversalV} and \eqref{transversalW}.\\
Now the vector bundles
	\begin{align*}
		E(L,V)&:=\set{(\lambda,x)\in \Lambda\tm H^1(\R,\R^d)\mid L_{\lambda}x\in V},\\
		E(L_P,W)&:=\set{(\lambda,x)\in \Lambda\tm \cD(L_{\lambda}^0)\mid L_{\lambda}^0x\in W}
	\end{align*}
are defined and
\begin{align*}
\ind(L)&=[E(L,V),V,L|_{E(L,V)}]\in \KO(\Lambda,\Lambda_0),\\
\ind(L_P)&=[E(L_P,W),W,L_P|_{E(L_P,W)}]\in \KO(\Lambda,\Lambda_0).
\end{align*}
The particular choice of the spaces $V$ and $W$ now yields the following observation. Let us denote as in \eqref{diagram-i} by $p:L^2(\mathbb{R},\mathbb{R}^d)\rightarrow L^2([-\tau_0,\tau_0],\mathbb{R}^d)$ the restriction to the interval $[-\tau_0,\tau_0]$. We consider
\begin{equation*}
V_0:=\{\chi_{[-\tau_0,\tau_0]}u\mid\, u\in V\,\},\qquad V_1:=\{\chi_{\R\setminus[-\tau_0,\tau_0]}u\mid u\in V\,\},
\end{equation*}
and note that $V_0\cap V_1=\{0\}$ as well as $V\subset V_0\oplus V_1$. Moreover, the definition of the spaces $V$ and $W$ in \eqref{space-V}, \eqref{space-W} show that $p(V_0\oplus V_1)\subset W$, and the restriction $p\mid_{V_0}:V_0\rightarrow W$ is an isomorphism. We obtain from \eqref{diagram-i} a commutative diagram
\begin{align}\label{commdiag1}
\begin{split}
\xymatrix{
E(L,V_0\oplus V_1)_{\lambda}\ar[r]^-{L_{\lambda}}& V_0\oplus V_1\ar[d]^p\\
E(L_P,W)_{\lambda}\ar[u]_{(\iota_E)_{\lambda}}\ar[r]^-{L_{\lambda}^0}& W,
}
\end{split}
\end{align}
where $\iota_E:E(L_P,W)\rightarrow E(L,V_0\oplus V_1)$ is the bundle morphism induced by the canonical maps \eqref{canonical}.
Since $i_E:E(L_P,W)\to E(L,V_0\oplus V_1)$ is an injective bundle morphism, it follows that $E^0:=\iota_E(E(L_P,W))$ is a subbundle of $E(L,V_0\oplus V_1)$. Let $E^1$ be a complementary subbundle to $E^0$, i.e., $E(L,V_0\oplus V_1)=E^0\oplus E^1$. Now, for any \( \lambda \in \Lambda \), the diagram \eqref{commdiag1} is of the form
\begin{align*}
\begin{split}
\xymatrix{
E^0_{\lambda}\oplus E^1_{\lambda}\ar[r]^-{L_{\lambda}}& V_0\oplus V_1\ar[d]^p\\
E(L_P,W)_{\lambda}\ar[u]_{(i_E)_{\lambda}}\ar[r]^-{L_{\lambda}^0}& W
}
\end{split}
\end{align*}
and $L_{\lambda}: E^0_{\lambda}\oplus E^1_{\lambda}\to V_0\oplus V_1$ can be written as operator matrix
	$$
		L_{\lambda}
		=
		\begin{bmatrix}
			L_{\lambda}^{11}& L_{\lambda}^{12}\\
& \\
			L_{\lambda}^{21}& L_{\lambda}^{22}
		\end{bmatrix}.
	$$
Note that in the following lemma $(i)$ and $(iii)$ concern the operators on the whole space $\Lambda$, whereas $(ii)$ only holds on $\Lambda_0$.
\begin{lemma}
The above matrix elements of $L$ have the following properties:
\begin{itemize}
\item[$(i)$] The homomorphism $L_{\lambda}^{21}:E^0_{\lambda}\to V_1$ is trivial for all $\lambda\in\Lambda$.
\item[$(ii)$] $L_{\lambda}^{11}:E^0_{\lambda}\to V_0$ is an isomorphism for all $\lambda\in\Lambda_0$.
\item[$(iii)$] $L_{\lambda}^{22}:E^1_{\lambda}\to V_1$ is an isomorphism for all $\lambda\in\Lambda$.
\end{itemize}
\end{lemma}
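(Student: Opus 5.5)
Throughout, write the decomposition of $L_\lambda u$ according to $V_0\oplus V_1$ as $L_\lambda u=\chi_{[-\tau_0,\tau_0]}L_\lambda u+\chi_{\R\setminus[-\tau_0,\tau_0]}L_\lambda u$. All three parts rest on how the canonical extension $i_\lambda$ behaves outside $[-\tau_0,\tau_0]$, on the diagram \eqref{diagram-i}, and on the isomorphism $p|_{V_0}:V_0\to W$.

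For $(i)$, take $u\in E^0_\lambda$, so $u=i_\lambda(x)$ for some $x\in E(L_P,W)_\lambda$. By construction $i_\lambda(x)$ solves $\dot u=A_\lambda(t)u$ on $(-\infty,-\tau_0)$ and on $(\tau_0,\infty)$. Hence $L_\lambda u$ vanishes outside $[-\tau_0,\tau_0]$, its $V_1$-component $\chi_{\R\setminus[-\tau_0,\tau_0]}L_\lambda u$ is zero, and so $L^{21}_\lambda=0$ for every $\lambda\in\Lambda$.

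For $(ii)$, let $\lambda\in\Lambda_0$. Then both $L_\lambda$ and $L^0_\lambda$ are isomorphisms. For $L^0_\lambda$ this follows from \eqref{injective-kernels} together with Lemma \ref{Lemma-FredInd0}: the kernel is trivial and the index is $0$. Consequently $L^0_\lambda:E(L_P,W)_\lambda\to W$ is an isomorphism. By $(i)$ we have $L_\lambda\circ(i_E)_\lambda=L^{11}_\lambda\circ(i_E)_\lambda$, with values in $V_0$. Commutativity of \eqref{commdiag1} then gives
\[
p|_{V_0}\circ L^{11}_\lambda\circ(i_E)_\lambda=L^0_\lambda .
\]
Here $(i_E)_\lambda:E(L_P,W)_\lambda\to E^0_\lambda$ is bijective, $p|_{V_0}$ is an isomorphism, and $L^0_\lambda$ is an isomorphism, so $L^{11}_\lambda$ is an isomorphism as well.

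For $(iii)$, which I expect to be the main obstacle since it must hold on all of $\Lambda$, I will use a dimension count followed by an injectivity argument.

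\emph{Dimension count.} The bundle $E(L,V_0\oplus V_1)$ has rank $\dim V_0+\dim V_1$ by \eqref{ker-I-P}, because $V_0\oplus V_1\supset V$ is still transversal to $R(L_\lambda)$. The subbundle $E^0$ has rank $\dim W=\dim V_0$. Therefore $\operatorname{rank}E^1=\dim V_1$, and it suffices to prove that $L^{22}_\lambda$ is injective.

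\emph{Injectivity.} Suppose $e\in E^1_\lambda$ with $L^{22}_\lambda e=0$. Then $L_\lambda e$ has no $V_1$-component, so $e$ solves the homogeneous equation outside $[-\tau_0,\tau_0]$. Since $e\in H^1(\R,\R^d)$, it must decay at $\pm\infty$. By the dichotomy characterisation of $E^s_\lambda$ and $E^u_\lambda$, this forces $e(\tau_0)\in E^s_\lambda(\tau_0)$ and $e(-\tau_0)\in E^u_\lambda(-\tau_0)$. Hence $x:=e|_{[-\tau_0,\tau_0]}\in\mathcal{D}(L^0_\lambda)$ and $e=i_\lambda(x)$. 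Moreover, $L^0_\lambda x=p(L_\lambda e)\in p(V_0)=W$, so $x\in E(L_P,W)_\lambda$ and therefore $e\in E^0_\lambda$. As $E^0_\lambda\cap E^1_\lambda=\{0\}$, we get $e=0$.

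The delicate point in this last step is the decay-implies-boundary-condition argument. I would verify it by appealing to the definitions of $E^s_\lambda(\tau_0)$ and $E^u_\lambda(-\tau_0)$ via the limits of $\Phi_\lambda$, since an $H^1$ function that is uniformly continuous tends to $0$ at $\pm\infty$.
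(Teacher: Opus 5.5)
Your proof is correct and follows the paper's argument. Parts $(i)$ and $(iii)$ are the same: $L_\lambda u$ vanishes outside $[-\tau_0,\tau_0]$, and you combine the boundary conditions at $\pm\tau_0$ with the rank count $\operatorname{rank}E^1=\dim V_1$. You even make explicit the check $L^0_\lambda x=p(L_\lambda e)\in p(V_0)=W$, which the paper leaves implicit. The only variation is in $(ii)$: you factor $L^0_\lambda=p|_{V_0}\circ L^{11}_\lambda\circ(i_E)_\lambda$ and use invertibility of $L^0_\lambda$ on $\Lambda_0$, whereas the paper gets injectivity of $L^{11}_\lambda$ from that of $L_\lambda$ via $(i)$ and concludes with $\dim E^0_\lambda=\dim V_0$; both routes are valid.
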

\begin{proof}
We firstly consider $L_{\lambda}^{21}$ and let $u\in E^0_{\lambda}$. Since there exists $v\in E(L_P,W)_{\lambda}$ with $(i_E)_{\lambda}v=u$, it follows that $L_{\lambda}u$ has the property
\begin{equation*}
(L_{\lambda}u)(t)=0 \text{ for all } t\in (-\infty,-\tau_0)\cup (\tau_0,\infty),
\end{equation*}
which yields
\begin{equation*}
L_{\lambda}u=(L_{\lambda}^{11}u,L_{\lambda}^{21}u)\in V_0\oplus\{0\},
\end{equation*}
and therefore the operator $L_{\lambda}^{21}$ is trivial.\\
To show $(ii)$, we just need to note that $L_{\lambda}^{11}:E^0_\lambda\rightarrow V_0$ is injective for $\lambda\in\Lambda_0$ by $(i)$, as otherwise $\ker(L_\lambda)\neq \{0\}$ which contradicts that $L_\lambda$ is invertible for $\lambda\in\Lambda_0$. Now $\dim(V_0)=\dim(W)$ and the latter is equal to $\dim(E(L_P,W)_{\lambda})$ by \eqref{ker-I-P}. As $\dim(E(L_P,W)_{\lambda})=\dim (i_E)_{\lambda}(E(L_P,W)_{\lambda})=\dim(E^0_\lambda)$, we have $\dim(E^0_\lambda)=\dim(V_0)$ and thus $L_{\lambda}^{11}:E^0_\lambda\rightarrow V_0$ indeed is an isomorphism if $\lambda\in\Lambda_0$.\\
Finally, to see that $L_{\lambda}^{22}$ is an isomorphism, let us consider some $u\in E^1_\lambda\subset H^1(\mathbb{R},\mathbb{R}^d)$ such that $L^{22}_\lambda u=0$. Then
\begin{equation*}
\dot{u}(t)-A_\lambda(t)u(t)=0 \text{ for all } t\in(-\infty,-\tau_0) \text{ and for } t\in(\tau_0,\infty),
\end{equation*}
which particularly shows that
\begin{equation*}
u(-\tau_0)\in N(P_\lambda^-(-\tau_0)) \text{ and } u(\tau_0)\in R(P_\lambda^+(\tau_0)).
\end{equation*}
Hence, there exists an element $v\in E(L_P,W)_\lambda$ such that $(i_E)_\lambda(v)=u$, which implies that $u\in E^0_{\lambda}$. Consequently, $u\in E^0_\lambda\cap E^1_\lambda=\{0\}$ and thus $L_{\lambda}^{22}$ is injective. Moreover, as $\dim(V_0\oplus V_1)=\dim(E^0_{\lambda}\oplus E^1_{\lambda})$ by \eqref{ker-I-P} and $\dim(V_0)=\dim(E^0_\lambda)$ by the proof of $(ii)$, we have $\dim(V_1)=\dim(E^1_\lambda)$ and so $L^{22}_\lambda$ is an isomorphism for all $\lambda\in\Lambda$.
\end{proof}
By the previous lemma, we can now argue as in \cite[\S 3.3, Step 4]{NilsGeod} and note that the homotopy
\begin{equation*}\label{decomp-S}
h(s,\lambda)=\begin{bmatrix}
			L_{\lambda}^{11}& (1-s)\,L_{\lambda}^{12}\\
& \\
			0& L_{\lambda}^{22}
		\end{bmatrix}
\end{equation*}
has the property that $h(s,\lambda):E(L,V_0\oplus V_1)_\lambda\rightarrow V_0\oplus V_1$ is invertible for all $s\in[0,1]$ and $\lambda\in\Lambda_0$. Thus by \eqref{homotopy}
\begin{align*}
\ind(L)&=[E(L,V_0\oplus V_1),\Theta(V_0\oplus V_1),h(0,\cdot)]=[E(L,V_0\oplus V_1),\Theta(V_0\oplus V_1),h(1,\cdot)]\\
&=[E(L,V_0\oplus V_1),\Theta(V_0\oplus V_1),L^{11}\oplus L^{22}
]\\
&=[E^0\oplus E^1,\Theta(V_0)\oplus\Theta(V_1),L^{11}\oplus L^{22}]\in \KO(\Lambda,\Lambda_0).
\end{align*}
As $L^{22}:E^1\rightarrow\Theta(V_1)$ is a bundle isomorphism, it follows that
\begin{align*}
[E^0\oplus E^1,\Theta(V_0)\oplus\Theta(V_1),L^{11}\oplus L^{22}]&=[E^0,\Theta(V_0),L^{11}]+[E^1,\Theta(V_1), L^{22}]\\
&=[E^0,\Theta(V_0),L^{11}].
\end{align*}
Finally, the commutative diagram
	\begin{equation*}
		\xymatrix{
		E^0_{\lambda}\ar[r]^-{L_{\lambda}^{11}}& V_0\ar[d]^{p|_{V_0}}_{\cong}\\
		E(L_P,W)_{\lambda}\ar[u]^{(i_E)_{\lambda}}_{\cong}\ar[r]^-{L^0_{\lambda}}& W
		}
	\end{equation*}
yields by \eqref{ISO1}

\begin{align*}
\begin{split}
[E^0,\Theta(V_0),L^{11}]=[E(L_P,W),\Theta(W),L_P]\in \KO(\Lambda,\Lambda_0)
\end{split}
\end{align*}
and we indeed have shown that $\ind(L)=\ind(L_P)\in \KO(\Lambda,\Lambda_0)$ as claimed.
\subsubsection*{Step III: $\ind(L_P)=\ind(Q)\in \KO(\Lambda,\Lambda_0)$}
The aim of this step of the proof is to show that
\begin{equation*}\label{Step-3}
\ind(L_P)=\ind(Q)\in \KO(\Lambda,\Lambda_0),
\end{equation*}
where $Q=\{Q_\lambda\}_{\lambda\in\Lambda}$ are the operators
	\begin{equation*}
		Q_{\lambda}:\cD(Q_{\lambda})\subset H^1([-\tau_0,\tau_0],\R^d)\to L^{2}([-\tau_0,\tau_0],\R^d),\quad [Q_{\lambda}u](t)=\dot{u}(t)
	\end{equation*}
defined on the domains
	\begin{align*}\label{Qdomains}
		\cD(Q_{\lambda})=\set{u\in H^1([-\tau_0,\tau_0],\R^d)\mid u(-\tau_0)\in N(P_\lambda^-(0)),\, u(\tau_0)\in R(P_\lambda^+(0))}.
	\end{align*}
Let us firstly note that the operators $Q_\lambda$ are Fredholm of index $0$, which follows as for $L^0_\lambda$ in the proof of Lemma \ref{Lemma-FredInd0}. Moreover, the kernel $N(Q_\lambda)$ is isomorphic to $N(P_\lambda^-(0))\cap R(P_\lambda^+(0))=E^u_\lambda(0)\cap E^s_\lambda(0)$ and thus $Q_\lambda$ is an isomorphism for all $\lambda\in\Lambda_0$ by $(A2)$ and Theorem \ref{thm-invertible}. Consequently, $\ind(Q)\in \KO(\Lambda,\Lambda_0)$ is defined.\\
We now simplify notation by $\Phi_\lambda(t):=\Phi_\lambda(t,0)$ and consider the multiplication operators
	\begin{align*}
		M_{\lambda}&\in GL(L^2([-\tau_0,\tau_0],\R^d)),&
		[M_{\lambda} u](t)=\Phi_\lambda(t)u(t)\fall t\in [-\tau_0,\tau_0].
	\end{align*}
Then $M_{\lambda}^{-1}\circ L_{\lambda}^0\circ M_{\lambda}$ are Fredholm operators of index $0$ on the domains
	\begin{align*}
&\cD(M_{\lambda}^{-1}\circ L_{\lambda}^0 \circ M_{\lambda})=\{M_{\lambda}^{-1}u\in H^1([-\tau_0,\tau_0],\R^d): u\in \cD(L_{\lambda}^0)\,\}\\
&=\{M_{\lambda}^{-1}u\in H^1([-\tau_0,\tau_0],\R^d) : u(-\tau_0)\in N(P_\lambda^-(-\tau_0)),\, u(\tau_0)\in R(P_\lambda^+(\tau_0))\,\}\\
&=\{v\in H^1([-\tau_0,\tau_0],\R^d) : (M_{\lambda}v)(-\tau_0)\in N(P_\lambda^-(-\tau_0)),\, (M_{\lambda}v)(\tau_0)\in R(P_\lambda^+(\tau_0))\,\}\\
&=\set{v\in H^1([-\tau_0,\tau_0],\R^d): v(-\tau_0)\in M_{\lambda}^{-1}(-\tau_0)N(P_\lambda^-(-\tau_0)),\, v(\tau_0)\in M_{\lambda}^{-1}(\tau_0)R(P_\lambda^+(\tau_0))}\\
&=\set{v\in H^1([-\tau_0,\tau_0],\R^d):\, v(-\tau_0)\in N(P_\lambda^-(0)), v(\tau_0)\in R(P_\lambda^+(0))},
	\end{align*}
where we have used in the last equality that by \eqref{invariance}
\begin{equation*}
\Phi_\lambda(t)P_\lambda^\pm(0)\Phi_{\lambda}(t)^{-1}=P_{\lambda}^{\pm}(t) \text{ for all } t\in\R_\pm.
\end{equation*}
 Moreover, for $u\in\cD(M_{\lambda}^{-1}\circ  L_{\lambda}^0 \circ M_{\lambda})$ we have the identity
\begin{align*}
		[\left(M_{\lambda}^{-1}\circ  L_{\lambda}^0 \circ M_{\lambda}\right) u](t)
		&=
		\Phi_\lambda(t)^{-1}(\dot{\Phi}_\lambda(t)u(t)+\Phi_\lambda(t)\dot{u}(t)-A_\lambda(t)\Phi_\lambda(t)u(t))\\
		&=
		\dot{u}(t)+\Phi_\lambda(t)^{-1}(\dot{\Phi}_\lambda(t)-A_\lambda(t)\Phi_\lambda(t))u(t)
		=
		\dot{u}(t)
\end{align*}
a.e.\ on $\R$, which shows that $M_{\lambda}^{-1}\circ L_{\lambda}^0 \circ M_{\lambda}=Q_{\lambda}$. Thus, by $(i)$ and $(iii)$ in Proposition \ref{index-bundle},
\begin{align*}
		\ind(Q)=\ind(M^{-1}\diamond L_P \diamond M)=\ind(M^{-1})+\ind(L_P)+\ind(M)=\ind(L_P).
\end{align*}

\subsubsection*{Step IV: $\ind(Q)=[E^u(0)\oplus E^s(0),\Theta(\mathbb{R}^d),\mathcal{L}]\in \KO(\Lambda,\Lambda_0)$}
In this final step of the proof we show that
\begin{equation}\label{Step-4}
\ind(Q)=[E^u(0)\oplus E^s(0),\Theta(\mathbb{R}^d),\mathcal{L}],
\end{equation}
where the bundle homomorphism $\mathcal{L}:E^u(0)\oplus E^s(0)\rightarrow\Theta(\mathbb{R}^d)$ is defined by $\mathcal{L}(\lambda,u,v)=(\lambda,u-v)$.\\
Let us first recall the well-known fact that $L^{2}([-\tau_0,\tau_0],\R^d)=Y_0\oplus Y_1$, where $Y_0$ is the $d$-dimensional space of constant $\R^d$-valued functions and
\begin{align*}
	Y_1=\left\{u\in L^{2}([-\tau_0,\tau_0],\R^d) : \int^{\tau_0}_{-\tau_0}u(t)\d t=0\,\right\}.
\end{align*}
If we now set for $u\in Y_1$
\[
v(t)=\int^{t}_{-\tau_0}u(s)\d s,
\]
then $v\in H^{1}([-\tau_0,\tau_0],\R^d)$, $v(-\tau_0)=0\in N(P_\lambda^-(0))$ and $v(\tau_0)=0\in R(P_\lambda^+(0))$, which shows that $v\in\cD(Q_{\lambda})$. As $Q_\lambda v=u$, it follows that $Y_1\subset R(Q_\lambda)$ and thus $Y_0$ is transversal to the image of $Q$, i.e.,
\begin{equation*}
R(Q_{\lambda})+Y_0=L^{2}([-\tau_0,\tau_0],\R^d).
\end{equation*}
Consequently, the vector bundle $E(Q,Y_0)$ is defined and its fibres are given by
\begin{align*}
E(Q,Y_0)_\lambda&=Q_{\lambda}^{-1}Y_0=\{u\in\cD(Q_{\lambda}): \dot{u}\equiv\text{constant}\}\\
&=\left\{u_a^b(t):=\tfrac{1}{2}\left(1+\tfrac{t}{\tau_0}\right)b+\tfrac{1}{2}\left(1-\tfrac{t}{\tau_0}\right)a: a\in N(P_\lambda^-(0)),\,b\in R(P_\lambda^+(0))\right\}.
\end{align*}
Moreover, $Q_{\lambda}$ acts on the fibres $E(Q,Y_0)_\lambda$ into $Y_0$ by
\begin{equation*}
 Q_{\lambda}\left(u_a^b\right)=\frac{1}{2\tau_0}(b-a).
 \end{equation*}
Finally, we consider the commutative diagram
\begin{align*}
\xymatrix{
E(Q,Y_0)_{\lambda}\ar[r]^{Q_{\lambda}}\ar[d]_{e_{\lambda}}^{\cong}& Y_0\ar[d]^m_{\cong}\\
N(P_\lambda^-(0))\oplus R(P_\lambda^+(0))\ar[r]^-{\mathcal{L}_{\lambda}}&\R^d,
}
\end{align*}
where the homomorphisms $e_{\lambda}$, $\mathcal{L}_\lambda$ and $m$ are defined by
\begin{align*}
& e_{\lambda}(u):=(u(-\tau_0),u(\tau_0)),\quad m(u):=2\tau_0 u,\quad \mathcal{L}_{\lambda}(u,v)=v-u.
\end{align*}
As $N(P_\lambda^-(0))=E^u_\lambda(0)$ and $R(P_\lambda^+(0))= E^s_\lambda(0)$, $\lambda\in\Lambda$, this shows \eqref{Step-4} and finally completes the proof of Theorem \ref{thm-main-lin}.


\section{An Example}\label{section-examples}
In this section, we consider the systems
\begin{equation}\label{lin-HamiltonII}
\left\{
\begin{array}{l}
\dot{u}(t)= A_\lambda(t)u(t), \\
\lim\limits_{t\to\pm\infty} u(t)=0
\end{array}
\right.
\end{equation}
where, for all $\lambda$ in some compact topological space $\Lambda$,
\[
A_\lambda(t)=
\begin{cases}
a(t)\, B_\lambda & \text{for } t\le 0,\\
a(t)\, C_\lambda & \text{for } t\ge 0,
\end{cases}
\]
and
\begin{itemize}
\item[(i)] $a:\R\rightarrow(-\infty,0]$ is continuous, $a(0)=0$, and there are $a^\pm>0$ and $t_0>0$ such that
\[-a^-\leq a(t)\leq -a^+,\quad |t|\geq t_0.\]
\item[(ii)] $\{B_\lambda\}_{\lambda\in\Lambda}$ and $\{C_\lambda\}_{\lambda\in\Lambda}$ are continuous families of symmetric matrices in $\GL(d,\mathbb{R})$ for some $d\geq 2$.
\item[(iii)] for some $1\leq k\leq d-1$, the matrices $B_\lambda$, $C_\lambda$ have $k$ positive and $d-k$ negative eigenvalues, where multiplicities are taken into account.
\end{itemize}
The families of transition matrices $\{\Phi^-_\lambda\}_{\lambda\in\Lambda}$ on $(-\infty,0]$ and $\{\Phi^+_\lambda\}_{\lambda\in\Lambda}$ on $[0,\infty)$ are given by
\[\Phi^-_\lambda(t,s)=\exp\left(\int^t_s a(r)\, dr\, B_\lambda\right)\quad\text{and}\quad \Phi^+_\lambda(t,s)=\exp\left(\int^t_s a(r)\, dr\, C_\lambda\right).\]
We set for $\lambda\in\Lambda$
\begin{equation}
P^-_\lambda=\chi_{(0,\infty)}(B_\lambda),\quad P^+_\lambda=\chi_{(0,\infty)}(C_\lambda),
\end{equation}
where we use functional calculus and $\chi_{(0,\infty)}$ is a characteristic function. Note that the families $\{P^\pm_\lambda\}_{\lambda\in\Lambda}$ depend continuously on the parameter $\lambda$ as the matrices $B_\lambda$, $C_\lambda$ are invertible by assumption. Let now $\lambda\in\Lambda$ be fixed and let
\begin{align*}
\mu_1(B_\lambda)\geq\cdots\geq\mu_{k}(B_\lambda)&>0>\mu_{k+1}(B_\lambda)\geq\cdots\geq\mu_d(B_\lambda)\\
\mu_1(C_\lambda)\geq\cdots\geq\mu_{k}(C_\lambda)&>0>\mu_{k+1}(C_\lambda)\geq\cdots\geq\mu_d(C_\lambda)
\end{align*}
be the eigenvalues of $B_\lambda$, $C_\lambda$, as well as $\{b_{1,\lambda},\ldots,b_{d,\lambda}\}$, $\{c_{1,\lambda},\ldots,c_{d,\lambda}\}$ orthonormal bases of $\R^d$ such that $B_\lambda b_{l,\lambda}=\mu_l(B_\lambda) b_{l,\lambda}$ and $C_\lambda c_{l,\lambda}=\mu_l(C_\lambda) c_{l,\lambda}$ for $1\leq l\leq d$. Then we have for $x\in\mathbb{R}^d$
\begin{align*}
P^-_\lambda x=\sum^k_{i=1}\langle x,b_{i,\lambda}\rangle b_{i,\lambda},\qquad P^+_\lambda x=\sum^k_{i=1}\langle x,c_{i,\lambda}\rangle c_{i,\lambda}
\end{align*}
and obtain for $t,s\in(-\infty,0]$
\[
P^-_\lambda(\Phi^-_\lambda(t,s)x)=\Phi^-_\lambda(t,s)P^-_\lambda x,\quad x\in\R^d,
\]
as well as for $t,s\in[0,\infty)$
\begin{align*}
P^+_\lambda(\Phi^+_\lambda(t,s)x)&=\Phi^+_\lambda(t,s)P^+_\lambda x,\quad x\in\R^d.
\end{align*}
Thus $P^-_\lambda$ is an invariant projector for $\dot{x}=A_\lambda(t)x$ on $(-\infty,0]$, and $P^+_\lambda$ on $[0,\infty)$. If we use that $(I_d-P^-_\lambda)x=\sum^{d}_{i=k+1}\langle x,b_{i,\lambda}\rangle b_{i,\lambda}$, it is readily seen that
\begin{equation*}
\|\Phi^-_\lambda(t,s)P^-_\lambda\|\leq e^{\mu(B_\lambda) a^+(s-t)}\quad\text{and}\quad \|\Phi^-_\lambda(s,t)(I_d-P^-_\lambda)\|\leq e^{\mu(B_\lambda) a^+(s-t)}
\end{equation*}
for all $s\leq t$ with $t,s\in (-\infty,-t_0]$, where $\mu(B_\lambda)=\min_{i=1,\ldots,d}|\mu_i(B_\lambda)|>0$ and we have used the upper bound in $(i)$. Similarly,
\begin{equation*}
\|\Phi^+_\lambda(t,s)P^+_\lambda\|\leq e^{\mu(C_\lambda) a^+(s-t)}\quad\text{and}\quad \|\Phi^+_\lambda(s,t)(I_d-P^+_\lambda)\|\leq e^{\mu(C_\lambda) a^+(s-t)}
\end{equation*}
for all $s\leq t$ with $t,s\in [t_0,\infty)$, where $\mu(C_\lambda)=\min_{i=1,\ldots,d}|\mu_i(C_\lambda)|>0$. Note that, as recalled in Section \ref{section-expdich}, it already follows from these estimates on $(-\infty,-t_0]$ and $[t_0,\infty)$ that $(A1)$ holds, i.e., $\dot{x}=A_\lambda(t) x$ has an exponential dichotomy on both half-axes $\R^-_0$ and $\R^+_0$. For later reference, we note that the reader can easily check that actually
\begin{equation*}
\|\Phi^-_\lambda(t,s)P^-_\lambda\|\leq K_B e^{\alpha_B(s-t)},\quad \|\Phi^-_\lambda(s,t)(I_d-P^-_\lambda)\|\leq K_B e^{\alpha_B(s-t)},\quad t,s\in(-\infty,0],\, t\geq s,
\end{equation*}
for $K_B=e^{\mu(B_\lambda) a^+t_0}$ and $\alpha_B=\mu(B_\lambda) a^+$, and corresponding inequalities hold on $[0,\infty)$ for $\Phi^+_\lambda$ and $K_C=e^{\mu(C_\lambda) a^+t_0}$, $\alpha_C=\mu(C_\lambda) a^+$.\\ 
The stable and unstable subspaces are
\begin{align*}
E^u_\lambda(0)=N(P^-_\lambda)=R(\chi_{(-\infty,0)}(B_\lambda)),\qquad
E^s_\lambda(0)=R(P^+_\lambda)=R(\chi_{(0,\infty)}(C_\lambda))
\end{align*}
and, as $\dim(E^u_\lambda(0))+\dim(E^s_\lambda(0))=d-k+k=d$, it follows from Theorem \ref{thm-invertible} that there is an exponential dichotomy on all of $\mathbb{R}$ for $\dot{x}=A_\lambda(t) x$ if and only if
\begin{align}\label{transversality}
E^u_\lambda(0)\cap E^s_\lambda(0)=\{0\}.
\end{align}
Thus, if $\Lambda_0\neq\emptyset$ is a closed subset of $\Lambda$ such that \eqref{transversality} holds for all $\lambda\in\Lambda_0$, then the index bundle $\ind(L)$ of the corresponding family \eqref{L} is defined and given by
\begin{align}\label{indexam}
\ind(L)=[E^-(B)\oplus E^+(C),\Theta(\mathbb{R}^d),\mathcal{L}]\in \KO(\Lambda,\Lambda_0),
\end{align}
where $E^-(B)\oplus E^+(C)$ is the vector bundle over $\Lambda$ induced by the family of projections
\[
\{\chi_{(-\infty,0)}(B_\lambda))\oplus \chi_{(0,\infty)}(C_\lambda))\}_{\lambda\in\Lambda}
\]
in $\mathbb{R}^{2d}$. Note that the fibres
\begin{align*}
E^-(B)_\lambda=\bigoplus_{\mu<0}N(\mu I_d-B_\lambda),\qquad
E^+(C)_\lambda=\bigoplus_{\mu>0}N(\mu I_d-C_\lambda)
\end{align*}
are made by the eigenspaces of the symmetric matrices $B_\lambda$ and $C_\lambda$.\\
Let us now consider the special case that $E^u_\lambda(0)\perp E^s_\lambda(0)$ for all $\lambda\in\Lambda_0$. Then $N(P^-_\lambda)\perp R(P^+_\lambda)$ which shows that $P^-_\lambda=P^+_\lambda$ for $\lambda\in\Lambda_0$. Thus the invariant projector $P_\lambda:=P^-_\lambda=P^+_\lambda$ yields an exponential dichotomy for $\dot{x}=A_\lambda(t) x$ on all of $\mathbb{R}$, where $K:=\max\{K_B,K_C\}$ and $\alpha:=\min\{\alpha_B,\alpha_C\}$. Then, if $Q:\Lambda\times\mathbb{R}\rightarrow\Mat(d,\mathbb{R})$ is continuous and
\begin{align}\label{Qbound}
\sup_{(\lambda,t)\in\Lambda\times\R}|Q_\lambda(t)|<\frac{\alpha}{4 K^2},
\end{align}
the equations
\begin{align}\label{perturbed}
\dot{x}=(A_\lambda(t)+Q_\lambda(t))x
\end{align}
satisfy $(A1)$ and $(A2)$ by Theorem \ref{roughness}. As the same is true for the matrix family $A_\lambda(t)+s\,Q_\lambda(t)$, $s\in[0,1]$, we see from the homotopy invariance in Proposition \ref{index-bundle} that the index bundle of the corresponding operators \eqref{L} for the perturbed problem \eqref{perturbed} is given by \eqref{indexam} as well.\\
The final aim of this section is to consider numerical examples of the above equations for $d=2$ and one-dimensional parameter spaces. We begin by $\Lambda=S^1$, $\Lambda_0=\{-1\}$ and
\begin{align}\label{ex-BC}
B_\lambda=\begin{pmatrix}
-1&0\\
0&1
\end{pmatrix},\qquad C_\lambda=\begin{pmatrix}
\cos\theta&\sin\theta\\
\sin\theta&-\cos\theta
\end{pmatrix},\,\lambda=e^{i\theta},\,0\leq\theta\leq 2\pi.
\end{align}
Note that for $\lambda_0=-1\in S^1$, we now indeed have that $P^-_{\lambda_0}=P^+_{\lambda_0}$ is an exponential dichotomy on all of $\mathbb{R}$, which shows $(A2)$ for $\Lambda_0=\{-1\}$. Moreover, $\mu_1(B_\lambda)=\mu_1(C_\lambda)=1$, $\mu_2(B_\lambda)=\mu_2(C_\lambda)=-1$, and
\[
E^-(B)_\lambda=\spann\left\{\begin{pmatrix}
1\\
0
\end{pmatrix}\right\},\quad E^+(C)_\lambda=\spann{\renewcommand{\arraystretch}{1.3}\left\{\begin{pmatrix}
\cos\frac{\theta}{2}\\\sin\frac{\theta}{2}
\end{pmatrix}\right\}}.
\]
If we now apply the map \eqref{alpha} to \eqref{indexam}, we obtain for the corresponding operators \eqref{L}
\begin{align*}
\Psi_{\lambda_0}(\ind(L))=[E^-(B)\oplus E^+(C)]-[\Theta(\R^2)]=[E^+(C)]-[\Theta(\R)]\neq 0\in\widetilde{\KO}(S^1)\cong\mathbb{Z}_2,
\end{align*}
as $E^+(C)$ is the M\"obius bundle over $S^1$. Consequently, $\Psi_{\lambda_0}(\ind(L))$ generates $\widetilde{\KO}(S^1)$ and in particular is non-trivial. That the index bundle $\Psi_{\lambda_0}(\ind(L))$ is non-trivial for this choice of $B_\lambda$ and $C_\lambda$ was already obtained by Pejsachowicz in \cite[\S 4]{JacoboAMS} under the additional assumption $\lim_{t\rightarrow-\infty}a(t)=\lim_{t\rightarrow\infty}a(t)=-1$, which makes $A_\lambda(t)$ asymptotically hyperbolic as in Corollary \ref{cor-Jacobo}. Here we can go even further beyond this assumption by considering perturbations \eqref{perturbed}, where $Q$ satisfies \eqref{Qbound} for the constants $\alpha$ and $K$ that depend on the numbers $t_0$ and $a^+$ in $(i)$. \\
Next, let us consider for $\Lambda=[0,\pi]$ and $\Lambda_0=\{0,\pi\}$ the matrices
\begin{align}\label{BC}
B_\theta=\begin{pmatrix}
\cos\theta&-\sin\theta\\
-\sin\theta&-\cos\theta
\end{pmatrix}\quad\text{and}\quad C_\theta=\begin{pmatrix}
\cos\theta&\sin\theta\\
\sin\theta&-\cos\theta
\end{pmatrix},\,\,0\leq\theta\leq \pi.
\end{align}
Then $P^-_{\theta}=P^+_{\theta}$ for $\theta=0$ and $\theta=\pi$, which in particular shows that $(A2)$ holds for $\Lambda_0=\{0,\pi\}$. Moreover, as above, $\mu_1(B_\theta)=\mu_1(C_\theta)=1$, $\mu_2(B_\theta)=\mu_2(C_\theta)=-1$, but now
\begin{align}\label{basis}
E^-(B)_\theta=\spann{\renewcommand{\arraystretch}{1.3}\left\{\begin{pmatrix}
\sin\frac{\theta}{2}\\
\cos\frac{\theta}{2}
\end{pmatrix}\right\}},\quad E^+(C)_\theta=\spann{\renewcommand{\arraystretch}{1.3}\left\{\begin{pmatrix}
\cos\frac{\theta}{2}\\\sin\frac{\theta}{2}
\end{pmatrix}\right\}}.
\end{align}
Let us emphasize that, even though we can easily extend $\{B_\theta\}_{\theta\in[0,\pi]}$ and $\{C_\theta\}_{\theta\in[0,\pi]}$ to families parametrised by $S^1$, Theorem \ref{thm-main-lin} and \eqref{alpha} only yield in this case
\begin{align*}
\Psi_{\lambda_0}(\ind(L))&=[E^-(B)\oplus E^+(C)]-[\Theta(\R^2)]\\
&=([E^-(B)]-[\Theta(\R)])+([E^+(C)]-[\Theta(\R)])=0\in\widetilde{\KO}(S^1)\cong\mathbb{Z}_2,
\end{align*}
as $E^+(C)$ and $E^-(B)$ are both isomorphic to the M\"obius bundle over $S^1$. On the other hand, if we consider as before $\Lambda=[0,\pi]$, $\Lambda_0=\{0,\pi\}$, then $(A3)$ holds as $\Lambda$ is contractible, and we obtain from Corollary \ref{cor-main-triv}
\[\ind(L)=[\Theta(\R^2),\Theta(\R^2),\mathcal{M}],\]
where
\[\mathcal{M}_\theta={\renewcommand{\arraystretch}{1.3}\begin{pmatrix}
\sin\frac{\theta}{2}&\cos\frac{\theta}{2}\\
\cos\frac{\theta}{2}&\sin\frac{\theta}{2}
\end{pmatrix}},
\]
and $\mathcal{L}_D(\theta)=\sin^2\frac{\theta}{2}-\cos^2\frac{\theta}{2}=-\cos(\theta)$, $\theta\in[0,\pi]$, in Corollary \ref{cor-main-contractible}. Consequently, it follows from the latter corollary that $\ind(L)\in\KO([0,\pi],\{0,\pi\})$ is non-trivial, or in other words
\begin{equation}\label{beta-non-trivial}
\psi_{0,\pi}(\ind(L))=1\in\mathbb{Z}_2
\end{equation}
under the isomorphism $\psi_{0,\pi}$ in \eqref{beta}.\\
Note that the matrices $A_\lambda(t)$ in \eqref{lin-HamiltonII} are asymptotically hyperbolic under our assumptions if the limits $\lim_{t\rightarrow\pm\infty} A_\lambda(t)$ exist, in which case the above yields a non-trivial example of Hu and Portaluri's Theorem 1 in \cite{HuP}. Here we can again not only lift the existence of the limits, but even consider perturbations \eqref{perturbed}, where $Q$ satisfies \eqref{Qbound} for the constants $\alpha$ and $K$ that depend on the numbers $t_0$ and $a^+$ in $(i)$.

\section{Bifurcation of Homoclinic Orbits}\label{section-bifurcation}

\subsection{The Bifurcation Theorem and Corollaries}
In this final part of our work, we give a first application of Theorem \ref{thm-main-lin} to multiparameter bifurcation of homoclinic orbits, where thus we consider
\begin{equation}\label{Nonlinear}
\left\{
\begin{array}{l}
\dot{u}(t)= g(\lambda,t,u(t)), \\
\lim\limits_{t\to\pm\infty} u(t)=0,
\end{array}
\right.
\end{equation}
for a continuous map $g:\Lambda\times\mathbb{R}\times\mathbb{R}^d \rightarrow\mathbb{R}^d$ such that the partial derivative $D_ug$ exists and is continuous on $\Lambda\times\mathbb{R}\times\mathbb{R}^d$ as well. Moreover, we assume that $g$ and $D_ug$ are bounded, as well as $g(\lambda,t,0)=0$ for all $(\lambda,t)\in \Lambda\times\mathbb{R}$. In particular, $u\equiv0$ is a solution of \eqref{Nonlinear} for all $\lambda\in\Lambda$.\\
We call $\lambda^\ast\in\Lambda$ a bifurcation point if in every neighbourhood of $(\lambda^\ast,0)\in \Lambda\times H^1(\mathbb{R},\mathbb{R}^d)$, there is some $(\lambda,u)$ such that $u$ is a solution of \eqref{Nonlinear} and $u\neq 0$. Henceforth, $\mathcal{B}\subset\Lambda$ denotes the set of all bifurcation points of \eqref{Nonlinear}. The linearisations of \eqref{Nonlinear} are
\begin{equation}\label{Linear}
\left\{
\begin{array}{l}
\dot{u}(t)= A_\lambda(t)u(t), \\
\lim\limits_{t\to\pm\infty} u(t)=0,
\end{array}
\right.
\end{equation}
where
\begin{align}\label{Alambda}
A_\lambda(t):=D_ug(\lambda,t,0)\in\Mat(d,\mathbb{R}).
\end{align}
In what follows we let $\Sigma\subset\Lambda$ denote those $\lambda\in\Lambda$ for which \eqref{Linear} has other solutions than the trivial one $u\equiv 0$. Note that $(A2)$ implies that $\Lambda_0\cap\Sigma=\emptyset$.\\
Our main theorem on bifurcation for \eqref{Nonlinear} is as follows.
\begin{theorem}\label{main-bif}
Assume that $(A1)$, $(A2)$ and $(A3)$ hold for the equations \eqref{Linear} parameterised by a simply connected space $\Lambda$. If there are $\lambda_0,\lambda_1\in\Lambda_0$ such that
\[\mathcal{L}_D(\lambda_0)\cdot\mathcal{L}_D(\lambda_1)<0,\]
then the bifurcation set $\mathcal{B}$ disconnects $\Lambda$.
\end{theorem}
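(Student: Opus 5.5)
We argue by contradiction: suppose $\Lambda\setminus\mathcal{B}$ is connected. Since $\lambda_0,\lambda_1\in\Lambda_0$ and $\Lambda_0\cap\Sigma=\emptyset$, while $\mathcal{B}\subset\Sigma$ by the implicit function theorem (at $\lambda\notin\Sigma$ the operator $L_\lambda$ is invertible, so the zeros of the nonlinear map near $(\lambda,0)$ are trivial), both $\lambda_0$ and $\lambda_1$ lie in $\Lambda\setminus\mathcal{B}$. The plan is to use connectedness of $\Lambda\setminus\mathcal{B}$ to transport the nontrivial index information from $(\Lambda,\{\lambda_0,\lambda_1\})$ to a relative class that vanishes. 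This yields the contradiction.

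The first step sets up the nonlinear operator $G:\Lambda\times H^1(\R,\R^d)\to L^2(\R,\R^d)$, $G(\lambda,u)=\dot u-g(\lambda,\cdot,u)$. We check, using boundedness of $g$ and $D_ug$, that $G$ is continuous and $C^1$ in $u$ with $D_uG(\lambda,0)=L_\lambda$. By Theorem \ref{thm-FredholmIndex} these are Fredholm of index $0$. Next, Corollary \ref{cor-main-triv} together with Proposition \ref{beta-iso} gives, for the restriction of the family to the pair $(\Lambda',\{\lambda_0,\lambda_1\})$ with $\Lambda'$ a suitable contractible compact neighbourhood or path, that $\psi_{\lambda_0,\lambda_1}(\ind(L))=1$ exactly when $\mathcal{L}_D(\lambda_0)\mathcal{L}_D(\lambda_1)<0$. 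This is Corollary \ref{cor-main-contractible}.

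The key step uses the assumed connectedness. Since $\Lambda\setminus\mathcal{B}$ is open and connected in a locally path-connected setting, which we would assume and justify for the spaces in question, there is a path $\gamma:[0,1]\to\Lambda\setminus\mathcal{B}$ from $\lambda_0$ to $\lambda_1$. We pull back along $\gamma$. Assumption $(A3)$ survives pullback, and naturality of the index bundle under the map $\gamma:([0,1],\{0,1\})\to(\Lambda,\{\lambda_0,\lambda_1\})$ gives $\psi_{0,1}(\ind(L\circ\gamma))=1$, because $\mathcal{L}_D\circ\gamma$ has opposite signs at the endpoints. By the relation between $\ind$ on $([0,1],\{0,1\})$ and the parity explained in Section \ref{section-singlepar}, the parity $\sigma(L\circ\gamma,[0,1])$ is nontrivial. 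The Fitzpatrick–Pejsachowicz theorem quoted in the introduction then produces a bifurcation point $\gamma(s^\ast)$ with $s^\ast\in(0,1)$. This bifurcation point is a point of $\mathcal{B}$ lying on $\gamma$, which contradicts $\gamma\subset\Lambda\setminus\mathcal{B}$. Here bifurcation along the path implies bifurcation in $\Lambda$ directly from the definitions.

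The main obstacle is the topological point that connectedness of $\Lambda\setminus\mathcal{B}$ yields a path joining $\lambda_0$ and $\lambda_1$. Openness of $\Lambda\setminus\mathcal{B}$ follows since $\mathcal{B}$ is closed, as a limit of bifurcation points is one. To get path-connectedness, the first thing we would try is assuming $\Lambda$ is locally path connected, for instance a compact ANR or manifold. If that is unavailable, we would replace paths by arguing with relative $\KO$ on a compact connected neighbourhood inside $\Lambda\setminus\mathcal{B}$, using that the index bundle vanishes wherever $L$ stays invertible along $\Sigma$-free regions. The difficulty is that $\Sigma\neq\mathcal{B}$ in general, so the path approach via the parity is preferable. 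The role of simple connectivity is presumably to guarantee $(A3)$-type orientation data and that the sign comparison of $\mathcal{L}_D$ is independent of the chosen frames. We would check this by observing that $\sgn\mathcal{L}_D(\lambda_0)\mathcal{L}_D(\lambda_1)$ is unchanged under a continuous change of frames on $\Lambda$.
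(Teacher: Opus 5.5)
This is essentially the paper's proof: you pull $L$ back along a path $\gamma$ from $\lambda_0$ to $\lambda_1$, get $\psi_{0,1}(\ind(L_\gamma))=1$ from Corollary \ref{cor-main-contractible} with the frames $s_i\circ\gamma$, pass to $\sigma(L_\gamma,[0,1])=1$ and apply Theorem \ref{Rabier}. You are right that simple connectivity is never needed, because the global frame in $(A3)$ already makes the sign test path-independent, so the paper's concatenation step is redundant; and the local path-connectedness caveat you raise is one the paper passes over silently. The one step to re-source is the link between the index bundle and the parity. Section \ref{section-singlepar} only identifies $\psi_{a,b}(\ind(L))$ with Hu--Portaluri's clutching invariant, which the paper explicitly warns is not a priori the Fitzpatrick--Pejsachowicz parity used in Theorem \ref{Rabier}; cite Theorem \ref{thm-parityindbundle} together with Proposition \ref{beta-iso} there instead.
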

Let us first emphasize the following topological implications about the bifurcation set $\mathcal{B}$ under additional assumptions on $\Lambda$. 
\begin{corollary}\label{cor-bifurcation-topology}
Let the assumptions of Theorem \ref{main-bif} hold and assume in addition that $\Lambda$ is a compact connected manifold of dimension $n\geq 2$ with (possibly empty) boundary $\partial \Lambda$. Then the (Lebesgue covering-) dimension of $\mathcal{B}$ is at least $n-1$, and if $\partial \Lambda\cap\Sigma=\emptyset$, $\mathcal{B}$ is not contractible as a topological space.
\end{corollary}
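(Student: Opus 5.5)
The corollary follows from Theorem \ref{main-bif}, which says that $\mathcal{B}$ disconnects $\Lambda$, combined with dimension theory for manifolds and a duality argument. I take ``$\mathcal{B}$ disconnects $\Lambda$'' to mean that $\Lambda\setminus\mathcal{B}$ is not connected. From the proof of Theorem \ref{main-bif} we also know that $\lambda_0$ and $\lambda_1$ lie in different components of $\Lambda\setminus\mathcal{B}$, because $\mathcal{L}_D$ changes sign. First I check that $\mathcal{B}$ is closed. It is the closure-type set of points near which nontrivial solutions accumulate, so it is closed by definition. Moreover $\mathcal{B}\subset\Sigma$ by the implicit function theorem, and $\Sigma\cap\Lambda_0=\emptyset$.

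\textbf{Dimension bound.} I would use the classical theorem of Hurewicz–Wallman: a closed subset separating a connected $n$-manifold has covering dimension at least $n-1$. More precisely, if $\dim\mathcal{B}\le n-2$, then $M\setminus\mathcal{B}$ is connected for every connected $n$-manifold $M$. Boundary points cause a small problem. I would pass to the interior, or double $\Lambda$ along $\partial\Lambda$, or note that $\Lambda\setminus\partial\Lambda$ is a connected manifold without boundary. The two separated points can be pushed slightly into the interior while staying in their components, since the components of $\Lambda\setminus\mathcal{B}$ are open. Then $\mathcal{B}\cap\mathrm{int}\,\Lambda$ separates $\mathrm{int}\,\Lambda$, which gives $\dim\mathcal{B}\ge n-1$.

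\textbf{Non-contractibility when $\partial\Lambda\cap\Sigma=\emptyset$.} In this case $\mathcal{B}$ is a compact subset of the interior, so it lies in an open manifold $M=\mathrm{int}\,\Lambda$, or in $\Lambda$ itself if $\partial\Lambda=\emptyset$. I would apply Alexander–Lefschetz duality with $\mathbb{Z}_2$ coefficients, which needs no orientability: $\check H^{n-1}(\mathcal{B};\mathbb{Z}_2)\cong H_1(M,M\setminus\mathcal{B};\mathbb{Z}_2)$. The long exact sequence gives
\[
H_1(M)\to H_1(M,M\setminus\mathcal{B})\to \tilde H_0(M\setminus\mathcal{B})\to \tilde H_0(M)=0,
\]
so $H_1(M,M\setminus\mathcal{B})$ surjects onto $\tilde H_0(M\setminus\mathcal{B})\neq 0$. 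Hence $\check H^{n-1}(\mathcal{B};\mathbb{Z}_2)\neq0$. Since $n-1\ge1$, a contractible compact set would have trivial reduced Čech cohomology, so $\mathcal{B}$ is not contractible. When $\partial\Lambda\neq\emptyset$, the separation statement has to be transferred to $M=\mathrm{int}\,\Lambda$. This works because $\partial\Lambda$ has a collar and misses $\mathcal{B}$, so the components of $\Lambda\setminus\mathcal{B}$ restrict to components of $M\setminus\mathcal{B}$.

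\textbf{Main obstacle.} The delicate point is handling the boundary correctly: making sure the separation persists in the interior and that the version of duality used is valid for compact subsets of a possibly non-compact, possibly non-orientable manifold. Using $\mathbb{Z}_2$ coefficients and Čech cohomology addresses both issues.
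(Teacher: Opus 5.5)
Your argument is correct, and half of it takes a different route from the paper.

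\textbf{Non-contractibility.} Here you do essentially what the paper does: the exact sequence of the pair together with $\mathbb{Z}_2$-duality gives $\check H^{n-1}(\mathcal{B};\mathbb{Z}_2)\neq 0$. The only difference is where you work. You pass to $M=\mathrm{int}\,\Lambda$ and use absolute Alexander duality $\check H^{n-1}(\mathcal{B})\cong H_1(M,M\setminus\mathcal{B})$. The paper stays on $\Lambda$, invokes Poincar\'e--Lefschetz duality $H_1(\Lambda,\Lambda\setminus\mathcal{B};\mathbb{Z}_2)\cong\check H^{n-1}(\mathcal{B},\mathcal{B}\cap\partial\Lambda;\mathbb{Z}_2)$, and specialises to the absolute group once $\mathcal{B}\cap\partial\Lambda=\emptyset$.

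\textbf{Dimension bound.} This is the genuine difference.
\begin{itemize}
\item The paper reads the bound off the same nonvanishing relative \v{C}ech group, using that \v{C}ech cohomology of a compact pair vanishes above the covering dimension. One duality computation therefore yields both conclusions.
\item You instead use the Hurewicz--Wallman theorem that a connected $n$-manifold cannot be separated by a set of dimension $\le n-2$. You first transfer the separation to $\mathrm{int}\,\Lambda$ by pushing the separated points into the interior, which is legitimate because components of $\Lambda\setminus\mathcal{B}$ are open.
\end{itemize}
Your route is more elementary for this half. It needs neither duality for manifolds with boundary nor the cohomological characterisation of covering dimension, and it works verbatim when $\mathcal{B}$ meets $\partial\Lambda$. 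The cost is that non-contractibility then requires a separate argument.

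\textbf{One imprecision.} You say $\lambda_0,\lambda_1$ lie in different components of $\Lambda\setminus\mathcal{B}$ ``because $\mathcal{L}_D$ changes sign.'' That is not the reason. A sign change of $\mathcal{L}_D$ by itself only shows that $\Sigma$ separates them. For $\mathcal{B}$ one needs the parity argument in the proof of Theorem~\ref{main-bif}: every path joining them has parity $1$ and so meets $\mathcal{B}$. You also need that path-components and components of the open set $\Lambda\setminus\mathcal{B}$ coincide. Since you only use the conclusion of the theorem, this does not affect your proof.
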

\begin{proof}
As $\Lambda$ is connected by assumption, we obtain from the long exact sequence in homology
\[\ldots\rightarrow H_1(\Lambda,\Lambda\setminus\mathcal{B};\mathbb{Z}_2)\rightarrow \tilde{H}_0(\Lambda\setminus\mathcal{B};\mathbb{Z}_2)\rightarrow\tilde{H}_0(\Lambda;\mathbb{Z}_2)=0\]
that $H_1(\Lambda,\Lambda\setminus\mathcal{B};\mathbb{Z}_2)\rightarrow \tilde{H}_0(\Lambda\setminus\mathcal{B};\mathbb{Z}_2)$ is surjective. Now $\Lambda\setminus\mathcal{B}$ is not connected by Theorem \ref{main-bif} and thus the reduced homology group $\tilde{H}_0(\Lambda\setminus\mathcal{B};\mathbb{Z}_2)$ is non-trivial. Consequently, $H_1(\Lambda,\Lambda\setminus\mathcal{B};\mathbb{Z}_2)$ is non-trivial as well. Finally, Poincar\'{e}-Lefschetz duality yields an isomorphism 
\[H_1(\Lambda,\Lambda\setminus\mathcal{B};\mathbb{Z}_2)\cong \check{H}^{n-1}(\mathcal{B},\mathcal{B}\cap\partial\Lambda;\mathbb{Z}_2),\]
where $\check{H}$ stands for \v{C}ech-cohomology and we use that $\mathcal{B}$ is compact. This shows the assertion on the dimension, as a non-trivial \v{C}ech-cohomology in degree $n-1$ gives $n-1$ as lower bound on the dimension of the total space $\mathcal{B}$. If in addition $\Sigma\cap\partial\Lambda=\emptyset$, then $\mathcal{B}\cap\partial\Lambda=\emptyset$ as $\mathcal{B}\subset\Sigma$. Hence $\check{H}^{n-1}(\mathcal{B};\mathbb{Z}_2)=\check{H}^{n-1}(\mathcal{B},\mathcal{B}\cap\partial\Lambda;\mathbb{Z}_2)$ which shows that $\mathcal{B}$ is not contractible as $n\geq 2$.  
\end{proof}
The case $\Lambda=[0,1]$ and $\Lambda_0=\{0,1\}$ is worth to be noted separately as it is the main outcome of Hu and Portaluri's work \cite{HuP} under the stronger assumptions $(A4)$ and $(A5)$.
\begin{corollary}\label{cor-bifurcation-III}
Assume that $(A1)$ and $(A2)$ hold for the equations \eqref{Nonlinear} where $\Lambda=[0,1]$ and $\Lambda_0=\{0,1\}$. Then there is a bifurcation point $\lambda^\ast\in(0,1)$ of \eqref{Nonlinear} if 
\[\mathcal{L}_D(0)\cdot\mathcal{L}_D(1)<0.\]
\end{corollary}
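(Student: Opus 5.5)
Corollary \ref{cor-bifurcation-III} will be deduced from Theorem \ref{main-bif} applied to $\Lambda=[0,1]$, $\Lambda_0=\{0,1\}$, $\lambda_0=0$, $\lambda_1=1$. First I check the hypotheses. The interval $[0,1]$ is compact, connected, contractible and in particular simply connected. Assumptions $(A1)$ and $(A2)$ are given. Assumption $(A3)$ holds automatically, because every vector bundle over the contractible space $[0,1]$ is trivial. Hence $E^u(0)$ and $E^s(0)$ admit global frames $s_1,\ldots,s_k$ and $s_{k+1},\ldots,s_d$. Here $1\le k\le d-1$ can be assumed, or else handled trivially: if $k\in\{0,d\}$, then $\mathcal{L}_D$ is the determinant of a frame of $\R^d$ and never vanishes on the connected interval. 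That contradicts $\mathcal{L}_D(0)\mathcal{L}_D(1)<0$, so this case does not occur. The function $\mathcal{L}_D$ is defined with respect to these frames. The sign of the product $\mathcal{L}_D(0)\mathcal{L}_D(1)$ does not depend on the chosen frames, since changing frames multiplies $\mathcal{L}_D$ by a nowhere vanishing continuous function on $[0,1]$.

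Theorem \ref{main-bif} then says that $\mathcal{B}$ disconnects $[0,1]$, so $[0,1]\setminus\mathcal{B}$ is not connected. In particular $\mathcal{B}\neq\emptyset$, since $[0,1]$ itself is connected. It remains to locate a bifurcation point in the open interval $(0,1)$. Every bifurcation point lies in $\Sigma$, because at parameters where $L_\lambda$ is invertible the implicit function theorem excludes bifurcation. By $(A2)$ we have $\Lambda_0\cap\Sigma=\emptyset$, so $0,1\notin\mathcal{B}$. Therefore any $\lambda^\ast\in\mathcal{B}$ lies in $(0,1)$, which is the claim.

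As a more elementary alternative, one could bypass the disconnection statement. By Corollary \ref{cor-main-contractible}, $\ind(L)\neq0$ in $\KO([0,1],\{0,1\})\cong\Z_2$. Through the identification with Hu–Portaluri's parity in Section \ref{section-singlepar}, this means the parity $\sigma(L,[0,1])$ is nontrivial. The Fitzpatrick–Pejsachowicz bifurcation theorem quoted in the introduction then gives bifurcation at some $\lambda^\ast\in(0,1)$, applied to $G(\lambda,u)=\dot u-g(\lambda,\cdot,u)$ as a $C^1$ map $H^1\to L^2$ with $D_uG(\lambda,0)=L_\lambda$.

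The only step needing care is the $C^1$ regularity of $G$ on $H^1(\R,\R^d)$ under the stated boundedness assumptions on $g$ and $D_ug$. Relying on Theorem \ref{main-bif} avoids redoing this, so that will be the primary route.
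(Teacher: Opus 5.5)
Your primary route is correct and matches the paper's intent: the corollary is the specialisation of Theorem~\ref{main-bif} to $\Lambda=[0,1]$, $\Lambda_0=\{0,1\}$, with $(A3)$ automatic by contractibility, and $\mathcal{B}\subset\Sigma$, $\Lambda_0\cap\Sigma=\emptyset$ placing $\lambda^\ast$ in $(0,1)$ (the proof of Theorem~\ref{main-bif} with $\gamma=\id$ even gives this directly). In your alternative route, Section~\ref{section-singlepar} only identifies $\psi_{0,1}(\ind(L))$ with Hu--Portaluri's orientability invariant, and the paper explicitly warns that this invariant is not shown to equal the Fitzpatrick--Pejsachowicz parity $\sigma(L,[0,1])$; the correct bridge is Theorem~\ref{thm-parityindbundle} combined with Proposition~\ref{beta-iso}, as in the proof of Theorem~\ref{main-bif}.
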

Let us emphasize once again that $(A3)$ holds on every contractible space $\Lambda$ and thus it is not an assumption in the previous corollary.

\subsection{Proof of Theorem \ref{main-bif}}
We consider the nonlinear operator family
\[
G:\Lambda\times H^1(\mathbb{R},\mathbb{R}^d)\rightarrow L^2(\mathbb{R},\mathbb{R}^d),\quad G(\lambda,u)=\dot{u}(t)-g(\lambda,t,u(t))
\]
and the corresponding linearisations at the trivial branch of solutions
\[
L:=D_0G:\Lambda\times H^1(\mathbb{R},\mathbb{R}^d)\rightarrow L^2(\mathbb{R},\mathbb{R}^d), \quad L_\lambda u=\dot{u}(t)-A_\lambda(t)u(t),\]
which are of the form \eqref{L} as in our main Theorem \ref{thm-main-lin}. Consequently, the operators $L_\lambda$, $\lambda\in\Lambda$, are Fredholm of index $0$ by Theorem \ref{thm-FredholmIndex}.\\
Let us now briefly recall the definition of the parity for paths in the space $\Phi_0(X,Y)$ of Fredholm operators of index $0$ between Banach spaces $X,Y$, where we mostly follow \cite{Memoirs}. Let $L:[a,b]\rightarrow\Phi_0(X,Y)$ be a path such that $L_a$ and $L_b$ are invertible. Fitzpatrick and Pejsachowicz showed in \cite{FiPejsachowiczI} that there is a parametrix for $L$, i.e., a path $M:[a,b]\rightarrow\GL(Y,X)$ such that $M_\lambda L_\lambda=I_X+K_\lambda$ for some path $K:[a,b]\rightarrow\mathcal{K}(X)$ of compact operators. As $I_X+K_\lambda$ are invertible for $\lambda=a,b$, the Leray-Schauder degree of these operators is defined and given by
\begin{align}\label{LSdeg}
\deg_{LS}(I_X+K_\lambda)=(-1)^{k(\lambda)},
\end{align}
where $k(\lambda)$ denotes the algebraic multiplicity of the number of eigenvalues less than $-1$ of $K_\lambda$. The parity $\sigma(L,[a,b])$ is defined as the unique element in $\mathbb{Z}_2=\{0,1\}$ such that
\[\deg_{LS}(I_E+K_a)\deg_{LS}(I_E+K_b)=(-1)^{\sigma(L,[a,b])}.\]
It was shown in \cite{FiPejsachowiczI}, \cite{Memoirs} that this definition indeed does not depend on the choice of the parametrix $M$. Moreover, the parity has the following properties:
\begin{itemize}
 \item[(i)] If $L_\lambda\in\GL(X,Y)$ for all $\lambda\in[a,b]$, then $\sigma(L,[a,b])=0$.
 \item[(ii)] If $h:[a,b]\times[0,1]\rightarrow\Phi_0(X,Y)$ is a homotopy such that $h(a,s), h(b,s)\in\GL(X,Y)$ for all $s\in[0,1]$, then
 \[\sigma(h(\cdot,0),[a,b])=\sigma(h(\cdot,1),[a,b]).\]
 \item[(iii)] If $E=E_1\oplus E_2$ for two closed subspaces of $E$ such that $L_\lambda(E_i)\subset E_i$ for all $\lambda\in[a,b]$, $i=1,2$, then
 \[\sigma(L,[a,b])=\sigma(L\mid_{E_1},[a,b])+\sigma(L\mid_{E_2},[a,b]).\]
 \end{itemize}
The main motivation for introducing the parity is bifurcation theory of nonlinear operator equations as in the following theorem that can be found in \cite{FiPejsachowiczProc}.
\begin{theorem}\label{Rabier}
Let $X,Y$ be real Banach spaces and $G:[a,b]\times X\rightarrow Y$ a $C^1$-map with $G(\lambda,0)=0$ for all $\lambda\in[a,b]$. Suppose that the derivatives $D_uG_\lambda$ of $G_\lambda:X\rightarrow Y$ at $0\in X$ are Fredholm of index $0$, and that $D_uG_{\lambda}\in\GL(X,Y)$ for $\lambda=a,b$ as well as $\sigma(D_uG_\cdot,[a,b])=1$. Then there is a bifurcation point from the trivial branch, i.e., there is some $\lambda^\ast\in(a,b)$ such that in every neighbourhood of $(\lambda^\ast,0)\in [a,b]\times X$, there is some $(\lambda,u)$ such that $G(\lambda,u)=0$ and $u\neq 0$.
\end{theorem}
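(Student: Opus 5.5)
We argue by contradiction, reducing to a finite-dimensional family by a Lyapunov--Schmidt reduction that is uniform in $\lambda$ and then using Brouwer degree. Suppose there is no bifurcation point in $(a,b)$. Since $D_uG_a$ and $D_uG_b$ are invertible, the implicit function theorem shows that $u=0$ is locally the only zero near $\lambda=a$ and $\lambda=b$. Compactness of $[a,b]$ then gives some $\delta>0$ such that $G(\lambda,u)=0$ and $\|u\|\le\delta$ imply $u=0$ for all $\lambda\in[a,b]$.

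Set $L_\lambda:=D_uG_\lambda$. As in the construction of the index bundle (cf.\ \cite[Lemma 2.1]{Wat15}), choose a finite dimensional $V\subset Y$ with $R(L_\lambda)+V=Y$ for all $\lambda$, and a projection $P$ onto $V$. Then $(I_Y-P)L_\lambda$ is surjective, and $E_\lambda:=L_\lambda^{-1}(V)$ forms a vector bundle $E(L,V)$ of dimension $n=\dim V$. Since $[a,b]$ is contractible, this bundle is trivial, and we fix a continuous family of complements $X=E_\lambda\oplus F_\lambda$. Applying the implicit function theorem to $(I_Y-P)G(\lambda,e+f)=0$ in $f\in F_\lambda$, uniformly in $\lambda$ by compactness, gives a continuous $f=\phi(\lambda,e)$ with $\phi(\lambda,0)=0$ for small $e$. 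The zeros of $G$ near $0$ then correspond exactly to the zeros of the reduced map
\[
g_\lambda:E_\lambda\cap B_r\rightarrow V,\qquad g_\lambda(e)=PG(\lambda,e+\phi(\lambda,e)),
\]
and $D_eg_\lambda(0)=L_\lambda|_{E_\lambda}:E_\lambda\to V$.

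Trivialise $E(L,V)\cong[a,b]\times\R^n$ and identify $V\cong\R^n$. By the first paragraph, $0$ is the only zero of $g_\lambda$ in a small ball $B_\rho$, for every $\lambda$. Homotopy invariance of Brouwer degree then gives $\deg(g_a,B_\rho,0)=\deg(g_b,B_\rho,0)$. At the endpoints $0$ is a nondegenerate zero, so $\deg(g_\lambda,B_\rho,0)=\sgn\det(L_\lambda|_{E_\lambda})$ in the trivialisation for $\lambda=a,b$. Hence $\sgn\det(L_a|_{E_a})\cdot\sgn\det(L_b|_{E_b})=1$.

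It remains to identify this sign product with $(-1)^{\sigma(L,[a,b])}$, which is the main obstacle. I would proceed as in Step II of the proof of Theorem \ref{thm-main-lin}. Write $L_\lambda$ in block form with respect to $X=E_\lambda\oplus F_\lambda$ and $Y=V\oplus R(I_Y-P)$. The lower-left block vanishes, since $(I_Y-P)L_\lambda e=0$ for $e\in E_\lambda$, and the lower-right block is invertible. Deforming the upper-right block to $0$ is then a homotopy through Fredholm operators that stays invertible at $a$ and $b$, so property (ii) of the parity applies.

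By property (iii) and the conjugation invariance of the parity, the parity splits as the parity of the finite-dimensional path $L|_E:[a,b]\to\GL$-at-endpoints maps $\R^n\to\R^n$ plus the parity of an everywhere-invertible path, which is $0$ by (i). For finite-dimensional paths one can take the constant parametrix $M=I$, which gives $\deg_{LS}=\sgn\det$. So $(-1)^{\sigma}=\sgn\det(L_a|_{E_a})\sgn\det(L_b|_{E_b})=1$, contradicting $\sigma=1$.

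The delicate points are the varying subspaces $E_\lambda,F_\lambda$. They must be straightened to fixed ones by a continuous family of isomorphisms so that (iii) applies literally; this is possible because the bundles over $[a,b]$ are trivial, and the parity is invariant under composition with paths in $\GL$.
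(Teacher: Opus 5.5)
The paper does not prove this statement. It quotes it from \cite{FiPejsachowiczProc} and uses it as a black box, so your proposal supplies an argument the paper does not contain.

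Your argument is correct and follows the standard route for this result:
\begin{enumerate}
\item a uniform non-bifurcation radius from compactness together with the implicit function theorem at the invertible endpoints;
\item a parametrised Lyapunov--Schmidt reduction onto $L_\lambda^{-1}(V)$;
\item homotopy invariance of the Brouwer degree;
\item the identification of $(-1)^{\sigma(L,[a,b])}$ with $\sgn\det(L_a\psi_a)\sgn\det(L_b\psi_b)$.
\end{enumerate}

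The last step, which you call the main obstacle, is exactly Theorem \ref{thm-parityindbundle}, quoted from \cite{Memoirs}. The paper itself invokes it in the proof of Theorem \ref{main-bif}. Citing it would end your proof immediately after the degree computation.

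Deriving it from (i)--(iii) instead makes the argument self-contained, but you should state a few routine checks:
\begin{itemize}
\item Property (iii) is needed for operators $X\to Y$ with splittings $X=X_1\oplus X_2$, $Y=Y_1\oplus Y_2$. This follows by using a block-diagonal parametrix.
\item Invariance under right composition with a path $T_\lambda\in\GL(X)$ holds because $T^{-1}M$ is a parametrix for $LT$. Its compact part is $T^{-1}KT$, which has the same eigenvalues and multiplicities as $K$.
\item The straightening $T_\lambda$ must carry $E_a$ onto $E_\lambda$ \emph{and} $F_a$ onto $F_\lambda$. Triviality of $E(L,V)$ alone does not give this. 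Let $\Pi_\lambda$ be the projection onto $E_\lambda$ along $F_\lambda$; then chaining the operators $\Pi_\lambda\Pi_\mu+(I-\Pi_\lambda)(I-\Pi_\mu)$ over a fine subdivision of $[a,b]$ does.
\item ``$M=I$'' should be read as a fixed isomorphism $V\to E_a$.
\item The sign product does not depend on the trivialisation, since two trivialisations differ by a path in $\GL(n,\R)$. This is what ensures your degree computation and your parity computation produce the same number.
\end{itemize}
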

Fitzpatrick and Pejsachowicz pointed out in \cite{FiPejsachowiczII} that the index bundle \eqref{indPejsachowicz} for $\Lambda=S^1$ and closed paths $L:S^1\rightarrow\Phi_0(X,Y)$ can be identified with the parity under the identification $\widetilde{\KO}(S^1)\cong\mathbb{Z}_2$. This fact has been applied various times to study bifurcation problems of ordinary and partial differential equations, e.g., in \cite{Jacobo, JacoboTMNAI}. Let us finally note that there is a subtle gap in Hu and Portaluri's work \cite{HuP} as the authors call their invariant $\sigma(L_\lambda,\lambda\in [a,b])$ the parity, but nowhere explain that it actually coincides with the number $\sigma(L,[a,b])$ used in Theorem \ref{Rabier}.\\
Fitzpatrick and Pejsachowicz proved in \cite{Memoirs} the following theorem.
\begin{theorem}\label{thm-parityindbundle}
Let $L:([a,b],\{a,b\})\rightarrow(\Phi_0(X,Y),\GL(X,Y))$ be a path and $V\subset Y$ a finite dimensional subspace that is transversal to the range of $L$ as in \eqref{subspace}. If $\psi:[a,b]\times\mathbb{R}^d\rightarrow E(L,V)$ is any trivialisation of the bundle $E(L,V)$ in \eqref{E-V-L} over $[a,b]$, then
\begin{align*}
\sigma(L,[a,b])=\sgn\det(L_a\psi_a)\sgn\det(L_b\psi_b)\in\mathbb{Z}_2.
\end{align*}
\end{theorem}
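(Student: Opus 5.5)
The plan is to reduce $L$, by homotopies and compositions with invertible paths, to a block-diagonal path. Its finite dimensional block is the matrix path $\lambda\mapsto L_\lambda\psi_\lambda$, read in a fixed basis of $V$, and its infinite dimensional block is invertible. Throughout I fix a basis of $V$, so that $\det(L_\lambda\psi_\lambda)$ makes sense. Changing this basis multiplies both determinants by the same nonzero factor, so the product of signs does not depend on it.

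\emph{Step 1: splittings.} Choose a projection $P$ onto $V$ and put $Y'=R(I_Y-P)$, so $Y=V\oplus Y'$. By \eqref{ker-I-P}, $(I_Y-P)L_\lambda$ is surjective with kernel $E(L,V)_\lambda$. Since $E(L,V)$ is a vector bundle, there is a continuous family of projections $\Pi_\lambda$ on $X$ with $R(\Pi_\lambda)=E(L,V)_\lambda$, constructed from $\psi$ and a continuous family of functionals as in \cite[Prop.~14.2.3]{Dieck}. Set $F_\lambda=N(\Pi_\lambda)$. In the splittings $X=E(L,V)_\lambda\oplus F_\lambda$ and $Y=V\oplus Y'$ the operator becomes
\[L_\lambda=\begin{bmatrix}L^{11}_\lambda & L^{12}_\lambda\\ 0 & L^{22}_\lambda\end{bmatrix},\]
with $L^{22}_\lambda=(I_Y-P)L_\lambda|_{F_\lambda}:F_\lambda\to Y'$ invertible for every $\lambda$. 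As in Step II of the proof of Theorem \ref{thm-main-lin}, multiplying $L^{12}$ by $(1-s)$ gives a homotopy of Fredholm operators that stays invertible at $\lambda=a,b$, because a block-triangular operator with invertible $L^{22}$ is invertible exactly when $L^{11}$ is. By property $(ii)$ of the parity we may therefore assume $L^{12}\equiv 0$.

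\emph{Step 2: straightening the varying decomposition.} This is the step I expect to be the main obstacle. Since $[a,b]$ is contractible, I would use Kato's transformation function for the path of projections $\Pi_\lambda$, solving a linear ODE in $\mathcal{L}(X)$ after a finite subdivision of $[a,b]$. This gives a continuous path $U_\lambda\in\GL(X)$ with $U_a=I_X$, $U_\lambda(E(L,V)_a)=E(L,V)_\lambda$ and $U_\lambda(F_a)=F_\lambda$. Composing $U_\lambda$ on $E(L,V)_a$ with $\psi_\lambda\psi_a^{-1}U_\lambda^{-1}$ still gives an invertible path, so we may assume in addition $U_\lambda\psi_a=\psi_\lambda$.

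One also needs $\sigma(LU,[a,b])=\sigma(L,[a,b])$ for such an invertible path $U$. This follows directly from the definition: if $M$ is a parametrix for $L$, then $U^{-1}M$ is a parametrix for $LU$, since
\[U_\lambda^{-1}M_\lambda L_\lambda U_\lambda=I_X+U_\lambda^{-1}K_\lambda U_\lambda,\]
and the Leray--Schauder degree is invariant under conjugation.

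\emph{Step 3: computing the parity.} After Steps 1 and 2, $L_\lambda U_\lambda$ preserves the fixed splittings $E(L,V)_a\oplus F_a$ and $V\oplus Y'$, and is diagonal there. By property $(iii)$ (applied in its evident form for operators between two spaces with compatible splittings),
\[\sigma(LU,[a,b])=\sigma(L^{11}U,[a,b])+\sigma(L^{22}U,[a,b]).\]
The second summand vanishes by property $(i)$, because $L^{22}_\lambda U_\lambda$ is invertible for every $\lambda$. The first is the parity of the matrix path $M_\lambda:=L_\lambda\psi_\lambda$ between $d$-dimensional spaces, after identifying $E(L,V)_a\cong\mathbb{R}^d$ via $\psi_a$. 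For this path, take the constant parametrix $M_a^{-1}$. Then $I+K_\lambda=M_a^{-1}M_\lambda$, and the Leray--Schauder degree of an invertible operator on a finite dimensional space is the sign of its determinant. Hence
\[(-1)^{\sigma}=\sgn\det(M_a^{-1}M_a)\cdot\sgn\det(M_a^{-1}M_b)=\sgn\det(L_a\psi_a)\,\sgn\det(L_b\psi_b),\]
which is the claimed formula.
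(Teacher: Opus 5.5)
The paper does not prove this theorem; it quotes it from Fitzpatrick and Pejsachowicz \cite{Memoirs}. Your proposal is therefore an independent derivation from the parametrix definition of $\sigma$ and properties $(i)$--$(iii)$. It is correct, up to one inaccurate step.

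The following parts check out:
\begin{itemize}
\item In Step 1, a family $\ell_\lambda$ with $\ell_\lambda\psi_\lambda=I_d$ (so that $\Pi_\lambda=\psi_\lambda\ell_\lambda$) exists locally by Hahn--Banach and globally by a partition of unity, because the constraint is affine.
\item The homotopy $L_\lambda-s\,PL_\lambda(I_X-\Pi_\lambda)$ is a finite-rank perturbation, and it leaves both $E(L,V)$ and $L_\lambda\psi_\lambda$ unchanged.
\item The two-space version of $(iii)$ follows by taking direct sums of parametrices.
\item Your conjugation argument for $\sigma(LU,[a,b])=\sigma(L,[a,b])$ and the finite-dimensional computation in Step 3 are right.
\end{itemize}

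The inaccurate point is the mechanism in Step 2. Kato's transformation function solves $U'=[\Pi',\Pi]U$, which needs a $C^1$ path of projections. Here $\Pi_\lambda$ is only continuous, since $L_\lambda^{-1}(V)$ moves only continuously with $L_\lambda$. Use the intertwiners $W^\mu_\lambda=\Pi_\lambda\Pi_\mu+(I_X-\Pi_\lambda)(I_X-\Pi_\mu)$ instead. They satisfy $\|W^\mu_\lambda-I_X\|\leq\|\Pi_\lambda-\Pi_\mu\|\,(\|\Pi_\mu\|+\|I_X-\Pi_\mu\|)$, so they are invertible for $\lambda$ near $\mu$. They map $R(\Pi_\mu)$ and $N(\Pi_\mu)$ onto $R(\Pi_\lambda)$ and $N(\Pi_\lambda)$, and $W^\mu_\mu=I_X$. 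Concatenating them over a fine subdivision $a=t_0<\dots<t_n=b$, with $U_\lambda=W^{t_k}_\lambda U_{t_k}$ on $[t_k,t_{k+1}]$, gives the path $U$ you need.

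You can also bypass Steps 1--3 with a parametrix adapted to the splitting. Let $J:V\to\mathbb{R}^d$ be your basis and $S_\lambda=(I_Y-P)L_\lambda|_{F_\lambda}:F_\lambda\to Y'$. Then $M_\lambda=\psi_\lambda JP+S_\lambda^{-1}(I_Y-P)$ lies in $\GL(Y,X)$. It is continuous in $\lambda$, since $S_\lambda^{-1}y=T_\lambda^{-1}(y,0)$ for the invertible map $T_\lambda x=((I_Y-P)L_\lambda x,\ell_\lambda x)$. Moreover, $M_\lambda L_\lambda-I_X=\psi_\lambda JPL_\lambda-\Pi_\lambda$ has finite rank with range in $E(L,V)_\lambda$. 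Its eigenvalues below $-1$, with multiplicities, are those of its restriction to $E(L,V)_\lambda$. There $M_\lambda L_\lambda$ acts by the matrix $JL_\lambda\psi_\lambda$ in the frame $\psi_\lambda$. Hence $\deg_{LS}(M_\lambda L_\lambda)=\sgn\det(JL_\lambda\psi_\lambda)$ at $\lambda=a,b$, and the formula follows immediately from the definition of $\sigma$.

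This shortcut avoids the homotopy removing $L^{12}$, the straightening path and property $(iii)$. Your route, in exchange, presents the reduction as a chain of parity-preserving moves that parallels Step II of the proof of Theorem \ref{thm-main-lin}.
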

Now let $\lambda_0,\lambda$ be as in the statement of Theorem \ref{main-bif} and let $\gamma:[0,1]\rightarrow\Lambda$ be a path such that $\gamma(0)=\lambda_0$ and $\gamma(1)=\lambda_1$. If we consider the path $L_\gamma=\{L_{\gamma(s)}\}_{s\in[0,1]}$ of Fredholm operators of index $0$, then $\sigma(L_\gamma,[0,1])\in\mathbb{Z}_2$ is defined. Moreover, it follows from Proposition \ref{beta-iso} for $\Lambda=[0,1]$, $\Lambda_0=\{0,1\}$ and Theorem \ref{thm-parityindbundle} that $\sigma(L_\gamma,[0,1])=\psi_{0,1}(\ind(L_\gamma))\in\mathbb{Z}_2$. Now $\ind(L_\gamma)$ is non-trivial if and only if the map $\mathcal{L}^\gamma_D:[0,1]\rightarrow\R$ in Corollary \ref{cor-main-contractible} satisfies $\mathcal{L}^\gamma_D(0)\cdot\mathcal{L}^\gamma_D(1)<0$. Finally, if we consider the map $\mathcal{L}_D:\Lambda\rightarrow\R$ on all of $\Lambda$, then $\mathcal{L}_D(\gamma(t))=\mathcal{L}^\gamma_D(t)$ for all $t\in[0,1]$, and thus by assumption
\[\mathcal{L}^\gamma_D(0)\cdot\mathcal{L}^\gamma_D(1)=\mathcal{L}_D(\lambda_0)\cdot\mathcal{L}_D(\lambda_1)<0.\]    
In summary, $\sigma(L_\gamma,[0,1])=1$ and we obtain from Theorem \ref{Rabier} that there is some $t'\in(0,1)$ such that $\gamma(t')\in\mathcal{B}$, which thus is not the empty set.\\
To show that $\mathcal{B}$ disconnects $\Lambda$, we follow \cite{PortW14} and assume that there is a path $\tilde{\gamma}$ in $\Lambda$ such that $\tilde{\gamma}(0)=\lambda_1$, $\tilde{\gamma}(1)=\lambda_0$ and $\sigma(L_{\tilde{\gamma}},[0,1])=0\in\mathbb{Z}_2$. Then the concatenation of $L_\gamma$ and $L_{\tilde{\gamma}}$ is a closed path and thus homotopic to a constant path as $\Lambda$ is simply connected by assumption. The properties $(i)$ and $(ii)$ of the parity from above imply that the parity of this closed path vanishes. On the other hand, property $(iii)$ shows that the parity of the latter path is non-trivial, which is a contradiction. Thus $\sigma(L_\gamma,[0,1])=1$ for any path $\gamma$ in $\Lambda$ that joins $\lambda_0$ and $\lambda_1$, which implies by Theorem \ref{Rabier} that any such path intersects $\mathcal{B}$. As $\lambda_0$, $\lambda_1\notin\Sigma$ and $(\Lambda\setminus\Sigma)\cap\mathcal{B}=\emptyset$, it follows that $\mathcal{B}$ indeed disconnects $\Lambda$.


\subsection{Examples}
In this section we aim to illustrate Theorem \ref{main-bif}, where firstly we continue the examples that we already considered in Section \ref{section-examples}. Let us begin by assuming that $\Lambda=S^1$, $\Lambda_0=\{-1\}$ and the linearisations of \eqref{Nonlinear} are of the form \eqref{lin-HamiltonII} for the matrices \eqref{ex-BC}. It follows from our findings in Section \ref{section-examples} that \eqref{lin-HamiltonII} has a non-trivial solution only for $\lambda=1\in S^1$. As $\mathcal{B}\subset\Sigma$, this is the only possible bifurcation point of \eqref{Nonlinear}. We leave it to the reader to check by Corollary \ref{cor-bifurcation-III} that actually $\mathcal{B}=\{1\}$. Thus even though $\mathcal{B}$ is non-empty, it does not disconnect $\Lambda=S^1$. Note that this does not contradict Theorem \ref{main-bif} as neither $\Lambda=S^1$ is simply connected nor do maps $s_1,s_2:S^1\rightarrow\mathbb{R}^2$ as in $(A3)$ exist.\\
Let us now consider an example where the assumptions of Theorem \ref{main-bif} hold. We let $\Lambda$ be any compact and simply connected space and consider a continuous map $\theta:\Lambda\rightarrow\mathbb{R}$. We assume that the linearisations of \eqref{Nonlinear} are of the form \eqref{lin-HamiltonII}, where for $\lambda\in\Lambda$
\begin{align*}
B_{\lambda}=\begin{pmatrix}
\cos(\theta(\lambda))&-\sin(\theta(\lambda))\\
-\sin(\theta(\lambda))&-\cos(\theta(\lambda))
\end{pmatrix}\quad\text{and}\quad C_{\lambda}=\begin{pmatrix}
\cos(\theta(\lambda))&\sin(\theta(\lambda))\\
\sin(\theta(\lambda))&-\cos(\theta(\lambda))
\end{pmatrix}.
\end{align*}
Note that $P^-_{\lambda}=P^+_{\lambda}$ if $\theta(\lambda)=k\pi$ for some $k\in\mathbb{Z}$, which in particular shows that $(A2)$ holds for any subset $\Lambda_0$ of $\Lambda$ which is contained in $\theta^{-1}(\{k\pi:\, k\in\mathbb{Z}\})$. Moreover, $\mu_1(B_\lambda)=\mu_1(C_\lambda)=1$, $\mu_2(B_\lambda)=\mu_2(C_\lambda)=-1$, and if we define continuous maps $s_1, s_2:\Lambda\rightarrow\mathbb{R}^2$ by 
\begin{align*}
s_1(\lambda)=\begin{pmatrix}
\sin\frac{\theta(\lambda)}{2}\\
\cos\frac{\theta(\lambda)}{2}
\end{pmatrix},\quad s_2(\lambda)=\begin{pmatrix}
\cos\frac{\theta(\lambda)}{2}\\\sin\frac{\theta(\lambda)}{2}
\end{pmatrix},
\end{align*}
then $E^u(0)_\lambda=\spann\{s_1(\lambda)\}$ and $E^s(0)_\lambda=\spann\{s_2(\lambda)\}$ for all $\lambda\in\Lambda$, which shows $(A3)$, and 
\[\mathcal{L}_D(\lambda)=\det(s_1(\lambda),s_2(\lambda))=\sin^2\frac{\theta(\lambda)}{2}-\cos^2\frac{\theta(\lambda)}{2}=-\cos(\theta(\lambda)).\]
Now it is easy to obtain examples of multiparameter bifurcation for \eqref{Nonlinear} by considering level sets of $\theta:\Lambda\rightarrow\mathbb{R}$. For example, let us consider the closed unit disc $\Lambda=D^2$ and let $\theta:D^2\rightarrow\mathbb{R}$ be such that $\theta|_{S^1}\equiv 0$, $\theta(D^2)\subset[0,\pi]$ and $\theta(0,0)=\pi$. Then we obtain from Corollary \ref{cor-bifurcation-topology} for $\Lambda_0=\{(0,0),(0,1)\}$ that $\mathcal{B}$ disconnects $D^2$, is not contractible as a topological space and of covering dimension at least $1$. Note that if $\theta$ is differentiable and $\frac{\pi}{2}$ is a regular value, then it follows from common Morse theory that $\mathcal{B}$ actually is a submanifold of $D^2$ of dimension $1$. In general, it is an interesting observation that any perturbation as in \eqref{perturbed} does not affect the index bundle as long as it satisfies \eqref{Qbound}. Thus by Theorem \ref{thm-main-lin} also the topological implications of Corollary \ref{cor-bifurcation-topology} still hold and thus the bifurcation set of the perturbed problem still disconnects $D^2$, is not contractible and of covering dimension at least $1$.\\
Another instructive setting appears if we consider maps $\theta:D^2\rightarrow\mathbb{R}$ of the form $\theta(x,y)=\theta_1(x)\theta_2(y)$. For example, if we assume that $\theta_1:[-1,1]\rightarrow[0,\pi]$ is a homeomorphism and $\theta_2(0)=1$, then Corollary \ref{cor-bifurcation-topology} for $\Lambda_0=\{(-1,0), (1,0)\}$ implies that $\mathcal{B}$ disconnects $D^2$, is of covering dimension $1$ and intersects $S^1$ in at least two distinct points. Moreover, there is at least one intersection of $\mathcal{B}$ and $S^1$ in each open semi-circle $S^1_+=\{(x,y)\in S^1:\, y>0\}$ and $S^1_-=\{(x,y)\in S^1:\, y<0\}$. The latter follows as
\[\mathcal{L}_D(-1,0)\cdot\mathcal{L}_D(1,0)<0\]      
and thus $\mathcal{L}_D|_{S^1}:S^1\rightarrow\mathbb{R}$ changes sign along the semi-cicles which implies an intersection with $\mathcal{B}$ by Corollary \ref{cor-bifurcation-III}. Let us once again note that the bifurcation set of any perturbed family \eqref{perturbed} that satisfies \eqref{Qbound} has the same topological properties.\\
The main aim of our upcoming work \cite{MultBif} is to construct a bifurcation invariant for \eqref{Nonlinear} in settings as $\Lambda=D^2$ and $\Lambda_0=S^1$, which consequently is not applicable to both examples on $D^2$ above. In the latter one, this is obviously the case as $\mathcal{B}\cap S^1=\emptyset$ if $\Lambda_0=S^1$, whereas the first example allows an easy deformation to a constant system for which the index bundle vanishes in $KO(D^2,S^1)$. The novelty of the new invariant is that it yields, e.g., the existence of isolated points of $\mathcal{B}\subset D^2$ which cannot be found by the parity and thus are really caused by a multiparameter effect.\\
Finally, we consider an example that underpins the applicability of our findings. In the perturbed second-order equation
\begin{equation}\label{eq:2nd-order-family-semi}
		u''(t)+p(\lambda,t)\,u'(t)+q(\lambda,t)\,u(t)+f(\lambda,t,u(t))=0,
		\qquad \lambda\in[\lambda_0,\lambda_1],
\end{equation}
we assume that $f:[\lambda_0,\lambda_1]\times\mathbb{R}\times\mathbb{R}\to\mathbb{R}$ is continuous in $(\lambda,t,u)$ and continuously differentiable in $u$. Moreover,
\begin{equation}\label{eq:f-assumptions}
		f(\lambda,t,0)=0,\qquad D_u f(\lambda,t,0)=0 \qquad \text{for every } t\in\mathbb{R}\text{ and }\lambda\in [\lambda_0,\lambda_1]
\end{equation}
and the coefficients $p,q:[\lambda_0,\lambda_1]\times\mathbb{R}\to\mathbb{R}$ are continuous in $(\lambda,t)$. Note that \eqref{eq:f-assumptions} implies that $u\equiv 0$ is a solution of \eqref{eq:2nd-order-family-semi} for every $\lambda$. To study bifurcation from this trivial branch of solutions, we consider the family of linear second--order ordinary differential equations 
\begin{equation}\label{eq:2nd-order-family}
		u''(t)+p(t,\lambda)\,u'(t)+q(t,\lambda)\,u(t)=0,
		\qquad \lambda\in[\lambda_0,\lambda_1],
\end{equation}
on $\mathbb{R}$. To shorten the presentation, we do not consider the most general setting in which we could apply Theorem \ref{thm-main-lin}, but assume that the limits
\begin{equation}\label{eq:limits-pq}
		p(\lambda,t)\to p^\pm(\lambda),\qquad q(\lambda,t)\to q^\pm(\lambda)\qquad \text{as }t\to\pm\infty
\end{equation}
exist unformly in $\lambda$ and that the limiting constant--coefficient equations
\[
u''+p^-(\lambda)u'+q^-(\lambda)u=0,
\qquad
u''+p^+(\lambda)u'+q^+(\lambda)u=0
\]
are hyperbolic, i.e.\ their characteristic polynomials
\begin{equation}\label{eq:char-polys}
		r^2+p^-(\lambda)r+q^-(\lambda)=0,
		\qquad
		r^2+p^+(\lambda)r+q^+(\lambda)=0
\end{equation}
have roots $r_i^{\pm}$ such that  $\RE r_1^-\cdot \RE r_2^-<0$ and $\RE r_1^+\cdot \RE r_2^+<0$.\\
The standard phase variables $x_1(t)=u(t)$ and $x_2(t)=u'(t)$, transform the linear equation \eqref{eq:2nd-order-family} into the equivalent non-autonomous linear system
\begin{equation}\label{eq:first-order-system}
	x'(t)=A_\lambda(t)\,x(t),\qquad x(t)=\binom{x_1(t)}{x_2(t)},
\end{equation}
where
\begin{equation*}
	A_\lambda(t)=
	\begin{pmatrix}
		0 & 1\\
		-q(\lambda,t) & -p(\lambda,t)
	\end{pmatrix}.
\end{equation*}
The limits \eqref{eq:limits-pq} imply that $A_\lambda(t)\to A_\lambda^\pm$ as $t\to\pm\infty$, where
\[
A_\lambda^\pm=
\begin{pmatrix}
	0 & 1\\
	-q^\pm(\lambda) & -p^\pm(\lambda)
\end{pmatrix}
\]
and \eqref{eq:char-polys} guarantees that $A_\lambda^\pm$ are hyperbolic matrices as in Assumption $(A4)$.\\
The semilinear equation \eqref{eq:2nd-order-family-semi} transforms to
\begin{equation}\label{eq:first-order-system-semi}
	x'(t)=A_\lambda(t)\,x(t)+F(\lambda,t,x(t)),
\end{equation}
where
\begin{equation}\label{eq:F-def}
	F(\lambda,t,x)=
	\binom{0}{-f(\lambda,t,x_1)}.
\end{equation}
By \eqref{eq:f-assumptions} we have
\[
F(\lambda,t,0)=0,\qquad D_xF(\lambda,t,0)=0\qquad \text{for all }t\in\mathbb{R} \text{ and }\lambda\in [\lambda_0,\lambda_1],
\]
and so the linearised equation of \eqref{eq:first-order-system-semi} at the trivial solution $x\equiv 0$ is exactly \eqref{eq:first-order-system}. Under the above assumptions, there exist one-dimensional spaces of solutions of the linearized equation \eqref{eq:2nd-order-family} spanned by two functions $u_\pm(\lambda,\cdot):\R \rightarrow\R$ such that $u_-(\lambda,t)\to 0$ as $t\to-\infty$ and $u_+(\lambda,t)\to 0$ as $t\to+\infty$. Equivalently, in the first--order formulation \eqref{eq:first-order-system}, the vectors
\begin{equation*}\label{u-v}
v_-^u(\lambda):=\binom{u_-(\lambda,0)}{u_-'(\lambda,0)},
\qquad
v_+^s(\lambda):=\binom{u_+(\lambda,0)}{u_+'(\lambda,0)}
\end{equation*}
span the unstable and stable subspaces at $0\in\mathbb{R}$.\\
If we now consider
\begin{equation}\label{eq:Evans-parity-D}
	\mathcal{L}_D(\lambda)
	:=\det\left(v_-^u(\lambda),\,v_+^s(\lambda)\right)
	=\det\begin{pmatrix}
		u_-(\lambda,0) & u_+(\lambda,0)\\
		u_-'(\lambda,0) & u_+'(\lambda,0)
	\end{pmatrix}
\end{equation}
as in Corollary \ref{cor-bifurcation-III}, then any isolated zero at which $\mathcal{L}_D$ changes its sign is a bifurcation point of \eqref{eq:2nd-order-family-semi}.\\  
For a numerical example of these findings, let us consider the Schr\"{o}dinger-type equation 
\begin{equation*}
	u''(t) + (-\lambda^2 - V(t))u(t) +f(\lambda,t,u) = 0 \qquad \lambda\in[1/2,3/2],
\end{equation*}
for the P\"{o}schl-Teller potential $V(t) = -2\operatorname{sech}^2(t)$, i.e.
\begin{equation} \label{eq:example}
	u''(t) + (2\operatorname{sech}^2(t)-\lambda^2)u(t) = 0,
\end{equation}
where $f(\lambda,t,u)$ is a nonlinear perturbation as in \eqref{eq:f-assumptions}. Consequently, $p(\lambda,t)= 0$ and $q(\lambda,t)= 2\operatorname{sech}^2(t)-\lambda^2$ in \eqref{eq:2nd-order-family-semi} and these coefficients have limits as $t \to \pm\infty$ given by
\begin{equation*}
	p^\pm(\lambda) = 0, \qquad q^\pm(\lambda) = -\lambda^2.
\end{equation*}
The limiting equations are $u'' -\lambda^2 u = 0$ and they satisfy the hyperbolicity condition that was assumed above. By a direct computation,
\begin{equation*}
	u_-(\lambda,t) = e^{\lambda t}(\lambda - \tanh t),\; u_+(\lambda,t) = e^{-\lambda t}(\lambda + \tanh t)
\end{equation*}
are solutions of \eqref{eq:example} such that $u_-(\lambda,t)$ tends to $0$ as $t\rightarrow-\infty$, and $u_+(\lambda,t)$ tends to $0$ as $t\rightarrow\infty$.  Moreover,
\begin{align*}
u_-'(\lambda,t)=e^{\lambda t}\left(\lambda^2-\lambda\tanh t-\operatorname{sech}^2 t\right),\quad u_+'(\lambda,t)=e^{-\lambda t}\left(-\lambda^2-\lambda\tanh t+\operatorname{sech}^2 t\right), 
\end{align*}
and thus we have in \eqref{eq:Evans-parity-D}
\begin{align*}
	\mathcal{L}_D(\lambda)=\det \begin{pmatrix} \lambda & \lambda \\ \lambda^2-1 & 1-\lambda^2 \end{pmatrix} = 2\lambda(1-\lambda^2).
\end{align*}
As $
\mathcal{L}_D(1/2)\cdot \mathcal{L}_D(3/2)<0$,
we see that \eqref{eq:2nd-order-family-semi} has a bifurcation point.

\thebibliography{99}

\bibitem{AlbertoMorseHilbert} A. Abbondandolo, P. Majer, \textbf{Morse homology on Hilbert spaces}, Comm. Pure Appl. Math.  \textbf{54}, 2001, 689--760

\bibitem{AlbertoODE} A. Abbondandolo, P. Majer, \textbf{Ordinary differential operators in Hilbert spaces and Fredholm pairs}, Math. Z. \textbf{243}, 2003, 525--562

\bibitem{Amann} H. Amann, {\bf Ordinary Differential Equations, An Introduction to Nonlinear Analysis}, De Gruyter Studies in Mathematics {\bf 13}, 1990

\bibitem{KTheoryAtiyah} M.F. Atiyah, \textbf{K-Theory}, Addison-Wesley, 1989

\bibitem{Getzler} N. Berline, E. Getzler, M. Vergne, \textbf{Heat kernels and Dirac operators}, Grundlehren Text Editions, Springer-Verlag, Berlin,  2004

\bibitem{coppel:78} W.~Coppel, \textbf{Dichotomies in Stability Theory}, Lect.\ Notes Math. \textbf{629}, Springer, Berlin etc., 1978.

\bibitem{Dieck} T. tom Dieck, \textbf{Algebraic topology}, EMS Textbooks in Mathematics, Z\"{u}rich, 2008

\bibitem{FiPejsachowiczI} P.M. Fitzpatrick, J. Pejsachowicz, \textbf{The fundamental group of the space of linear Fredholm operators and the global analysis of semilinear equations}, Contemporary Mathematics \textbf{72}, 1988, 47--87

\bibitem{FiPejsachowiczProc} P.M. Fitzpatrick, J. Pejsachowicz, \textbf{A local bifurcation theorem for $C^1$-Fredholm maps}, Proc. Amer. Math. Soc. \textbf{109}, 1990, 995--1002

\bibitem{FiPejsachowiczII} P.M. Fitzpatrick, J. Pejsachowicz, \textbf{Nonorientability of the Index Bundle and Several-Parameter Bifurcation}, J. Funct. Anal. \textbf{98}, 1991, 42--58

\bibitem{Memoirs} P.M. Fitzpatrick, J. Pejsachowicz, \textbf{Orientation and the Leray-Schauder Theory for Fully Nonlinear Elliptic Boundary Value Problems}, Memoirs of the American Mathematical Society \textbf{483}, 1993

\bibitem{Gohberg} I. Gohberg, S. Goldberg, M. A. Kaashoek, \textbf{Classes of Linear Operators Vol. I}, Operator Theory: Advances and Applications Vol. \textbf{49}, Birkh\"{a}user, 1990

\bibitem{Hatcher2} A. Hatcher, {\bf Vector bundles and K-theory}. Preprint, 2017

\bibitem{Hurewicz} W. Hurewicz, H. Wallmann, \textbf{Dimension Theory}, Princeton Mathematical Series \textbf{4}, Princeton University Press, 1941

\bibitem{Hus} D. Husemoller, \textbf{Fibre Bundles, 3rd ed.}, Graduate Texts in Mathematics \textbf{20}, Springer-Verlag, 1993

\bibitem{HuP} X. Hu, A. Portaluri, \textbf{Bifurcation of heteroclinic orbits via an index theory}, Math. Z. \textbf{292}, 705--723, 2019

\bibitem{Jaenich} K. Jänich, \textbf{Vektorraumbündel und der Raum der Fredholmoperatoren}, Math. Ann. \textbf{161}, 1965, 129--142

\bibitem{JuWiggins2001} N.~Ju, S.~Wiggins, {\bf On Roughness of Exponential Dichotomy}, J. Math. Anal. Appl. \textbf{262}, no.~1, 2001, 39--49

\bibitem{Longo} I.P. Longo, C. P\"{o}tzsche, R. Skiba, \textbf{Global bifurcation of homoclinic solutions}, Journal of Differential Equations \textbf{437}, 2025, 113334

\bibitem{Park} E. Park, \textbf{Complex topological K-theory}, Cambridge Studies in Advanced Mathematics \textbf{111}, Cambridge University Press, Cambridge,  2008

\bibitem{Jacobo} J. Pejsachowicz, \textbf{K-theoretic methods in bifurcation theory}, Contemporary Math. \textbf{72},
193--205, 1988

\bibitem{JacoboAMS} J. Pejsachowicz, \textbf{Bifurcation of homoclinics}, Proc. Amer. Math. Soc. \textbf{136}, no. 1,  2008, 111--118

\bibitem{JacoboAMSII} J. Pejsachowicz, \textbf{Bifurcation of homoclinics of Hamiltonian systems}, Proc. Amer. Math. Soc. \textbf{136}, no. 6,  2008,
2055--2065

\bibitem{JacoboII} J.Pejsachowicz, \textbf{Topological invariants of bifurcation}, $C^{\ast}$-algebras and elliptic theory II,
Trends Math., Birkhuser, Basel, 2008, 239--250

\bibitem{JacoboTMNAI} J. Pejsachowicz, \textbf{Bifurcation of Fredholm maps I. The index bundle and bifurcation}, Topol. Methods Nonlinear Anal. \textbf{38},  2011, 115--168

\bibitem{JacoboRobertI} J. Pejsachowicz, R. Skiba,  \textbf{Global bifurcation of homoclinic trajectories of discrete dynamical systems}, Central European Journal of Mathematics, \textbf{10(6)}, 2012, 2088--2109

\bibitem{JacoboRobertII} J. Pejsachowicz, R. Skiba, \textbf{Topology and homoclinic trajectories of discrete dynamical systems}, Discrete and Continuous Dynamical Systems, Series S, \textbf{6(4)}, 2013, 1077--1094

\bibitem{PortW14} A. Portaluri, N. Waterstraat, \textbf{Bifurcation results for critical points of families of functionals}, Differential Integral Equations \textbf{27}, 2014, 369--386

\bibitem{Poetzsche-24} C. P\"{o}tzsche, R. Skiba, \textbf{Evans function, parity and nonautonomous bifurcations},
Proceedings of the Royal Society of Edinburgh: Section A Mathematics, Published online 2025, 1-40, doi:10.1017/prm.2025.10062

\bibitem{NilsRob} R. Skiba, N. Waterstraat, \textbf{The Index Bundle and Multiparameter Bifurcation for Discrete Dynamical Systems}, Discrete Contin. Dyn. Syst. \textbf{37}, No. 11, 2017, 5603--5629

\bibitem{RobNilsIndBundle} R. Skiba, N. Waterstraat, \textbf{The index bundle for selfadjoint Fredholm operators and multiparameter bifurcation for Hamiltonian systems}, Zeitschrift für Analysis und ihre Anwendungen \textbf{41}, No. $3/4$, 2022, 487-501

\bibitem{NilsRobII}  R. Skiba, N. Waterstraat, \textbf{Fredholm theory of families of discrete dynamical systems and its applications to bifurcation theory}, Discrete Contin. Dyn. Syst. \textbf{43}, No. 5, 2023, 1878--1904

\bibitem{MultBif} R. Skiba, N. Waterstraat, \textbf{Relative $K$-theoretic Methods in Bifurcation Theory}, in preparation

\bibitem{indbundleIch} N. Waterstraat, \textbf{The index bundle for Fredholm morphisms}, Rend. Sem. Mat. Univ. Politec. Torino \textbf{69}, 2011, 299--315

\bibitem{Wat15} N. Waterstraat, \textbf{A family index theorem for periodic Hamiltonian systems and bifurcation}, Calc. Var. Partial Differ. Equ. \textbf{52}, 2015, 727--753.

\bibitem{NilsGeod} N. Waterstraat, \textbf{A $K$-theoretic proof of the Morse index theorem in semi-Riemannian geometry}, Proc. Amer. Math. Soc. \textbf{140}, 2012, 337--349

\bibitem{NilsBif} N. Waterstraat, \textbf{A Remark on Bifurcation of Fredholm Maps}, Adv. Nonlinear Anal. \textbf{7}, 2018, 285--292

\bibitem{NilsLag} N. Waterstraat, \textbf{On the Fredholm Lagrangian Grassmannian, spectral flow and ODEs in Hilbert spaces}, J. Differential Equations \textbf{303}, 2021, 667--700

\bibitem{Z75} M.G. Zeidenberg, S.G. Krein, P.A. Kuchment, A.A. Pankov, \textbf{Banach bundles and linear operators}, Russian Math. Surveys \textbf{30}, 1975, 115--175

\vspace*{1cm}
\begin{minipage}{1.2\textwidth}
\begin{minipage}{0.4\textwidth}

Robert Skiba, Daniel Strzelecki\\
Faculty of Mathematics\\ and Computer Science\\
Nicolaus Copernicus University in Toru\'n\\
Chopina 12/18\\
87-100 Torun\\
Poland\\
E-mail: robert.skiba@mat.umk.pl\\
E-mail: daniel.strzelecki@mat.umk.pl
\end{minipage}
\hfill
\begin{minipage}{0.6\textwidth}

Nils Waterstraat\\
Martin-Luther-Universit\"at Halle-Wittenberg\\
Naturwissenschaftliche Fakult\"at II\\
Institut f\"ur Mathematik\\
06099 Halle (Saale)\\
Germany\\
nils.waterstraat@mathematik.uni-halle.de
\end{minipage}
\end{minipage}

\end{document}